\newtheorem{theorem}{Theorem}[section]
\newtheorem{proposition}[theorem]{Proposition}
\newtheorem{lemma}[theorem]{Lemma}
\newtheorem{remark}[theorem]{Remark}
\newtheorem{corollary}[theorem]{Corollary}
\newtheorem{definition}{Definition}
\newtheorem*{theorem*}{Theorem}
\numberwithin{equation}{section}
\title{\bf Asymptotic spreading of interacting species with multiple fronts II: Exponentially decaying initial data}
\author{Qian Liu$^{1,2}$, Shuang Liu$^{1,2}$ and King-Yeung Lam$^{2}$}
\date{\small
    $^1$Institute for Mathematical Sciences, Renmin University of China, Beijing, China\\%
    $^2$Department of Mathematics, The Ohio State University, Columbus, OH 43210, USA\\[2ex]%
    \today
}
\begin{document}

\maketitle

\begin{abstract}
This is part two of our study on the spreading properties of the Lotka-Volterra competition-diffusion systems with a stable coexistence state. We focus on the case when the initial data are exponential decaying. By establishing a comparison principle for Hamilton-Jacobi equations, we are able to apply the Hamilton-Jacobi approach for Fisher-KPP equation due to Freidlin, Evans and Souganidis. As a result, the exact formulas of spreading speeds and their dependence on initial data are derived. Our results indicate that sometimes the spreading speed of the slower species is nonlocally determined. Connections of our results with the traveling profile due to Tang and Fife, as well as the more recent spreading result of Girardin and Lam, will be discussed.
\end{abstract}

{\small
{\makeatletter
\def\@@underline#1{#1}
\def\@@overline#1{#1}
\tableofcontents
\makeatother}
}
\section{Introduction}

For monotone dynamical systems, the pioneering work of Weinberger et al. \cite{Weinberger_1982,Weinberger_2002b} (see also \cite{Lui_1989}) relates the spreading speed of the population to the minimal speed of (monostable) traveling wave solutions. Their result can be applied to the diffusive Lotka-Volterra competition system. Suitably non-dimensionalized, the system is given by
\begin{equation}\label{eq:1-1}
\left\{
\begin{array}{ll}
\partial_t u-\partial_{xx}u=u(1-u-av),& \text{ in }(0,\infty)\times \mathbb{R},\\
\partial _t v-d\partial_{xx}v=r v(1-bu-v),& \text{ in }(0,\infty)\times \mathbb{R},\\
u(0,x)=u_0(x), & \text { on }  \mathbb{R},\\
v(0,x)=v_0(x), & \text { on } \mathbb{R},
\end{array}
\right .
\end{equation}
with $a,b\in(0,1)$.  
 It is clear that \eqref{eq:1-1} admits  a trivial equilibrium $(0, 0)$, two semi-trivial equilibria $(1,0)$ and $(0,1)$, 
and further a linearly stable equilibrium $$(k_1,k_2)=\left(\frac{1-a}{1-ab},\frac{1-b}{1-ab}\right).$$ 
\begin{theorem}[Lewis et al.{\cite{Lewis_2002}}]\label{thm:LLW}
Let $(u,v)$
be the solution of \eqref{eq:1-1} with initial data
$$u(0,x)=\rho_1(x), \,\,\,\, v(0,x)=1-\rho_2(x), 
$$
where $0\leq \rho_i<1$ $(i=1,2)$ are compactly supported functions in $\mathbb{R}$. Then there exists some $c_{\rm LLW} \in [2\sqrt{1-a}, 2]$ such that
\begin{align*}
 \left\{
\begin{array}{ll}
\smallskip
\lim\limits_{t\rightarrow \infty}  \sup\limits_{|x|<c t} (|u(t,x)-k_1|+|v(t,x)-k_2|)=0 &\text{ for each } c<c_{\rm LLW},\\
\lim\limits_{t\rightarrow \infty}  \sup\limits_{|x|>c t} (|u(t,x)|+|v(t,x)-1|)=0 &\text{ for each }~ c>c_{\rm LLW}.
\end{array}
\right.
 \end{align*}
In this case, we say that $u$ spreads at speed $c_{\rm LLW}$.
\end{theorem}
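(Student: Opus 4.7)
The plan is to transform \eqref{eq:1-1} into a cooperative system via the standard substitution $w := 1 - v$. Under this change of variables the system becomes
\[
\partial_t u - \partial_{xx} u = u\bigl(1 - a - u + aw\bigr), \qquad \partial_t w - d \partial_{xx} w = r(1-w)(bu - w),
\]
and on the order interval $\{0 \le u,\ 0 \le w \le 1\}$ the off-diagonal reaction partials $au$ and $rb(1-w)$ are both nonnegative, so the system is quasimonotone. The compactly supported initial data $(\rho_1, \rho_2)$ lies in $[0,1)^2$; the semi-trivial state $(u,v) = (0,1)$ corresponds to the trivial equilibrium $(0,0)$ of the new system, while the coexistence state $(k_1,k_2)$ corresponds to $(k_1, 1-k_2)$. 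A direct enumeration of the four steady states shows that these are the only equilibria in the order interval $[(0,0),\, (k_1, 1-k_2)]$; the linearization at $(0,0)$ has positive eigenvalue $1-a$ while $(k_1, 1-k_2)$ is stable by hypothesis, giving a monostable structure between them.

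With this cooperative reformulation in hand, I would invoke the spreading speed theory for monotone semiflows due to Weinberger \cite{Weinberger_1982, Weinberger_2002b} and Lui \cite{Lui_1989}. For compactly supported initial data sandwiched between the two ordered equilibria, this machinery produces a single spreading speed $c_{\rm LLW}$ governing the invasion from $(0,0)$ to $(k_1, 1-k_2)$ in the transformed variables, which in the original variables is precisely the transition from $(0,1)$ to $(k_1, k_2)$ asserted in the theorem.

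The remaining task is to establish the bounds $c_{\rm LLW} \in [2\sqrt{1-a},\, 2]$. The upper bound follows from a simple comparison: since $u, v \ge 0$ and $a > 0$, one has $u(1 - u - av) \le u(1 - u)$, so by the parabolic maximum principle $u$ is dominated by the solution of the scalar Fisher--KPP equation with the same initial datum, whose spreading speed is $2$. For the lower bound, I would linearize the $u$-equation at $v = 1$ to obtain $\partial_t u - \partial_{xx} u = (1 - a) u$ and construct compactly supported exponentially decaying subsolutions invading at speed $2\sqrt{1-a}$.

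The main obstacle lies in rigorously justifying the lower bound: one must show that $v$ stays genuinely close to $1$ on a sufficiently large region surrounding the leading edge of $u$, so that the linearization at $(0,1)$ faithfully lower-bounds the true nonlinearity there. This is where the cooperative structure after the change of variables is essential, as it permits Weinberger's variational characterization of $c_{\rm LLW}$ in terms of the linearization at the unstable equilibrium $(0,0)$ (equivalently, $(0,1)$ in the original variables), and thereby delivers the lower bound $2\sqrt{1-a}$.
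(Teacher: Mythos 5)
The paper does not prove Theorem~\ref{thm:LLW}; it is imported verbatim from Lewis, Li and Weinberger \cite{Lewis_2002}, so there is no in-paper proof to compare against. That said, your proposal correctly reconstructs the strategy of that reference: the substitution $w=1-v$ converts \eqref{eq:1-1} into a cooperative (quasimonotone) system on the order interval between $(0,0)$ and $(k_1,1-k_2)$, and the Weinberger--Lui machinery for monotone semiflows then yields a single spreading speed governing the transition, with the speed estimates obtained by comparison.

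One refinement worth flagging: you describe the ``main obstacle'' for the lower bound as showing that $v$ stays genuinely close to $1$ near the leading edge. In fact this is not needed for the bound $c_{\rm LLW}\ge 2\sqrt{1-a}$. Since $v_0\le 1$ and the $v$-equation satisfies $rv(1-bu-v)\le 0$ on $\{v=1\}$, the maximum principle gives $v\le 1$ for all time; hence
\[
\partial_t u-\partial_{xx}u = u(1-u-av)\ \ge\ u\bigl((1-a)-u\bigr),
\]
so $u$ is a supersolution of the scalar Fisher--KPP equation with intrinsic rate $1-a$, whose spreading speed is exactly $2\sqrt{1-a}$. The crude comparison $v\le 1$ (rather than $v\approx1$) already delivers persistence of $u$ in $\{|x|<ct\}$ for every $c<2\sqrt{1-a}$, and the upgrade from ``$u$ bounded below'' to full convergence $(u,v)\to(k_1,k_2)$ is then an entire-solution argument of the type appearing in Lemma~\ref{lem:entire1} (using that $(k_1,1-k_2)$ is the unique nontrivial equilibrium in the relevant order interval, as you verify). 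Where the cooperative structure and Weinberger's variational characterization are genuinely needed is for the \emph{existence} of a single sharp speed $c_{\rm LLW}$ and for the companion upper bound via linear determinacy --- not for the crude two-sided estimate itself, whose upper half $c_{\rm LLW}\le 2$ also follows from the elementary scalar comparison $u(1-u-av)\le u(1-u)$, as you note.
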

\begin{remark}\label{rmk:LLW}
If the initial data $(u,v)(0,x)$ is a compact perturbation of $(1,0)$, then there exists $\tilde{c}_{\rm LLW} \in [2\sqrt{dr(1-b)}, 2\sqrt{dr}]$ such that the species $v$ spreads at speed $\tilde{c}_{\rm LLW}$.
\end{remark}
It is shown in \cite{Li_2005, Liang_2007} that the spreading speed $c_{\rm LLW}$ (resp. $\tilde{c}_{\rm LLW}$) is identical to the minimum wave speed of traveling wave solution connecting the pair of equilibria $(k_1,k_2)$ and $(0,1)$ (resp. $(1,0)$). It is crucial for the theory that the pair of equilibria forms an ordered pair of equilibria (regarding the comparability of steady states in the theory of monotone semi-flows, see \cite{Smith_1995}).

For the weak competitive diffusive system \eqref{eq:1-1}, Tang and Fife \cite{Tang_1980} proved an additional class of traveling wave solutions connecting the positive equilibrium $(k_1,k_2)$ with the trivial equilibrium $(0,0)$. In this case, the equilibria $(0,0)$ and $(k_1,k_2)$ are un-ordered, and hence the existence of traveling wave, due to Tang and Fife \cite{Tang_1980}, does not directly follow from the monotone dynamical systems framework due to Weinberger et al. \cite{Weinberger_2002,Weinberger_2002b} (see also \cite{Fang_2014,Liang_2007}).

A natural question is whether the speed traveling wave solutions due to Tang and Fife, which connect $(k_1,k_2)$ to $(0,0)$, determine the spreading speed of the populations in the Cauchy problem  \eqref{eq:1-1}, provided the initial data $(u_0,v_0)$ has the same asymptotics at $x=\pm \infty$ as the traveling wave solution? What happens for more general exponentially decaying initial data? Does the two species spread with different speeds?

In this paper, we continue our investigation in \cite{LLL2019} on the spreading properties of solutions of the Cauchy problem \eqref{eq:1-1}.
We are interested in determining the spreading speeds of each of the populations $u$ and $v$,  for the class of initial data $(u_0,v_0)$ satisfying $(u_0,v_0)(-\infty) = (1,0)$, $(u_0,v_0)(\infty) = (0,0)$ and such that $u_0 \to 0$ exponentially at $\infty$ with rate $\lambda_u>0$; $v_0 \to 0$ decays exponentially at $ \infty$ (resp. $-\infty$) with rate $\lambda^{+}_v>0$ (resp. $\lambda^-_v>0$).

We introduce the Hamilton-Jacobi approach to study the spreading of two-interacting species into an open habitat, and resolve a conjecture by Shigesada \cite[Ch. 7]{Shigesada_1997}. Inspired by the pioneering work of Freidlin \cite{Freidlin_1985} and of Evans and Souganidis \cite{Evans_1989} on the Fisher-KPP equation, we shall derive, via the thin-front limit, a couple of Hamilton-Jacobi equations for which solutions have to be understood in the viscosity sense. In our previous work \cite{LLL2019}, we considered the Cauchy problem \eqref{eq:1-1} endowed with compactly supported initial data, and used the dynamics programming approach to show the uniqueness of the limiting Hamilton-Jacobi equations, and to evaluate the solution by determining the path that minimizes certain action functional.  In contrast to our previous paper, we will tackle the Cauchy problem with exponentially decaying initial data using entirely PDE arguments. For this purpose, we establish a general comparison principle for discontinuous viscosity solutions associated with piecewise Lipschitz Hamiltonians, the latter arising naturally in the spreading of multiple species. The proof of the comparison result is based on combining the ideas due to Ishii \cite{Ishii_1997} and Tourin \cite{Tourin_1992}. With this comparison principle at our disposal, we are able to obtain large-deviation type estimates of the solutions $(u,v)$ to the Cauchy problem  \eqref{eq:1-1} by
 explicit construction of simple piecewise linear super- and sub-solutions.

\subsection{Known results of a single population}
We first recall some classical asymptotic spreading results concerning the single Fisher-KPP equation:
%
\begin{equation}\label{eq:single}
\left\{
\begin{array}{ll}
\medskip
\partial _t  \phi-\tilde{d}\partial_{xx}\phi=\tilde{r}\phi(1 - \phi),&  \text{ in }(0,\infty)\times \mathbb{R},\\
\phi(0,x)=\phi_0(x), & \text{ on } \mathbb{R},\\
\end{array}
\right .
\end{equation}
where $\tilde{d},\tilde{r}$ are positive constants. If the initial data is a Heaviside function,  supported on $(-\infty,0]$, 
   it is shown \cite{Aronson_1975,Fisher_1937,Kolmogorov_1937} that the population, whose density is given by $\phi(t,x)$ has the spreading speed $c^* = 2\sqrt{\tilde{d} \tilde{r}}$, i.e., 
\begin{equation*}
\left\{
\begin{array}{ll}
\medskip
\lim\limits_{t\rightarrow \infty}  \sup\limits_{x<c t} |\phi(t,x)-1|=0 &\text{ for all } c<c^*,\\
\lim\limits_{t\rightarrow \infty}  \sup\limits_{x>c t} |\phi(t,x)|=0 &\text{ for all } c>c^*.\\
\end{array}
\right .
\end{equation*}
In addition, the spreading speed $c^*$ coincides with the minimal speed of the traveling wave solutions to \eqref{eq:single} in this case. If we broaden the scope of initial data $\phi_0$ to include the class of exponentially decaying data, then
the asymptotic behavior of the solution to \eqref{eq:single} is sensitive to the rate of decay of $\phi_0$ at $x=\pm\infty$ (see e.g. \cite[pp.42]{Frank_1975}), which is the {leading edge} of the front. This is related to the fact that
$0$ is a saddle for \eqref{eq:single}, see \cite{Kametaka_1976,Mckean_1975,Booty_1993,Ebert_2000,Saarloos_2003}.


Precisely, denoting $\lambda^*=\sqrt{{\tilde{r}}/{\tilde{d}}}$. It is proved \cite{Kametaka_1976, Mckean_1975} that:
\begin{itemize}
             \item [\rm{(i)}] When the initial data $\phi_0(x)$ decays faster than $\exp\{-\lambda^*x\}$  at $x=\infty$, then the spreading speed $c^*=2\sqrt{\tilde{d}\tilde{r}}$;  
             \item [\rm{(ii)}] When the initial data $\phi_0(x)$ is the form of $\exp\{-(\lambda+o(1)) x\}$  at $x=\infty$ with $\lambda<\lambda^*$, then the population has the spreading speed $c(\lambda) = \tilde{d}\lambda + \frac{\tilde{r}}{\lambda}$ which is strictly greater than $2\sqrt{\tilde{d}\tilde{r}}$. 
 \end{itemize}

For recent developments in asymptotic spreading of a single population in heterogeneous environments, we refer to \cite{Berestycki_2012,Berestycki_2018, Fang_2016} for the  one-dimensional case, and to \cite{Berestycki_2008,BNpreprint,Nolen_2008,Weinberger_2002} for higher-dimensional case.

\subsection{Known results of multiple populations}

For close to three decades, researchers have been trying to extend these results to reaction-diffusion systems describing two or more interacting populations.
%

Motivated by the northward spreading of several tree species into the newly de-glaciated North American continent at the end of the last ice age, Shigesada et al. \cite[Ch. 7]{Shigesada_1997} formulated the question of spreading of two or more competing species into an open habitat, i.e., one that is unoccupied by either species. In case of two competing species, it is conjectured that for large time, the solution behaves like stacked traveling fronts, i.e., it exhibits two transition layers moving at two different speeds $c_1 > c_2$, connecting three homogeneous equilibrium states $(0,0)$, $E_1$ and $E_2$. Here $E_1$ is the semi-trivial equilibrium where the faster species is present, and $E_2$ is either the other semi-trivial equilibrium or the coexistence equilibrium (if the latter exists). While it is not difficult to see that the spreading speed $c_1$ of the faster species can be predicted by the underlying single equation (since the slower species is essentially absent at the leading edge of the front), the determination of the second speed remained open over a decade.
Lin and Li \cite{Lin_2012} first worked on the spreading properties of \eqref{eq:1-1} in the weak competition case $0<a,b<1$ with compactly supported initial condition $(u_0,v_0)$ and obtained estimates for the spreading speed $c_2$ of the slower species. For the strong competition case $a,b>1$, Carr\`{e}re  \cite{Carrere_2018} determined both of the spreading speeds, where $c_2$ is determined by the unique speed of traveling wave solutions connecting the semi-trivial steady state $(1,0)$ and $(0,1)$. The predator-prey system was considered by Ducrot et al. \cite{Ducrot_preprint}. For cooperative systems with equal diffusion coefficients, the existence of stacked fronts for cooperative systems was also studied by \cite{Iida_2011}. In these cases, the spreading speeds of each individual species can be determined locally and is not influenced by the presence of other invasion fronts.

However, the second speed $c_2$ can in general be influenced by the first front with speed $c_1$, as demonstrated by the work of Holzer and Scheel \cite{Holzer_2014} which applies in particular to \eqref{eq:1-1} for the case $a=0$ and $b>0$. They showed that the second speed $c_2$ can be determined by the linear instability of the zero solution of a single equation with space-time inhomogeneous coefficient. For coupled systems, the case $0 < a < 1 < b$ was treated in a recently appeared paper of Girardin and the third author \cite{Girardin_2018}. By deriving an explicit formula for $c_2$, it is observed that $c_2$ can sometimes be strictly greater than the minimal speed of traveling wave connecting $E_1$ and $E_2$, and that it depends on the first speed $c_1$ in a non-increasing manner. The proof in \cite{Girardin_2018} is based on a delicate construction of (piecewise smooth) super- and sub-solutions for the parabolic system. In our previous paper \cite{LLL2019}, we showed that in the weak competition case $0< a,b<1$ the formula for $c_2$ is exactly the same as the one in \cite{Girardin_2018} but with a novel strategy of proof based on obtaining large deviation estimates via analyzing the Hamilton-Jacobi equations obtained in the thin-front limit. We also mention that coupled parabolic systems were also treated in \cite{Evans_1989b,Freidlin_1991} based on the large deviations approach, but in these papers all components spread with a single spreading speed.

\subsection{Main results}
In this paper, we study the spreading of two competing species into an open habitat with exponentially decaying (in space) initial data, with attention to how the spreading speeds are influenced by the exponential rates of decay at infinity.

For a function $g: \mathbb{R} \to \mathbb{R}$ and $\lambda \in \mathbb{R}$, we say that $g(x) \sim e^{-\lambda x}$ at $\infty$ if
$$
0 < \liminf_{x \to \infty} e^{\lambda x}g(x) \leq \limsup_{x \to \infty} e^{\lambda x}g(x) <\infty.
$$
Definition for $g(x) \sim e^{\lambda x}$ at $-\infty$ is similar. We now state our hypothesis for the initial data $(u_0,v_0)$.

$$
\rm{(H_\lambda)}\begin{cases} \text{The initial value } (u_0, v_0)\in C(\mathbb{R};[0,1])^2 \text{ is strictly positive on }\mathbb{R},\\
\text{ and there exist positive constants } \theta_0,\lambda_u, \lambda^+_v, \lambda^-_v \text{ such that } \\
u_0(x) \geq \theta_0\quad \text{ in }(-\infty,0], \quad u_0(x) \sim e^{-\lambda_u x} \,\, \text{ at }\,\,\infty,\\
v_0(x) \sim e^{\lambda_v^- x} \,\,\text{ at }\,\, -\infty,\quad \text{ and }\quad v_0(x) \sim e^{-\lambda_v^+ x} \,\, \text{ at }\,\, \infty.
\end{cases}.
$$

We denote
 \begin{align}\label{eq:sigma}
 \begin{cases}\displaystyle
 \medskip
\sigma_1=d(\lambda_v^+\wedge \sqrt{\frac{r}{d}})+\frac{r}{\lambda_v^+\wedge\sqrt{\frac{r}{d}}},\quad \sigma_2=(\lambda_u\wedge 1)+\frac{1}{\lambda_u\wedge 1},\\
\displaystyle \sigma_3=d(\lambda_v^-\wedge \sqrt{\frac{r(1-b)}{d}})+\frac{r(1-b)}{\lambda_v^-\wedge\sqrt{\frac{r(1-b)}{d}}},
\end{cases}
\end{align}
where $a\wedge b = \min\{a,b\}$ for $a,b \in \mathbb{R}$. Here the quantity $\sigma_1$ (resp. $\sigma_2$) denotes the spreading speed of $v$ (resp. $u$) in the absence of the competitor
\cite{Kametaka_1976,Mckean_1975}. Without loss of generality, we assume  $\sigma_1\geq \sigma_2 $ throughout this paper. This amounts to fixing the choice of $v$ to be the faster spreading species.

Our main result is stated as follows.

\begin{theorem}\label{thm:1-2}
Assume $\sigma_1 > \sigma_2$. Let $(u,v)$ be the solution of \eqref{eq:1-1} such that the initial data satisfies $\mathrm{(H_\lambda)}$. Then
 there exist $c_1, c_2, c_3\in \mathbb{R}$ such that $c_3 < 0 < c_2 < c_1$, and
%
for each small $\eta>0$, the following spreading results hold:
\begin{equation}
\begin{cases}
\lim\limits_{t\rightarrow \infty} \sup\limits_{ x>(c_{1}+\eta) t} (|u(t,x)|+|v(t,x)|)=0, \\
\lim\limits_{t\rightarrow \infty} \sup\limits_{(c_2+\eta) t< x<(c_{1}-\eta) t} (|u(t,x)|+|v(t,x)-1|)=0, \\
\lim\limits_{t\rightarrow \infty} \sup\limits_{(c_{3}+\eta)t< x<(c_2-\eta) t} (|u(t,x)-k_1|+|v(t,x)-k_2|)=0 , \\
 \lim\limits_{t\rightarrow \infty} \sup\limits_{x<(c_{3}-\eta)t} (|u(t,x)-1|+|v(t,x)|)=0. 
 \end{cases} \label{eq:spreadingly}
\end{equation}
Precisely, the spreading speeds $c_3 < 0 < c_2 < c_1$ can be determined as follows:
\begin{equation}\label{eq:c123}
c_1 = \sigma_1, \quad c_2 = \max\{c_{\rm LLW}, \hat{c}_{\rm nlp}\},\quad c_3 = - \max\{ \tilde{c}_{\rm LLW}, \sigma_3\},
\end{equation}
where $c_{\rm LLW}$ $($resp. $\tilde{c}_{\rm LLW})$ is 
given in Theorem \ref{thm:LLW} $($resp. Remark \ref{rmk:LLW}$)$,
and
\begin{equation}\label{eq:hcacc}
\hat{c}_{\rm nlp} = \begin{cases}
\frac{\sigma_1}{2} - \sqrt{a} + \frac{1-a}{\frac{\sigma_1}{2} - \sqrt{a}}, & \text{ if }\sigma_1 < 2\lambda_u\,\, \text{ and }\,\, \sigma_1 \leq 2 (\sqrt a + \sqrt{1-a}),\\
\tilde\lambda_{\rm nlp} + \frac{1-a}{\tilde\lambda_{\rm nlp}}, & \text{ if }\sigma_1  \geq 2\lambda_u \,\,\text{ and }\,\, \tilde\lambda_{\rm nlp} \leq \sqrt{1-a},\\
2\sqrt{1-a}, &\text{ otherwise,}
\end{cases}
\end{equation}
with the quantity $\tilde\lambda_{\rm nlp}$ being given by
\begin{equation}\label{eq:lambdaacc}
\tilde\lambda_{\rm nlp} = \frac{1}{2}\left[\sigma_1 - \sqrt{(\sigma_1-2\lambda_u)^2 + 4a}\right].
\end{equation}
\end{theorem}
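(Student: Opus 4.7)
The plan is to adapt the Hamilton-Jacobi thin-front approach of Freidlin and Evans-Souganidis, as used in our previous paper \cite{LLL2019}, to the exponentially-decaying-data setting. Rescale space-time by $(t,x)\mapsto(t/\epsilon,x/\epsilon)$, set $U^\epsilon:=-\epsilon\ln u^\epsilon$ and $V^\epsilon:=-\epsilon\ln v^\epsilon$, and pass $\epsilon\to 0$ via half-relaxed limits. Formally, on the leading edge (where $u,v\to 0$), $U$ is a viscosity solution of the constrained problem $\min\{U,\, U_t+|U_x|^2+1-a\mathbf{1}_{\{V=0\}}\}=0$, while $V$ satisfies an analogous equation whose reaction jumps between $r$, $r(1-b)$, and $0$ depending on which of $\{U=0\}$, $\{V=0\}$ is active. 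The initial traces forced by $(H_\lambda)$ are $U(0,x)=\lambda_u x^+$ and $V(0,x)=\lambda_v^- x^- + \lambda_v^+ x^+$, and the three spreading speeds appear as the slopes of the free boundaries $\partial\{U>0\}$ and $\partial\{V>0\}$.

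Once this picture is set up, the three speeds can be computed sequentially. For $c_1$: ahead of every front the $V$-equation decouples to the single-population Hamilton-Jacobi problem $V_t+d|V_x|^2+r=0$ with initial trace $\lambda_v^+ x^+$, and the classical Hopf-Lax computation yields a free-boundary speed equal to $\sigma_1$. For $c_2$: in the strip behind the $v$-front one has $V\equiv 0$, so $U$ satisfies $U_t+|U_x|^2+(1-a)=0$ with two competing boundary sources, namely the prescribed trace of slope $\lambda_u$ at $t=0$ and the trace induced along the moving interface $\{x=\sigma_1 t\}$ by the first front. Solving by Hopf-Lax produces the minimizing slope $\tilde\lambda_{\rm nlp}$ of \eqref{eq:lambdaacc} together with the three-case formula \eqref{eq:hcacc}; taking the maximum with the pushed LLW speed $c_{\rm LLW}$ (which corresponds to the ordered-front regime between $(0,1)$ and $(k_1,k_2)$) yields $c_2$. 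Speed $c_3$ is obtained by a symmetric argument on the left, using the trace $\lambda_v^- x^-$ and the traveling-wave speed $\tilde c_{\rm LLW}$ between $(k_1,k_2)$ and $(1,0)$, to produce $c_3=-\max\{\tilde c_{\rm LLW},\sigma_3\}$.

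To make this rigorous, each heuristically identified speed must be confirmed by constructing explicit piecewise-linear super- and sub-solutions of the relevant Hamilton-Jacobi equations, pulling them back through $u^\epsilon=e^{-U^\epsilon/\epsilon}$ and $v^\epsilon=e^{-V^\epsilon/\epsilon}$ to obtain super- and sub-solutions of \eqref{eq:1-1}, and then invoking a comparison principle; hypothesis $(H_\lambda)$ is used precisely to match the slopes of these test functions to the initial data. The main obstacle, as flagged in the introduction, is the comparison step itself: the effective Hamiltonian in the $U$-equation jumps from $|p|^2+1$ to $|p|^2+(1-a)$ across the interface $\{x=\sigma_1 t\}$, so classical viscosity comparison does not apply. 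Overcoming this requires the piecewise-Lipschitz comparison principle announced in the introduction, obtained by combining the doubling-of-variables technique of Ishii \cite{Ishii_1997} with the interface analysis of Tourin \cite{Tourin_1992}. Sharpness of $\hat c_{\rm nlp}$ then follows by matching the slopes of sub- and super-solutions across $\{x=\sigma_1 t\}$ exactly; the three-case split in \eqref{eq:hcacc} corresponds precisely to whether the optimizing slope $\tilde\lambda_{\rm nlp}$ lies below $\lambda_u$, between $\lambda_u$ and $\sqrt{1-a}$, or above $\sqrt{1-a}$, i.e., to whether the slow front is pulled by the initial data, by the faster front, or by the intrinsic KPP invasion.
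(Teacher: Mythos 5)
Your proposal correctly identifies the overall Hamilton--Jacobi architecture: WKB rescaling, half-relaxed limits, explicit piecewise-linear super-/sub-solutions, and the need for a comparison theorem that tolerates the jump in the Hamiltonian across $\{x=\sigma_1 t\}$ (which is indeed what Theorem~\ref{thm:D} supplies). The computations of $c_1$ and $c_3$, and the lower bound $\underline{c}_2 \geq \hat{c}_{\rm nlp}$, can all be carried out along the lines you sketch, matching the treatment in Sections~\ref{S3}--\ref{S4}.

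However, there is a genuine gap in how you propose to close the upper bound $\overline{c}_2 \leq \max\{c_{\rm LLW},\hat{c}_{\rm nlp}\}$, and the HJ machinery by itself does not do it. The viscosity sub-solution $\underline{w}_2$ (see~\eqref{eq:underw2}) used to bound $w_{2,*}$ from below vanishes on a set that is strictly \emph{larger} than $\{x \leq c_2 t\}$: for instance, $\underline{w}_2(t,x)=0$ for all $x<2t$ when $\lambda_u>1$, whereas $c_2 \leq 2$. Consequently, $w_{2,*}\geq\underline{w}_2$ gives you a nontrivial large-deviation estimate $u(t,\hat{c}t)\leq \exp(-(\hat{\mu}_{\hat c}+o(1))t)$ \emph{only} for $\hat{c}$ close to $\sigma_1$, and tells you nothing in the critical intermediate range $(c_2+\eta)t < x < 2t$. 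Simply ``pulling back through $u^\epsilon=e^{-U^\epsilon/\epsilon}$'' does not produce a super-solution of the parabolic system on that intermediate region, because the HJ limit carries no information there. The paper closes this gap by a separate parabolic argument: it treats the system restricted to the moving domain $\{0 \leq x \leq \hat{c}t\}$ with boundary data converging to $(k_1,k_2)$ at $x=0$ and $(0,1)$ at $x=\hat{c}t$, and invokes Lemma~\ref{lem:appen1} (imported from \cite{LLL2019}), which converts the exponential decay rate $\hat{\mu}_{\hat{c}}$ at the front, together with $\lambda_{\rm LLW}$, into the claimed upper bound $c_{\hat{c},\hat{\mu}}$ via a comparison with traveling-wave-type profiles. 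Taking $\hat{c}\nearrow\sigma_1$ then yields $\overline{c}_2\leq\max\{c_{\rm LLW},\hat{c}_{\rm nlp}\}$. Your proposal needs to incorporate a step of this kind — a parabolic lemma bridging the large-deviation front estimate to the bulk spreading speed — and should make clear that the HJ comparison alone is not sufficient on the upper-bound side for $c_2$. (The analogous step is also needed for $\underline c_3$.) A secondary but real subtlety you gloss over: the indicator $\mathbf{1}_{\{V=0\}}$ in your limiting Hamiltonian takes two different forms depending on whether you take liminf or limsup of $v^\epsilon$, giving $a\chi_{\{x\leq\sigma_1 t\}}$ in the sub-solution equation \eqref{eq:sub-solutionw2} but $a\chi_{\{\sigma_2 t < x<\sigma_1 t\}}$ in the super-solution equation \eqref{eq:supsolutionw2}; these must be tracked separately.
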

To visualize the spreading result \eqref{eq:spreadingly} visually, we consider the scaling
$$(\hat u,\hat v)(t,x)=\lim_{\epsilon\to0}(u,v)\left(\frac{t}{\epsilon},\frac{x}{\epsilon}\right) \quad \text{ for } (t,x)\in (0,\infty)\times \mathbb{R},
$$
whose asymptotic behaviors can be given in Figure \ref{figure5}.
\begin{figure}[http!!]
  \centering
\includegraphics[height=1.69in]{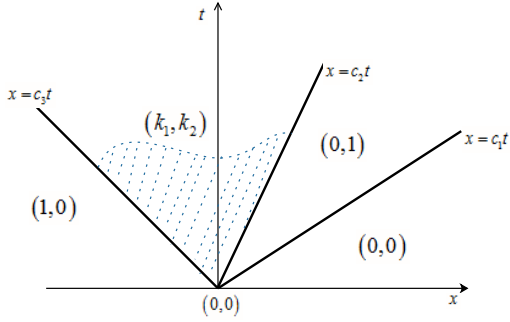}
  \caption{The asymptotic behaviors of  $(\hat u,\hat v)$.}\label{figure5}
  \end{figure}

Note that while the spreading speed $c_1$ of the faster species $v$ is entirely determined by $\lambda_v^+$ (the exponential  decay  of $v_0$ at $x \approx \infty$), and is unaffected by the slower species $u$, the corresponding speed $c_2$ of species $u$ depends upon  $\sigma_1$ and $\lambda_u$ (the exponential  decay  of $u_0$ at $x \approx \infty$). In particular, when $\lambda_v^+\geq\sqrt{\frac{r}{d}}$ and $\lambda_u> \frac{\sigma_1}{2}$, i.e., $v_0(x)$ and $u_0(x)$ decay fast enough, the speeds $c_1$ and $c_2$ are the same as that of the case of compactly supported initial data (see \cite[Theorem 1.2]{LLL2019}).

\begin{remark}
We point out that the speed $c_2$ in Theorem \ref{thm:1-2} is non-increasing in both $\sigma_1$ and $\lambda_u$, which follows from the following observations: {\rm(i)} $\tilde\lambda_{\rm nlp}$ given by \eqref{eq:lambdaacc} is non-decreasing in both $\sigma_1$ and $\lambda_u$; {\rm(ii)}  $s + \frac{1-a}{s}$ is non-increasing in $(0,\sqrt{1-a}]$.
This fact makes intuitive sense: {\rm{(i)}} a higher $\sigma_1$ means the region dominated by species $v$, which is roughly $\{(t,x): c_2t < x < \sigma_1 t\}$, is larger and thus rendering it more difficult for species $u$ to invade;  {\rm{(ii)}} a higher $\lambda_u$ means there are less population at the front to pull the invasion wave, which also makes it difficult for species $u$ to invade.
\end{remark}

Fix $\sigma_1,\, \lambda_u >0$ and $0 <a <1$, such that $\sigma_1 >\sigma_2$ holds. We shall see that the quantity $\hat{c}_{\rm nlp}$ in \eqref{eq:hcacc} can be equivalently defined by
$$
\{(t,x): \overline w_2 (t,x) =0\} = \{ (t,x): t>0 \, \text{ and }\, x \leq  \hat{c}_{\rm nlp} t\},
$$
where $\overline w_2(t,x)$ is the unique viscosity solution of the Hamilton-Jacobi equation
\begin{equation}\label{eq:hj_j11}
\left\{\begin{array}{ll}
\medskip
\min\{\partial_t w + |\partial_x w|^2 + 1 - a \chi_{\{x < \sigma_1 t\}},w\} = 0, & \text{ in } (0,\infty)\times \mathbb{R},\\
w(0,x) = \lambda_u \max\{x, 0\}, &\text{ on }\mathbb{R}.
\end{array}
\right.
\end{equation}
Here $\chi_{S}$ is the indicator function of the set $S\in (0,\infty)\times \mathbb{R}$.

A further point of interest is the involvement of $(0,0)$ and $(k_1,k_2)$ in co-invasion process of \eqref{eq:1-1}, which happens only in the weak competition case $0<a,b<1$. In this case, the equilibrium states $(0,0)$ and $(k_1,k_2)$ are un-ordered, and hence the existence of traveling wave, due to Tang and Fife \cite{Tang_1980}, cannot be established by monotone dynamical systems framework due to Weinberger et al. \cite{Weinberger_2002b} (see also \cite{Fang_2014,Liang_2007}). We will see that the invasion front $(k_1,k_2)$ into $(0,0)$ is indeed realized in \eqref{eq:1-1} for initial data with certain values of exponential decay rates $\lambda_u, \lambda^{+}_v$ at infinity,
 namely, when $\sigma_1=\sigma_2$.


\begin{theorem}\label{thm:1-2b} Assume $\sigma_1 = \sigma_2$.
Let $(u,v)$ be the solution of \eqref{eq:1-1} such that the initial data satisfies
 $\mathrm{(H_\lambda)}$.  Then
%
for each small $\eta>0$, it holds that
\begin{equation}
\begin{cases}
\lim\limits_{t\rightarrow \infty} \sup\limits_{ x>(\sigma_{1}+\eta) t} (|u(t,x)|+|v(t,x)|)=0, \\
\lim\limits_{t\rightarrow \infty} \sup\limits_{(c_{3}+\eta)t< x<(\sigma_1-\eta) t} (|u(t,x)-k_1|+|v(t,x)-k_2|)=0 , \\
 \lim\limits_{t\rightarrow \infty} \sup\limits_{x<(c_{3}-\eta)t} (|u(t,x)-1|+|v(t,x)|)=0, 
 \end{cases} \label{eq:spreadingly'}
\end{equation}
where $c_3 = - \max\{ \tilde{c}_{\rm LLW}, \sigma_3\}$ and that
$\tilde{c}_{\rm LLW}$ is 
given in Remark \ref{rmk:LLW}.
\end{theorem}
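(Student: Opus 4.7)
The plan is to view Theorem \ref{thm:1-2b} as the degenerate case of Theorem \ref{thm:1-2} in which the two leading speeds collapse to the common value $\sigma_1=\sigma_2$: the intermediate plateau on which $(u,v)\approx(0,1)$ disappears, and a single Tang--Fife-type co-invasion front directly separates $(0,0)$ from $(k_1,k_2)$. I would prove the three estimates of \eqref{eq:spreadingly'} by adapting the Hamilton--Jacobi strategy already developed for Theorem \ref{thm:1-2}, with the key point being that the formulae \eqref{eq:c123}--\eqref{eq:hcacc} should degenerate to $c_2=\sigma_1$ precisely when $\sigma_1=\sigma_2$.

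The outer upper bound and the rear front are the easy parts. For $x>(\sigma_1+\eta)t$, drop the competitive coupling: $u$ is a sub-solution of the single Fisher--KPP equation with parameters $(\tilde d,\tilde r)=(1,1)$ and $v$ of the single equation with $(\tilde d,\tilde r)=(d,r)$. Hypothesis $\mathrm{(H_\lambda)}$ fixes the exponential decay rates $\lambda_u$ and $\lambda_v^+$ at $+\infty$, so the Kametaka--McKean estimates recalled in Section 1.1 bound the spreading speeds of the two single equations by $\sigma_2$ and $\sigma_1$ respectively; since they coincide, both components vanish uniformly on this half-plane. For $x<(c_3-\eta)t$, one imports verbatim the treatment of the back front from the proof of Theorem \ref{thm:1-2}: neither Theorem \ref{thm:LLW}, Remark \ref{rmk:LLW}, nor the linearized KPP estimate at $(1,0)$ that produces the $\sigma_3$ bound uses the strict inequality $\sigma_1>\sigma_2$, so both can be transferred unchanged.

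The heart of the matter is the middle region $(c_3+\eta)t<x<(\sigma_1-\eta)t$, where $(u,v)$ must converge to $(k_1,k_2)$, and this is the main obstacle: the upper boundary of this region moves at the same speed as the $v$-front, so $u$ has no buffer zone in which to build up before $v$ pushes it. My plan is to apply the thin-front rescaling $w^\epsilon(t,x)=-\epsilon\log u(t/\epsilon,x/\epsilon)$ and $z^\epsilon(t,x)=-\epsilon\log v(t/\epsilon,x/\epsilon)$ and, using the piecewise-Lipschitz comparison principle set up in the introduction, pass to the half-relaxed limit. One first identifies the zero set of the limit for $v$ as $\{x\le\sigma_1 t\}$; one then analyzes a Hamilton--Jacobi equation of the form \eqref{eq:hj_j11} for the $u$-component, with the reaction coefficient jumping across $x=\sigma_1 t$, and verifies that its zero set is again $\{x\le\sigma_1 t\}$ precisely when $\sigma_1=\sigma_2$. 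This yields positive lower bounds $\liminf_{t\to\infty}u(t,ct)>0$ and similarly for $v$ on every cone $c_3<c<\sigma_1$. A standard squeezing argument using the linear stability of $(k_1,k_2)$ in the spatially homogeneous Lotka--Volterra ODE then upgrades these lower bounds to the convergence claimed in \eqref{eq:spreadingly'}.

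The principal technical difficulty I anticipate is the construction of compatible parabolic sub-solutions at the merged front. The piecewise-linear sub-solutions used for Theorem \ref{thm:1-2} rely on a $(0,1)$-plateau of strictly positive width on which to anchor the $u$-equation; when $\sigma_1=\sigma_2$ this plateau vanishes and the construction must be reworked. I would address this either by obtaining sub-solutions with plateau-width-uniform estimates in the $\sigma_1>\sigma_2$ case and passing to the limit of shrinking plateau width, or by constructing, directly at $\sigma_1=\sigma_2$, a piecewise-linear sub-solution whose two slopes match continuously across $x=\sigma_1 t$. In either route, the piecewise-Lipschitz comparison principle provides the glue that lets the sub-solution control the behaviour across the interface, making the disappearance of the plateau harmless at the level of the limiting viscosity solution.
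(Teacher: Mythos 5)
Your proposal takes a genuinely different route from the paper, and it is worth comparing the two because they buy different things. The paper's proof of Theorem \ref{thm:1-2b} is a perturbation-and-squeeze argument: it tweaks the $v$-equation's intrinsic growth rate to $r(1\pm\delta)$, so the perturbed systems satisfy $\sigma_1^\delta>\sigma_2$ strictly and Theorem \ref{thm:1-2} applies verbatim; sandwiching $(u,v)$ between the two perturbed solutions and letting $\delta\to 0$ (using the explicit $\delta$-dependence of $\hat{c}^\delta_{\rm nlp}$ together with continuity of $c_{\rm LLW}^\delta, \tilde{c}_{\rm LLW}^\delta$ in the coefficients) then yields $\overline{c}_1=\underline{c}_1=\sigma_1$, $\overline{c}_2=\underline{c}_2=\sigma_2$ and the bound on $c_3$. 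You instead propose to run the Hamilton--Jacobi machinery directly at $\sigma_1=\sigma_2$. This is in principle feasible: the super-solutions $\overline{w}_1,\overline{w}_2$ of Lemmas \ref{lem:underlinec1} and \ref{lem:underlinec2} use only the upper bound $\limsup v^\epsilon\le\chi_{\{x\le\sigma_1 t\}}$, which holds regardless of whether $\sigma_1>\sigma_2$, and the usc envelope of the Hamiltonian at the interface $x=\sigma_1 t=\sigma_2 t$ still gives the favorable coefficient, so the viscosity super-solution verification survives. Your second proposed fix --- matching the slopes of $\overline w_2$ across $x=\sigma_1 t$ --- is exactly what becomes necessary when $\bar{c}_{\rm nlp}=\sigma_1$: the intermediate linear piece of $\overline{w}_2$ collapses, the kink slope range widens from $[0,\tfrac{\sigma_1}{2}-\sqrt{a}]$ to $[0,\tfrac{\sigma_1}{2}]$, but the super-solution check still reduces to a perfect square $(\partial_x\varphi-\sigma_1/2)^2\ge 0$. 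Your first proposed fix --- sub-solutions with plateau-width-uniform estimates and shrinking the plateau --- is in spirit the paper's argument, just phrased at the sub-solution level rather than by perturbing the PDE.

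Two smaller observations on your diagnosis of the difficulty. First, you misattribute the role of the $(0,1)$-plateau: it enters only in the \emph{lower} bound on $w_{2,*}$ (Lemma \ref{lem:supsolutionw2}, via $\liminf v^\epsilon\ge\chi_{\{\sigma_2 t<x<\sigma_1 t\}}$), which drives the upper bound $\overline c_2\le\max\{c_{\rm LLW},\hat c_{\rm nlp}\}$. When $\sigma_1=\sigma_2$ you do not need that argument at all, since $\overline c_2\le\sigma_2=\sigma_1$ is already given by the elementary KPP comparison in Proposition \ref{prop:1}(i); the HJ super-solution $\overline w_2$, which controls the lower bound $\underline c_2\ge\hat c_{\rm nlp}$, never relies on the plateau. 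Second, the step you describe as ``verifying that the zero set is again $\{x\le\sigma_1 t\}$'' is a concrete computation that you should carry out and which the paper does implicitly in the limit: for $\lambda_u>1$ one gets $\hat c_{\rm nlp}=1-\sqrt a+\frac{1-a}{1-\sqrt a}=2=\sigma_2$, and for $\lambda_u\le 1$ the identity $\sigma_1^2-[(\sigma_1-2\lambda_u)^2+4a]=4(1-a)$ (valid precisely because $\sigma_1=\lambda_u+1/\lambda_u$) gives $\tilde\lambda_{\rm nlp}(\sigma_1-\tilde\lambda_{\rm nlp})=1-a$ and hence $\hat c_{\rm nlp}=\sigma_1$. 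With those two adjustments your plan is sound, and it has the advantage of not needing the auxiliary systems \eqref{eq:1-1'}--\eqref{eq:1-1''} or the continuity of $c_{\rm LLW}$ in the parameters; the paper's route has the advantage of not re-examining the delicate interface verification at the merged corner.
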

 For initial data with general exponential decay rates, Theorem \ref{thm:1-2} demonstrates that there are two separate monostable fronts where each of the two species invades with distinct speeds. Moreover, if the parameters of \eqref{eq:1-1} changes in such a way that $|\sigma_1-\sigma_2| \to 0$, the distance of the two fronts tends to zero. Therefore, the invasion front of $(k_1,k_2)$ transitioning directly into $(0,0)$, due to Tang and Fife, is in fact the special case when these two monostable fronts coincide (Theorem \ref{thm:1-2b}).
%
%
%
%
\begin{remark}
As in \cite{Evans_1989,LLL2019}, our approach can be applied to the spreading problem of competing species in higher dimensions under minor modifications. However, we choose to focus here on the one-dimensional case to keep our exposition simple, and close to the original formulation of the conjecture in \cite[Chapter 7]{Shigesada_1997}.
\end{remark}

\subsection{Outline of main ideas}
 To determine $c_1,\,c_2,\,c_3$, we introduce large deviation approach and construct appropriate viscosity super- and sub-solutions for certain Hamilton-Jacobi equations, and then apply the comparison principle (Theorem \ref{eq:D1}) to obtain the desired estimations.
We outline the main steps leading to the determination of the nonlocally pulled spreading speed $c_2$, as stated Theorem \ref{thm:1-2}, and remark that $c_1,c_3$ can be obtained by a similar even simpler argument as $c_2$.
\begin{enumerate}
\item  To estimate $c_2$ from below, we consider the transformation $w_2^\epsilon(t,x) = -\epsilon \log u\left( \frac{t}{\epsilon}, \frac{x}{\epsilon}\right)$ and show that the half-relaxed limits
$$
w_{2,*}(t,x) =  \hspace{-0.3cm} \liminf_{\scriptsize \begin{array}{c}\epsilon \to 0\\ (t',x') \to (t,x)\end{array}} \hspace{-.3cm}w_2^\epsilon(t',x') \quad \text{ and } \quad w_2^*(t,x) =  \hspace{-0.3cm}  \limsup_{\scriptsize\begin{array}{c} \epsilon \to 0 \\ (t',x') \to (t,x) \end{array}}  \hspace{-0.3cm}  w_2^\epsilon(t',x')
$$
exist, upon establishing uniform bounds in $C_{\mathrm{loc}}$ (see Lemma \ref{lem:3-1'}). By constructing viscosity super-solution $\overline{w}_2$, which satisfies
$$
\{(t,x):\overline w_2(t,x) =0\} = \{ (t,x): t>0 \, \text{ and }\, x \leq  \hat{c}_{\rm nlp} t\},
$$
 and using the comparison principle (Theorem \ref{thm:D}), we can show that $w_2^*\leq \overline{w}_2$, and thus $w_2^\epsilon \to 0$ locally uniformly in $\{(t,x): \,  x <  \hat{c}_{\rm nlp} t\}.$ One can then apply the arguments in \cite[Section 4]{Evans_1989} to show that
$$
\liminf_{\epsilon \to 0} u\left( \frac{t}{\epsilon}, \frac{x}{\epsilon}\right) >0 \quad \text{ in } \left\{ (t,x): t>0 \, \text{ and }\, x <  \hat{c}_{\rm nlp} t\right\}.
$$
This implies that ${c}_2 \geq \hat{c}_{\rm nlp}$ (see Lemma \ref{lem:underlinec2}).

\item To estimate $c_{2}$ from above, we construct viscosity sub-solution $\underline{w}_2$ and apply Theorem \ref{thm:D} to estimate $w_{2}$ from below, see Proposition \ref{prop:overlinec2}. This enables us to obtain a large deviation estimate of $u$. Namely, for each small $\delta>0$, let $\hat{c}_\delta  = \sigma_1-\delta$, we have
$$
u(t, \hat{c}_\delta t) \leq \exp \left( -[ \hat{\mu}_\delta + o(1)]t \right) \quad \text{ for } t \gg 1,
$$
where $\hat{\mu}_\delta  =  \overline w_2(1, \hat{c}_\delta)= \overline w_2(1, \sigma_1 - \delta)$. Now, recalling that $(u,v)$ is a solution to \eqref{eq:1-1} restricted to the domain $\{(t,x): 0\leq x \leq \hat{c}_\delta t\}$, with boundary condition satisfying
$$
\lim_{t\to\infty} (u,v)(t,0) = (k_1,k_2) \quad \text{ and }\quad \lim_{t\to\infty} (u,v)(t, \hat{c}_\delta t) = (0,1),
$$
we may apply Lemma \ref{lem:appen1} in Appendix to show that $\hat{c}_\delta$ and $\hat{\mu}_\delta$ completely controls the spreading speed $c_2$ of $u$ from above.\end{enumerate}


The rest of the paper is organized as follows: In Section \ref{S2}, we give upper estimates $c_i$ for $i=1,\,2,\,3$ and $c_2\geq c_{\rm LLW}$. In Section \ref{S3}, we give lower estimates of $c_1,\,c_2$. The approximate asymptotic expressions of $u$ and $v$ are established in Section \ref{S4}, where we also determine $c_2,\, c_3$. In Section \ref{S4b},  we discuss the relation of our results with the invasion mode due to Tang and Fife \cite{Tang_1980}. In Section \ref{S5}, we discuss the relation of our result with that of \cite{Girardin_2018} due to Girardin and the last author. In Section \ref{S6}, we prove an extension which is associated to the spreading speeds of the three-species competition systems. We conclude the article with the Appendix. Therein we give the comparison principle of Hamilton-Jacobi equation with piecewise Lipschitz continuous Hamiltonian and two other useful lemmas.

This paper concerns the Cauchy problem of a system of reaction-diffusion equations modeling two competing species. For the spreading of two species into an open habitat, we refer to
\cite{Li_2018} for an integro-difference competition model, and to\cite{Du_2018} for a competition model with free-boundaries. See also \cite{Guo_2015,Wu_2015,Wang_2017,Wang_2018,Liu_2019} for other related results in free-boundary problems. We also note that in those works the spreading speeds are always locally determined and thus do not interact.

\section{Estimating the maximal and minimal speeds}\label{S2}
The concepts of maximal and minimal spreading speeds are introduced in \cite[Definition 1.2]{Hamel2012} for a single species;  see also \cite{Garnier_2012,LLL2019}. In our setting, we define
\begin{equation}\label{eq:speeds}
\begin{cases}
\smallskip
\overline{c}_1=\inf{\{c>0~|~\limsup \limits_{t\rightarrow \infty}\sup\limits_{x>ct} v(t,x)=0\}},\\
\smallskip
\underline{c}_1=\sup{\{c>0~~|\liminf \limits_{t\rightarrow \infty}\inf\limits_{ct-1<x<ct} v(t,x)>0\}}, \\
\smallskip
\overline{c}_2=\inf\{c>0~|~\limsup\limits_{t\rightarrow \infty}\sup\limits_{ x> ct}u(t,x)=0\}, \\
\smallskip
\underline{c}_2=\sup{\{c>0~~|\liminf \limits_{t\rightarrow \infty}\inf\limits_{ct-1<x<ct} u(t,x)>0\}},\\
\smallskip
\overline{c}_3=\inf\{c<0~|~\liminf\limits_{t\rightarrow \infty}\inf\limits_{ ct<x< ct+1}v(t,x) >0\},\\
\underline{c}_3=\sup\{c<0~|~\limsup\limits_{t\rightarrow \infty}\sup\limits_{ x< ct}v(t,x)=0\},
 \end{cases}
\end{equation}
where $\overline{c}_1$ and $\underline{c}_1$ (resp.  $\overline{c}_2$ and $\underline{c}_2$) are the maximal and minimal rightward spreading speeds of species $v$ (resp. species $u$), whereas  $-\underline{c}_3$ and $-\overline{c}_3$ are the maximal and minimal leftward spreading speeds of $v$, respectively.

In this section, for initial data satisfying ($\mathrm{H_\lambda})$, we will give some estimates of the maximal and minimal spreading speeds. The main result of this section can be precisely stated as follows.


\begin{proposition}\label{prop:1}
Let $(u,v)$ be a solution of \eqref{eq:1-1} with initial data  satisfying $\rm{(H_\lambda)}$. Then the spreading speeds defined in \eqref{eq:speeds} satisfy
\begin{itemize}
\item[\rm{(i)}] $\overline{c}_i\leq \sigma_i$ for $i=1,\,2$ and $\overline{c}_3\leq -\sigma_3$;
\item[\rm{(ii)}] $\underline{c}_2\geq c_{\rm LLW}$, and $\overline{c}_3\leq -\tilde{c}_{\rm LLW}$,
\end{itemize}
where $\sigma_1, \sigma_2, \sigma_3$ are defined in \eqref{eq:sigma} and $c_{\rm LLW},\, \tilde{c}_{\rm LLW}$ are given respectively in Theorem \ref{thm:LLW} and Remark \ref{rmk:LLW}. Furthermore, we have
\begin{equation}\label{eq:spreadingly2.1}
\lim\limits_{t\to \infty} (|u(t,0)-k_1| + |v(t,0)-k_2|) =0.
\end{equation}
\end{proposition}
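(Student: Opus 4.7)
The plan is to establish parts (i) and (ii) via two complementary comparison tools: scalar KPP-type comparisons with linearised equations for part (i), and the monotone-system comparison based on the change of variables $(u,1-v)$ combined with Theorem~\ref{thm:LLW} and Remark~\ref{rmk:LLW} for part (ii). The pointwise convergence \eqref{eq:spreadingly2.1} at $x=0$ will then follow by sandwiching $(u,v)(t,0)$ between the two comparison pairs constructed for part (ii).

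For the upper bounds $\overline{c}_i \leq \sigma_i$ with $i=1,2$ in part (i), I would use that $u,v \geq 0$ yields the one-sided linearised inequalities $\partial_t v - d\partial_{xx} v \leq rv$ and $\partial_t u - \partial_{xx} u \leq u$. I then construct an explicit exponential super-solution $V(t,x) = A e^{-\lambda^*(x-\sigma_1 t)}$ with $\lambda^* = \lambda_v^+ \wedge \sqrt{r/d}$ for $v$, and analogously for $u$ using $\lambda_u \wedge 1$. Since $v_0(x) \leq C e^{-\lambda^* x}$ for $x > 0$ by $\mathrm{(H_\lambda)}$, while $v \leq 1$ globally by the maximum principle, choosing $A$ large enough enforces $V(0,\cdot) \geq v_0$, so the linear comparison yields $v(t,x) \leq V(t,x)$ and hence $\overline{c}_1 \leq \sigma_1$. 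For $\overline{c}_3 \leq -\sigma_3$ the direction is reversed: $u \leq 1$ implies $\partial_t v - d\partial_{xx} v \geq rv(1-b-v)$, so $v$ is a super-solution of a KPP equation with intrinsic growth rate $r(1-b)$. Letting $w$ solve this KPP equation with initial datum $w_0 = \epsilon v_0 \leq v_0$ (with $\epsilon$ small enough that $\epsilon v_0 \leq 1-b$), the classical KPP leftward-spreading result with exponential tail $\sim \epsilon e^{\lambda_v^- x}$ at $-\infty$ gives $w(t,x) \to 1-b$ locally uniformly in $\{x > (-\sigma_3+\eta)t\}$, and $v \geq w$ then yields $\overline{c}_3 \leq -\sigma_3$.

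For part (ii), I exploit the fact that \eqref{eq:1-1} is order-preserving in the sense that $u_0^{(1)} \leq u_0^{(2)}$ and $v_0^{(1)} \geq v_0^{(2)}$ initially imply the same inequalities for all $t > 0$. To prove $\underline{c}_2 \geq c_{\rm LLW}$, I pick a compactly supported $u_0^- \in [0,1)$ with $u_0^- \leq u_0$ (which exists since $u_0 > 0$) and take $v_0^+ = 1-\rho_2 \geq v_0$ for some compactly supported $\rho_2 \in [0,1)$; the choice $\rho_2 \equiv 0$ works since $v_0 \leq 1$. Theorem~\ref{thm:LLW} applied to the solution $(u^-, v^+)$ starting from this data gives $(u^-, v^+) \to (k_1,k_2)$ locally uniformly on $\{|x| < c_{\rm LLW} t\}$, and the comparison $u \geq u^-$ then yields $\liminf_{t \to \infty} \inf_{ct-1 < x < ct} u(t,x) \geq k_1 > 0$ for every $c \in (0, c_{\rm LLW})$, so $\underline{c}_2 \geq c_{\rm LLW}$. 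A dual construction with $u_0^+ \equiv 1 \geq u_0$ and compactly supported $v_0^- \leq v_0$, combined with Remark~\ref{rmk:LLW}, gives $v \geq v^- \to k_2$ on $\{|x| < \tilde{c}_{\rm LLW} t\}$ and hence $\overline{c}_3 \leq -\tilde{c}_{\rm LLW}$. Evaluating both comparisons at $x=0$, which lies inside both cones for large $t$ since $c_{\rm LLW}, \tilde{c}_{\rm LLW} > 0$, yields \eqref{eq:spreadingly2.1}.

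I expect the main technical subtlety to be the careful treatment of the critical exponent cutoff in the super-solutions for part (i): when $\lambda_v^+ > \sqrt{r/d}$ (or $\lambda_u > 1$, or $\lambda_v^- > \sqrt{r(1-b)/d}$), one must \emph{degrade} the decay rate used in the super-solution down to the critical value in order to recover the correct minimal KPP speed (e.g.\ $2\sqrt{dr}$ rather than the faster, incorrect value $d\lambda_v^+ + r/\lambda_v^+$). This matching of the initial data's exponential tail against the critical KPP rate is precisely what the definitions \eqref{eq:sigma} of $\sigma_1,\sigma_2,\sigma_3$ encode.
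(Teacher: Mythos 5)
Your proposal follows essentially the same route as the paper for parts (i) and (ii): for (i) you compare against exponential profiles travelling at speed $\sigma_i$ (the paper takes the truncated nonlinear KPP super-solution $\min\{1,Me^{-\lambda^*(x-\sigma_i t)}\}$, you the linearised $Ae^{-\lambda^*(x-\sigma_i t)}$ together with $v\le 1$; both work, and your remark about degrading the decay rate to the critical value is exactly what the $\wedge$ in \eqref{eq:sigma} encodes), and for $\overline c_3\le-\sigma_3$ you both bound $v$ from below by a scalar KPP solution with intrinsic rate $r(1-b)$ (your extra factor $\epsilon$ in $w_0=\epsilon v_0$ is harmless but unnecessary, since the paper simply uses $w_0=v_0(-\cdot)$ directly). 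For (ii), you both sandwich $(u,v)$ in the competitive order against the LLW solutions launched from $(\tilde u_0,1)$ and $(1,\tilde v_0)$ with compactly supported $\tilde u_0\le u_0$, $\tilde v_0\le v_0$. Where you genuinely diverge is in the proof of \eqref{eq:spreadingly2.1}: you obtain the pointwise convergence at $x=0$ directly by noting that the two LLW comparison pairs already squeeze $(u,v)(t,0)$ from both sides and each converges to $(k_1,k_2)$ at the origin by Theorem~\ref{thm:LLW} and Remark~\ref{rmk:LLW}; the paper instead collects one-sided positivity estimates (\eqref{estim_3} and \eqref{eq:positiveu'}) and feeds them into items (a) and (c) of Lemma~\ref{lem:entire1}, an entire-solution argument. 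Your sandwiching is more self-contained, avoids invoking the appendix lemma, and sidesteps the minor subtlety that Lemma~\ref{lem:entire1} only furnishes conclusions strictly inside open cones, so applying it at $x=0$ requires the positivity estimates to straddle the origin rather than merely reach it. Both routes are correct; yours is the cleaner closing step.
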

\begin{proof}
  We will complete the proof in the following order: \rm{(1)} $\overline{c}_2 \leq \sigma_2$,  \rm{(2)} $\overline{c}_1\leq \sigma_1$, \rm{(3)} $\overline c_3\leq -\sigma_3$,  \rm{(4)} $\overline c_3\leq -\tilde c_{\rm LLW}$, \rm{(5)} $\underline c_2 \geq c_{\rm LLW}$, \rm{(6)} \eqref{eq:spreadingly2.1} holds. \\
  \noindent{\bf Step 1.}  We show assertions \rm{(1)}, \rm{(2)} and \rm{(3)}.

Observe that for some $M>0$ the function
$$
\overline{u}(t,x):= \min\{1, M \exp(-\min\{\lambda_u,1\}(x - \sigma_2 t))\}
$$
is a weak super-solution to
 the single KPP-type equation
\begin{equation*}
\partial_t  \overline{u} - \partial_{xx} \overline{u} = \overline{u} (1-\overline{u} ) \,\quad \text{in }(0,\infty) \times \mathbb{R},
\end{equation*}
of which $u(t,x)$ is clearly a sub-solution.
By choosing the constant $M>0$ so large that $u_0(x)\leq \overline{u}(0,x)$ in $\mathbb{R}$, it follows by comparison that
\begin{equation}\label{eq:ukpp00}
u(t,x) \leq \overline{u}(t,x)=  \min\left\{1, M \exp(-\min\{\lambda_u,1\}\left(x -\sigma_2 t\right) \right\} 
\end{equation}
for $(t,x) \in [0,\infty) \times \mathbb{R}$.  In particular,
\begin{equation}\label{eq:ukpp}
\lim_{t \to \infty} \sup_{x > (\sigma_2+\eta)t} |u(t,x)| =0\,\quad \text{ for each }\eta >0.
\end{equation}
This proves $\overline{c}_2 \leq \sigma_2$, i.e., assertion \rm{(1)} holds.

Similarly, we deduce assertion \rm{(2)}
by comparison with
$$
\bar{v}(t,x):= \min\{1, M \exp(-\min\{\lambda^+_v, \sqrt{r/d}\}(x-\sigma_1t))\}
$$
which is
 the solution of
 \begin{equation*}
\left\{
\begin{array}{ll}
\partial_t \overline{v} - d\partial_{xx} \overline{v} = r\overline{v}(1-\overline{v}), &\text{in }(0,\infty) \times \mathbb{R},\\
\overline{v}(0,x)=\min(1,M e^{-\min\{\lambda_v^+,\sqrt{\frac{r}{d}}\}x}), & x\in\mathbb{R}.
\end{array}
\right.
\end{equation*}

To prove assertion \rm{(3)}, let $\tilde v(t,x)=v(t,-x)$, we turn to consider another single KPP-type equation
 \begin{equation*}
\left\{
\begin{array}{ll}
\partial_t \underline{v} - d\partial_{xx} \underline{v} = r\underline{v}(1-b-\underline{v}), &\text{in }(0,\infty) \times \mathbb{R},\\
\underline{v}(0,x)=v_0(-x), & x\in\mathbb{R}.
\end{array}
\right.
\end{equation*}
Again the scalar comparison principle implies $v(t,-x)=\tilde v(t,x)\geq \underline{v}$. By the results in \cite{Kametaka_1976} or \cite{Mckean_1975}, we have
\begin{equation}\label{estim_3}
 \liminf\limits_{t\to\infty} \inf\limits_{ (-\sigma_3+\eta)t<x\leq 0} v\geq \liminf\limits_{t\to\infty} \inf\limits_{ |x|<(\sigma_3-\eta)t} \tilde v\geq \frac{1-b}{2},
\end{equation}
which means $\overline{c}_3\leq -\sigma_3$.

\noindent{\bf Step 2.} We show assertions \rm{(4)} and \rm{(5)}.

Given any non-trivial, compactly supported function $\tilde {v}_0$  such that $0 \leq \tilde{v}_0 \leq v_0$. Then
$$
(u_0(x),v_0(x)) \preceq (1, \tilde{v}_0(x)) \,\quad \text{ in }\mathbb{R}.
$$
Let $(\tilde u_{\rm LLW}, \tilde v_{\rm LLW})$ be the solution to \eqref{eq:1-1} with initial value $(1, \tilde{v}_{0}(x))$. Then
Theorem \ref{thm:LLW} and Remark \ref{rmk:LLW} guarantee the existence of
$\tilde{c}_{\rm LLW} \ \geq 2\sqrt{dr(1-b)}$, such that 
$$
\liminf_{t \to \infty} \inf_{|x| < |c|t } \tilde v_{\rm LLW}(t,x)  >0  \quad \text{ for each } c \in (-\tilde{c}_{\rm LLW}, 0).
$$
By the comparison principle for \eqref{eq:1-1}, we have
$(u,v) \preceq (\tilde u_{\rm LLW},\tilde v_{\rm LLW})$ for all $(t,x) \in (0,\infty) \times \mathbb{R}$, which yields, for each $c \in (- \tilde{c}_{\rm LLW}, 0)$,
$$
\liminf_{t \to \infty} \inf_{ct < x < ct + 1} v(t,x) \geq  \liminf_{t \to \infty} \inf_{ct < x < ct + 1} \tilde v_{\rm LLW}(t,x)  >0.
$$
This proves $\overline{c}_3 \leq - \tilde{c}_{\rm LLW}$ and thus assertion \rm{(4)} holds.

Similarly, we can get show assertion (5), i.e.,  $\underline{c}_2 \geq c_{\rm LLW}$. By comparing $(u,v)$ with the solution $(u_{\rm LLW}, v_{\rm LLW})$ of \eqref{eq:1-1} with initial condition $(\tilde{u}_0, 1)$, for some compactly supported $\tilde{u}_0$ satisfying $0 \leq \tilde{u}_0 \leq u_0$, and then using  Theorem \ref{thm:LLW}. In this way, we get
\begin{align}\label{eq:positiveu'}
\liminf_{t \to \infty} \inf_{|x| < c t } u \geq \liminf_{t \to \infty} \inf_{|x| < c t } u_{\rm LLW} >0 \quad \text{ for each } c \in (0, c_{\rm LLW}).
\end{align}

\noindent{\bf Step 3.} We show assertion \rm{(6)}.
 In view of \eqref{estim_3} and \eqref{eq:positiveu'}, one can deduce \eqref{eq:spreadingly2.1} from items \rm{(a)} and \rm{(c)} of Lemma \ref{lem:entire1}.
\end{proof}

\section{Estimating $\displaystyle \overline c_1$ and $\displaystyle \overline c_2$ from below}\label{S3} 
We assume $\sigma_1>\sigma_2$ throughout this section. In this section, we estimate $\underline c_1$ and $\underline c_2$ from below via the large deviation approach and applying Theorem \ref{thm:D}.
To this end, we introduce a small parameter $\epsilon$ via the following scaling
\begin{equation}\label{scaling}
u^\epsilon(t,x)=u\left(\frac{t}{\epsilon},\frac{x}{\epsilon}\right)\quad \mathrm{and}\quad v^\epsilon(t,x)=v\left(\frac{t}{\epsilon},\frac{x}{\epsilon}\right).
\end{equation}
Under the new scaling, we rewrite  the equation of $u^\epsilon$ and $v^\epsilon$ in \eqref{eq:1-1} as
\begin{align}\label{eq:1-1'}
\left \{
\begin{array}{ll}
\partial_tu^\epsilon=\epsilon\partial_{xx} u^\epsilon+\frac{u^{\epsilon}}{\epsilon}(1-u^\epsilon-av^\epsilon), & \text{ in } (0,\infty)\times\mathbb{R},\\
\partial_tv^\epsilon=\epsilon d\partial_{xx} v^\epsilon+r\frac{v^{\epsilon}}{\epsilon}(1-bu^\epsilon-v^\epsilon), & \text{ in } (0,\infty)\times\mathbb{R},\\
u^\epsilon(0,x)=u_0(\frac{x}{\epsilon}),& \text{ on } \mathbb{R},\\
v^\epsilon(0,x)=v_0(\frac{x}{\epsilon}), & \text{ on } \mathbb{R}.
\end{array}
\right.
\end{align}

To obtain the asymptotic behaviors of  $v^\epsilon$ and $u^\epsilon$ as $\epsilon\rightarrow 0$, the idea is to consider the WKB ansatz $w_1^\epsilon$ and $w_2^\epsilon$, which are given respectively  by
\begin{equation}\label{eq:w}
w_1^\epsilon(t,x)=-\epsilon\log{v^\epsilon(t,x)}, \quad w_2^\epsilon(t,x)=-\epsilon\log{u^\epsilon(t,x)},
\end{equation}
 and satisfy, respectively, the equations
\begin{align}\label{eq:epsilonw1}
\left \{
\begin{array}{ll}
\partial_t w^\epsilon-\epsilon d\partial_{xx} w^\epsilon+d| \partial_xw^\epsilon|^2+r(1-bu^\epsilon-v^\epsilon)=0, & \text{ in } (0,\infty)\times\mathbb{R},\\
w^\epsilon(0,x)=-\epsilon\log{v^\epsilon(0,x)},  & \text{ on } ~\mathbb{R},\\
\end{array}
\right.
\end{align}
and
\begin{align}\label{eq:epsilonw2}
\left \{
\begin{array}{ll}
\partial_tw^\epsilon-\epsilon\partial_{xx} w^\epsilon+| \partial_xw^\epsilon|^2+1-u^\epsilon-av^\epsilon=0, & \text{ in } (0,\infty)\times\mathbb{R},\\
w^\epsilon(0,x)=-\epsilon\log{u^\epsilon(0,x)},  & \text{ on } \mathbb{R}.\\
\end{array}
\right.
\end{align}

\begin{lemma}\label{lem:underlinec} Let $G$ be an open set in $(0,\infty)\times \mathbb{R}$ and $K, K'$ be compact sets such that $K \subset {\rm Int}\,K'  \subset K'  \subset G.$
\begin{itemize}
\item[\rm{(a)}] If
$w_2^\epsilon \to 0$ uniformly in $K'$ as $\epsilon \to 0$, then
 \begin{equation}\label{eq:underlineu}
\liminf_{\epsilon\to 0} \inf_{K}
 u^\epsilon \geq 1-a\limsup_{\epsilon\to 0} \sup_{K'}
v^\epsilon;
\end{equation}
\item[\rm{(b)}] If
$w_1^\epsilon \to 0$ uniformly in $K' $ as $\epsilon \to 0$, then
 \begin{equation}\label{eq:underlinev}
\liminf_{\epsilon\to 0} \inf_{K}
 v^\epsilon \geq 1-b\limsup_{\epsilon\to 0} \sup_{K'}
 u^\epsilon.
\end{equation}
\end{itemize}
\end{lemma}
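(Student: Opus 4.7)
By symmetry I focus on (a); assertion (b) is obtained by exchanging the roles of $(u^\epsilon, v^\epsilon)$ and adjusting for the coefficients $d,r$. Set $V^* := \limsup_{\epsilon \to 0} \sup_{K'} v^\epsilon$. When $1 - aV^* \leq 0$ the conclusion \eqref{eq:underlineu} is trivial from $u^\epsilon \geq 0$, so I may assume $1 - aV^* > 0$. The plan is to implement the Evans--Souganidis intuition for Fisher-KPP \cite{Evans_1989}: wherever $w_2^\epsilon \to 0$ one has $u^\epsilon \geq e^{-o(1)/\epsilon}$, which is large enough for the $O(1/\epsilon)$ reaction term in the $u^\epsilon$-equation of \eqref{eq:1-1'} to pull $u^\epsilon$ rapidly up to its quasi-equilibrium value $1 - a v^\epsilon$.

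Concretely, fix $(t_0, x_0) \in K$ and a small $\eta > 0$. Using $K \subset \mathrm{Int}\, K'$ and compactness, I would choose $r, s > 0$ independent of $(t_0, x_0)$ so that the parabolic cylinder $Q := (t_0 - s, t_0] \times (x_0 - r, x_0 + r)$ is contained in $K'$. The two hypotheses then give $u^\epsilon \geq e^{-\eta/\epsilon}$ and $v^\epsilon \leq V^* + \eta$ on $Q$ for all $\epsilon$ sufficiently small. Letting $\phi_\epsilon(x) := \cos\!\bigl(\frac{\pi(x - x_0)}{2 r}\bigr)$ be the principal Dirichlet eigenfunction of $-\epsilon \partial_{xx}$ on $(x_0 - r, x_0 + r)$, with eigenvalue $\mu_\epsilon = \epsilon \pi^2/(4 r^2)$ and $\phi_\epsilon(x_0) = 1$, I set $\underline{u}(t, x) := g(t)\, \phi_\epsilon(x)$, where $g$ solves the logistic ODE
\[
\epsilon g'(t) = g(t)\bigl(K_\epsilon - g(t)\bigr),\qquad g(t_0 - s) = e^{-\eta/\epsilon},\qquad K_\epsilon := 1 - a(V^* + \eta) - \epsilon \mu_\epsilon.
\]
Using $0 \leq \phi_\epsilon \leq 1$ and $v^\epsilon \leq V^* + \eta$, a direct computation shows that $\underline{u}$ is a subsolution of the $u^\epsilon$-equation in \eqref{eq:1-1'} on $Q$, while $\underline{u} \leq u^\epsilon$ on the parabolic boundary of $Q$ (zero on the lateral sides, $\leq e^{-\eta/\epsilon}$ at the initial time). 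Parabolic comparison then gives $u^\epsilon(t_0, x_0) \geq g(t_0)$.

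Integrating the logistic ODE explicitly yields
\[
g(t_0) = \frac{K_\epsilon}{1 + (K_\epsilon e^{\eta/\epsilon} - 1)\, e^{-K_\epsilon s/\epsilon}}.
\]
For $\eta$ small enough that $K_\epsilon s > 2\eta$ holds for all small $\epsilon$, the denominator tends to $1$, so $g(t_0) \to 1 - a(V^* + \eta)$. Uniformity of the construction in $(t_0, x_0) \in K$ promotes this pointwise bound to $\liminf_{\epsilon \to 0} \inf_K u^\epsilon \geq 1 - a(V^* + \eta)$, and sending $\eta \to 0$ yields \eqref{eq:underlineu}. The main obstacle is tuning the subsolution so that the $O(\epsilon)$ eigenvalue cost $\epsilon \mu_\epsilon$ and the logistic ``pull-up time'' $\eta/K_\epsilon$ are both absorbed in the limit; this is why the time window $s$ must be chosen large relative to $\eta$, and $\eta$ small relative to $1 - aV^*$.
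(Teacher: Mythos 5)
Your proof is correct, but it takes a genuinely different route from the paper's. The paper works directly at the level of the transformed function $w_2^\epsilon$: it fixes $(t_0,x_0)\in K$, considers a small quadratic test function $\rho(t,x)=|x-x_0|^2+(t-t_0)^2$, locates the global maximum $(t_\epsilon,x_\epsilon)$ of $w_2^\epsilon - \rho$ over $K'$ (which converges to $(t_0,x_0)$ because $w_2^\epsilon\to 0$ uniformly), and reads the lower bound $u^\epsilon(t_\epsilon,x_\epsilon)\ge 1-a\sup_{K'}v^\epsilon+o(1)$ off the PDE for $w_2^\epsilon$ evaluated at the maximum point, where the first-derivative terms vanish and the second-derivative term has a sign. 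The final step $u^\epsilon(t_0,x_0)\ge u^\epsilon(t_\epsilon,x_\epsilon)$ then follows from the maximality of $w_2^\epsilon-\rho$ at $(t_\epsilon,x_\epsilon)$ together with $\rho(t_0,x_0)=0$. Your argument instead stays on the $u^\epsilon$-side and builds an explicit parabolic subsolution $g(t)\phi(x)$ on a small spacetime cylinder $Q\subset K'$, using the uniform lower bound $u^\epsilon\ge e^{-\eta/\epsilon}$ on $Q$ (from $w_2^\epsilon\to 0$) as the seed and the $O(1/\epsilon)$ logistic term to pull $g$ up to its carrying capacity by time $t_0$. Both arguments are sound, and they trade off roughly as follows: the paper's test-function argument is shorter, avoids explicit ODE solutions and eigenfunctions, and dovetails with the viscosity-solution formalism used throughout; your explicit subsolution is more elementary and more quantitative (it produces the rate at which $u^\epsilon$ recovers), at the cost of more bookkeeping in the choice of $r$, $s$, $\eta$. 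Your bookkeeping is consistent: $\epsilon\mu_\epsilon = \epsilon^2\pi^2/(4r^2)\to 0$, $r,s$ can be fixed uniformly over $K$ by compactness and $K\subset\mathrm{Int}\,K'$, the condition $K_\epsilon s>2\eta$ is achievable by taking $\eta$ small after $r,s$ are fixed, and the trivial case $1-aV^*\le 0$ is correctly dispatched up front.
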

\begin{proof}
We first prove (a) by adapting the arguments from \cite[Section 4]{Evans_1989}.
Let $K, K'$ and $G$ be given as above.


Fix an arbitrary $(t_0,x_0) \in K$ and define the test function
$$
\rho(t,x) = |x-x_0|^2 + (t-t_0)^2.
$$
Since (i)  $(t_0,x_0) \in K \subset {\rm Int}\, K'$ and (ii) $w^\epsilon_2 \to 0$ uniformly in $K'$, the function $w^\epsilon_2 - \rho$ attains global maximum over $K'$ at $(t_\epsilon,x_\epsilon)\in {\rm Int}\,K'$ such that $(t_\epsilon,x_\epsilon) \to (t_0,x_0)$ as $\epsilon \to 0$.
Furthermore, $\partial_t \rho(t_\epsilon,x_\epsilon), \partial_x \rho(t_\epsilon,x_\epsilon) \to 0$, so that at the point $(t_\epsilon,x_\epsilon)$,
$$
o(1) = \partial_t \rho - \epsilon \partial_{xx} \rho + |\partial_x\rho|^2 \leq \partial_t w_2^\epsilon - \epsilon \partial_{xx} w_2^\epsilon + |\partial_x w_2^\epsilon|^2 \leq u^\epsilon - 1 + a\limsup_{\epsilon\to 0} \sup_{K'}
 v^\epsilon.
$$
 This yields
$$
u^\epsilon(t_\epsilon, x_\epsilon) \geq 1-a\limsup_{\epsilon\to 0} \sup_{K'}
 v^\epsilon + o(1).
$$
Since $w^\epsilon_2 - \rho$ attains maximum over $K'$ at $(t_\epsilon,x_\epsilon)$,  we have in particular
$$
w_2^\epsilon(t_\epsilon,x_\epsilon)  \geq (w_2^\epsilon - \rho)(t_\epsilon,x_\epsilon) \geq(w_2^\epsilon - \rho)(t_0,x_0) = w_2^\epsilon(t_0,x_0).
$$
%
Recalling $u^\epsilon(t_0,x_0) = e^{-\epsilon w^\epsilon_2(t_0,x_0)}$ and $u^\epsilon(t_\epsilon,x_\epsilon) = e^{-\epsilon w^\epsilon_2(t_\epsilon,x_\epsilon)}$, we therefore have
$$
u^\epsilon(t_0,x_0) \geq u^\epsilon(t_\epsilon,x_\epsilon) \geq 1-a\limsup_{\epsilon\to 0} \sup_{K'}
 v^\epsilon + o(1).
$$ Since this argument is uniform for $(t_0,x_0) \in K$ (depends only on $K, K'$ and $G$), we deduce assertion \rm{(a)}.  The proof for {\rm(b)} is analogous.
\end{proof}

Next, we will pass to the (upper and lower) limits using the half-relaxed limit method, 
which is due to  Barles and Perthame \cite{BP1987}. Define
$$w_1^*(t,x)=\limsup\limits_{\scriptsize \begin{array}{c}\epsilon \to 0\\ (t',x') \to (t,x)\end{array}} \hspace{-.3cm} w_1^\epsilon (t',x'),$$
$$w_2^*(t,x)=\limsup\limits_{\scriptsize \begin{array}{c}\epsilon \to 0\\ (t',x') \to (t,x)\end{array}} \hspace{-.3cm} w_2^\epsilon (t',x')\quad \mathrm{and} \quad w_{2,*}(t,x)=\liminf\limits_{\scriptsize \begin{array}{c}\epsilon \to 0\\ (t',x') \to (t,x)\end{array}} \hspace{-.3cm} w_2^\epsilon (t',x').$$

That the above are well defined is due to
 the following lemma:
\begin{lemma}\label{lem:3-1'}
Let $w_1^\epsilon$ and $w_2^\epsilon$ be the solutions to \eqref{eq:epsilonw1} and \eqref{eq:epsilonw2}, respectively. Then there exits some $Q>0$, independent of $\epsilon$ small, such that
\begin{subequations}\label{eq:spreadinglyprop}
\begin{equation}\label{estamition1}
  \max\{\lambda_v^+x_++\lambda_v^-x_-- Q(t+\epsilon),0\} \leq {w}_1^{\epsilon}(t,x)
\leq\lambda_v^+x_++\lambda_v^-x_-+Q(t+\epsilon),
\end{equation}
\begin{equation}\label{estamition2}
\max\{\lambda_u x_+- Q(t+\epsilon),0\} \leq {w}_2^{\epsilon}(t,x)
\leq \lambda_u x_++  Q(t+\epsilon),
\end{equation}
\begin{equation}\label{estamition3}
0 \leq {w}_1^{\epsilon}(t,x)
\leq   Q(\lambda_v^+x_++\lambda_v^-x_-+\epsilon),
\end{equation}
\begin{equation}\label{estamition4}
0 \leq {w}_2^{\epsilon}(t,x) \leq  Q(\lambda_ux_++\epsilon),
\end{equation}
\end{subequations}
 for $ (t,x) \in [0,\infty)\times \mathbb{R}$, where $x_+=\max\{x,0\}$ and $x_-=\max\{-x,0\}$.
\end{lemma}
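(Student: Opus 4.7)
My plan is to derive \eqref{estamition1}--\eqref{estamition4} directly from the scalar parabolic comparison principle applied to $u^\epsilon$ and $v^\epsilon$, since all four statements are equivalent to two-sided exponential bounds on $u^\epsilon$ and $v^\epsilon$. First, the normalization $0\leq u^\epsilon, v^\epsilon\leq 1$ on $[0,\infty)\times\mathbb R$ follows by comparison of each equation with the logistic equation, yielding $w_i^\epsilon \geq 0$ and accounting for the trivial lower bounds in \eqref{estamition3}--\eqref{estamition4}. Next, translating $(\mathrm{H}_\lambda)$ via the continuity and strict positivity of $(u_0,v_0)$, I obtain constants $0<c_\# \le M_\#$ such that
\begin{equation*}
c_v\, e^{-\lambda_v^+ y_+ - \lambda_v^- y_-} \leq v_0(y) \leq M_v \min\{1,\; e^{-\lambda_v^+ y_+ - \lambda_v^- y_-}\},\qquad c_u\, e^{-\lambda_u y_+} \leq u_0(y) \leq M_u \min\{1,\; e^{-\lambda_u y_+}\}.
\end{equation*}

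For the time-dependent bounds \eqref{estamition1} and \eqref{estamition2}, I propose exponential barriers that match the initial decay. On the $v$-side:
\begin{align*}
\underline v(t,x) &= \min\bigl\{1,\; c_v\, e^{-(\lambda_v^+ x + Qt)/\epsilon},\; c_v\, e^{(\lambda_v^- x - Qt)/\epsilon}\bigr\},\\
\overline v(t,x)  &= \min\bigl\{1,\; M_v\, e^{(-\lambda_v^+ x + Qt)/\epsilon},\; M_v\, e^{(\lambda_v^- x + Qt)/\epsilon}\bigr\}.
\end{align*}
A direct computation using only $0\le u^\epsilon, v^\epsilon\le 1$ (so that $-b \leq 1-bu^\epsilon-v^\epsilon\leq 1$) shows that each smooth branch of $\underline v$ is a classical sub-solution to $\partial_t v - \epsilon d\partial_{xx}v - \tfrac{r}{\epsilon} v(1-bu^\epsilon-v)=0$ as soon as $Q\geq rb$, and each smooth branch of $\overline v$ is a classical super-solution as soon as $Q\geq d(\lambda_v^\pm)^2 + r$; since pointwise minima of viscosity sub- (resp.\ super-) solutions preserve those properties, $\underline v$ and $\overline v$ are admissible barriers. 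The initial comparisons hold by construction, so scalar parabolic comparison yields $\underline v\leq v^\epsilon\leq \overline v$; taking $-\epsilon\log$ and enlarging $Q$ to absorb the constants $\log c_v,\log M_v$ delivers \eqref{estamition1}. The bound \eqref{estamition2} is entirely analogous, with the sole modification that since $u_0$ has no exponential decay at $-\infty$, the sub-solution takes the form $\underline u(t,x) = \min\{\theta,\; c_u\, e^{-(\lambda_u x + Qt)/\epsilon}\}$ with plateau $\theta := \min\{\theta_0, 1-a\}$, the constant branch $\theta$ being a sub-solution precisely because $\theta(1-\theta-av^\epsilon)\geq 0$ whenever $\theta\leq 1-a$.

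The remaining time-uniform bounds \eqref{estamition3}--\eqref{estamition4} require a separate stationary argument whose crucial input is
\begin{equation*}
\eta_v := \inf_{t\geq 0} v(t,0) > 0,\qquad \eta_u := \inf_{t\geq 0} u(t,0) > 0.
\end{equation*}
I plan to obtain these by constructing compactly supported stationary sub-solutions of the bump form $\eta\cos(\pi x/(2R))\mathbf 1_{[-R,R]}$, which are admissible for $v$ (resp.\ $u$) provided $R$ is large enough that $d\pi^2/(4R^2)\leq r(1-b-\eta)$ (resp.\ $\pi^2/(4R^2)\leq 1-a-\eta$) and $\eta$ is small enough to sit below $v_0$ (resp.\ $u_0$) on $[-R,R]$; both conditions can be met because $a,b\in(0,1)$ and $(u_0,v_0)$ is strictly positive. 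With this uniform positivity at $x=0$ in hand, I take the threshold $\lambda^\sharp \geq \max\{\lambda_v^+,\sqrt{rb/d}\}$ and form the \emph{stationary} exponential sub-solution $\tilde v(x) = \min\{c_v, \eta_v\}\, e^{-\lambda^\sharp x/\epsilon}$ on the half-cylinder $\{x\geq 0\}$; the algebraic condition $d(\lambda^\sharp)^2 \geq rb$ is precisely what makes this a sub-solution. Comparison on the half-cylinder (with boundary data $v^\epsilon(t,0)\geq \eta_v$ and initial data below $v^\epsilon(0,\cdot)$, using $\lambda^\sharp \geq \lambda_v^+$) gives $v^\epsilon(t,x)\geq \tilde v(x)$, hence $w_1^\epsilon(t,x) \leq \lambda^\sharp x + Q\epsilon \leq Q'(\lambda_v^+ x_+ + \epsilon)$ for suitably enlarged $Q'$. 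A symmetric construction on $\{x\leq 0\}$ with threshold $\lambda \geq \max\{\lambda_v^-,\sqrt{rb/d}\}$, and the analogous argument with threshold $\lambda \geq \max\{\lambda_u,\sqrt a\}$ for $u$, complete \eqref{estamition3}--\eqref{estamition4}.

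The chief obstacle in my plan is the uniform positivity $\eta_v,\eta_u>0$ used for \eqref{estamition3}--\eqref{estamition4}: this is the step where the strict positivity of the initial data and the weak-competition hypothesis $a,b\in(0,1)$ enter in an essential way, and it is only after this ingredient is in place that the stationary exponential sub-solution on a half-line is meaningful. All remaining steps amount to routine exponential-barrier constructions verified by direct algebraic computation.
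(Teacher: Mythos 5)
Your plan is broadly parallel to the paper's proof (the paper works with the $w$-variables and piecewise linear barriers, while you work directly with $(u^\epsilon,v^\epsilon)$ and exponential barriers; these are log-equivalent), and your proposal to obtain the uniform positivity $\inf_t v(t,0)>0$, $\inf_t u(t,0)>0$ from compactly supported stationary bump sub-solutions is a valid and arguably more elementary alternative to the paper's route through Proposition \ref{prop:1}. However, there is a genuine gap in the step where you conclude $\underline v\le v^\epsilon$ and $\underline u\le u^\epsilon$: you assert that ``pointwise minima of viscosity sub-solutions preserve those properties,'' which is false. The correct operations are: pointwise suprema of sub-solutions are sub-solutions, and pointwise infima of super-solutions are super-solutions. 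Your $\overline v$ (a min of super-solutions) is therefore fine, but $\underline v=\min\{1,\,c_ve^{-(\lambda_v^+x+Qt)/\epsilon},\,c_ve^{(\lambda_v^-x-Qt)/\epsilon}\}$ is a ``tent'' with a concave kink at $x=0$ (the slope jumps from $+\lambda_v^-\underline v/\epsilon$ to $-\lambda_v^+\underline v/\epsilon$), and this is \emph{not} a viscosity sub-solution of the diffusive equation: a smooth test function $\varphi$ touching $\underline v$ from above at the kink with $\partial_x\varphi$ strictly between the one-sided slopes can have $\partial_{xx}\varphi$ arbitrarily negative, so $-\epsilon d\,\partial_{xx}\varphi$ can be arbitrarily large and the sub-solution inequality fails. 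Equivalently, $\partial_{xx}\underline v$ carries a negative Dirac mass at the kink. The same concave-kink issue affects $\underline u=\min\{\theta,\,c_ue^{-(\lambda_ux+Qt)/\epsilon}\}$.

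The fix --- which is precisely what the paper does --- is to compare with each branch separately on a half-line with a boundary condition imposed at $x=0$ (e.g.\ $v^\epsilon\ge c_v e^{-(\lambda_v^+ x + Qt)/\epsilon}$ on $\{x\ge 0\}$ and $v^\epsilon\ge c_v e^{(\lambda_v^-x - Qt)/\epsilon}$ on $\{x\le 0\}$), and then take the minimum of the resulting lower bounds rather than of the candidate sub-solutions. But then the boundary comparison at $x=0$ requires $v^\epsilon(t,0)\ge c_ve^{-Qt/\epsilon}$, which is exactly the uniform positivity $\inf_t v(t,0)>0$ (and analogously $\inf_t u(t,0)>0$) that you deferred to the argument for \eqref{estamition3}--\eqref{estamition4}. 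So that ingredient is not merely a convenience for the time-uniform bounds; it is already required for \eqref{estamition1}--\eqref{estamition2} once the sub-solution step is repaired, and your bump construction should be moved to the front of the proof. With this reorganisation and the half-line comparisons in place, the remainder of your argument goes through.
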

 \begin{proof}
We only prove \eqref{estamition1} and the estimations \eqref{estamition2}-\eqref{estamition4}  follow from a quite similar argument. Since $v^\epsilon\leq 1$, we have $w_1^\epsilon\geq 0$ by definition.  By ${(\rm{H_\lambda})}$,  
 there exist positive constants $C_1$ and $C_2$ such that
$$
 C_2e^{-(\lambda_v^+x_++\lambda_v^- x_-)}\leq v(0,x)\leq C_1e^{-(\lambda_v^+x_++\lambda_v^- x_-)} \quad \text{ for }  x \in \mathbb{R}.
$$
By definition \eqref{eq:w}, we have
\begin{equation}\label{eq:w10}
\lambda_v^+x_++\lambda_v^- x_--\epsilon\log C_1\leq w_1^\epsilon(0,x)\leq\lambda_v^+x_++\lambda_v^- x_--\epsilon\log C_2 .
\end{equation}

Define
$$\overline{z}^\epsilon_1=\lambda_v^+ x+Q(t+\epsilon).$$
We shall choose large $Q$ independent of $\epsilon$ such that
\begin{equation}\label{estimate}
  w_1^\epsilon(t,x)\leq \overline{z}^\epsilon_1 \,\quad\text{ in } [0,\infty)\times [0,\infty).
\end{equation}

 To this end,  observe that $\overline{z}^\epsilon_1$ is a (classical) super-solution of \eqref{eq:epsilonw1} in $(0,\infty)\times (0,\infty)$ provided $Q\geq r$. By \eqref{eq:spreadingly2.1} in Proposition \ref{prop:1}, we find $-\log v(t,0)$ is uniformly bounded in $[0,\infty)$ (since $v(0,x)>0$ in $\mathbb{R}$), so that we may choose
 \begin{equation}\label{Q}
   Q=\max\left\{\sup_{t\in[0,\infty)}[-\log v(t,0)], \,|\log C_2|,\, r\right\},
 \end{equation}
such that
$$w^\epsilon_1(t,0)\leq \overline{z}^\epsilon_1(t,0)\,\quad \text{ for all }t\geq 0, \quad  \,w^\epsilon_1(0,x)\leq \overline{z}^\epsilon_1(0,x)\quad \text{ for all }x\geq 0,$$
where the last inequality is due to \eqref{eq:w10}. By comparison,  \eqref{estimate} thus holds.

By a similar argument, we can verify
$$\overline{z}^\epsilon_2=-\lambda_v^- x+Q(t+\epsilon)$$
is a super-solution of \eqref{eq:epsilonw1} in $(0,\infty)\times (-\infty,0)$, so that
\begin{equation}\label{estimate1}
  w_1^\epsilon(t,x)\leq \overline{z}^\epsilon_2(t,x) \,\quad\text{ in } [0,\infty)\times (-\infty, 0],
\end{equation}
where $Q$ is defined by \eqref{Q}. Combining with \eqref{estimate} and \eqref{estimate1} gives the desired upper bound of $ w_1^\epsilon$.

To obtain the lower bound of $ w_1^\epsilon$, we may define functions
$$\underline{z}^\epsilon_1=\lambda_v^+ x-Q(t+\epsilon)\,\quad \text{ and }\,\quad\underline{z}^\epsilon_2=-\lambda_v^+ x-Q(t+\epsilon).$$
By the same arguments as before, we can check
\begin{equation*}
 w_1^\epsilon(t,x)\geq \underline{z}^\epsilon_1 \,\, \text{ in } [0,\infty)\times \mathbb{R}\,\,\text{ and } \,w_1^\epsilon(t,x)\geq \underline{z}^\epsilon_2 \,\, \text{ in } [0,\infty)\times \mathbb{R},
\end{equation*}
by choosing $Q=\max\left\{ \,|\log C_1|,\, d(\lambda_v^+)^2+d(\lambda_v^-)^2 +r \right\}$.
This completes the proof of \eqref{estamition1}.
\end{proof}

\begin{remark}\label{rmk:w1w20}
According to Lemma \ref{lem:3-1'}, by letting $t=0$ and then $\epsilon \to 0$ in \eqref{estamition1} and \eqref{estamition2}, we deduce that
\begin{equation*}
  w^*_1(0,x)=
\begin{cases}
\lambda^+_v x, &  \text{for }x\in [0,\infty),\\
\lambda^-_v x, &\text{for }x\in (-\infty,0],
\end{cases}
\end{equation*}
and
\begin{equation*}
w^*_2(0,x)=w_{2,*}(0,x)=
\begin{cases}
\lambda_u x, &  \text{for }x\in [0,\infty),\\
0, &\text{for }x\in (-\infty,0].
\end{cases}
\end{equation*}
Similarly, by setting $x=0$ and then $\epsilon \to 0$ in \eqref{estamition3} and \eqref{estamition4}, we have
$$w_1^*(t,0)=w_2^*(t,0)=w_{2,*}(t,0)=0\,\,\text{ for }\,t\geq 0.$$
\end{remark}

\subsection{Estimating $\underline{c}_1$ from below}
By Proposition \ref{prop:1}, $\overline c_2 \leq \sigma_2$, so we deduce
\begin{equation}\label{eq:u_ep}
0\leq \hspace{-.3cm}\limsup_{\scriptsize \begin{array}{c}\epsilon \to 0\\ (t',x') \to (t,x)\end{array}} \hspace{-.5cm}  u^\epsilon(t',x')\leq \chi_{\{x \leq \sigma_2 t\}}.
\end{equation}
\begin{lemma}\label{lem:sub-solutionw1}
Let $(u,v)$ be a solution of \eqref{eq:1-1} with initial data satisfying $\rm{(H_\lambda)}$. Then
\begin{itemize}
\item[{\rm (a)}] $w_1^*$ is a viscosity sub-solution of
\begin{align}\label{eq:sub-solutionw1}
\begin{cases}
\min\{\partial_t w+d|\partial_xw|^2+r(1-b\chi_{\{x\leq \sigma_2 t\}}) ,w\}=0,&\text{ in } (0,\infty)\times(0,\infty),\\
w(0,x)=\lambda_v^+x, & \text{ on }[0,\infty),\\
w(t,0)= 0, &\text{ on }[0,\infty);
\end{cases}
\end{align}
\item[{\rm (b)}] $w_1^*$ is a viscosity sub-solution of
\begin{align}\label{eq:sub-solutionw1'}
\begin{cases}
\min\{\partial_t w+d|\partial_xw|^2+r(1-b\chi_{\{x\leq \sigma_2 t\}}) ,w\}=0,&\text{ in } (0,\infty)\times(-\infty,0),\\
w(0,x)=-\lambda_v^-x, & \text{ on }(-\infty,0],\\
w(t,0)= 0, &\text{ on }[0,\infty),
\end{cases}
\end{align}
\end{itemize}
where $\sigma_2$ is defined by \eqref{eq:sigma} and $\lambda_v^-,\,\lambda_v^+ \in (0,\infty)$ are given in $\mathrm{(H_\lambda)}$.
\end{lemma}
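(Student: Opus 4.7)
The plan is to pass to the limit $\epsilon \to 0$ in the Hamilton--Jacobi--type equation \eqref{eq:epsilonw1} satisfied by $w_1^\epsilon$, using the half-relaxed limit method of Barles--Perthame together with the upper bound \eqref{eq:u_ep} on $u^\epsilon$ that follows from $\overline c_2 \leq \sigma_2$ (Proposition \ref{prop:1}). I will give the argument for part (a); part (b) is identical after reflecting $x \mapsto -x$ and working on $\{x<0\}$.

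For the interior sub-solution condition, fix a smooth test function $\phi$ and suppose $w_1^*-\phi$ has a strict local maximum at $(t_0,x_0)\in(0,\infty)\times(0,\infty)$. Standard half-relaxed limit theory produces sequences $\epsilon_n\to 0$ and $(t_n,x_n)\to(t_0,x_0)$ such that $w_1^{\epsilon_n}-\phi$ attains a local maximum at $(t_n,x_n)$ and $w_1^{\epsilon_n}(t_n,x_n)\to w_1^*(t_0,x_0)$. If $w_1^*(t_0,x_0)=0$, the sub-solution inequality for $\min\{\cdot,w\}$ holds trivially, so assume $w_1^*(t_0,x_0)>0$. Since $v^\epsilon>0$ is a classical solution, $w_1^\epsilon$ is smooth, so the maximum property and \eqref{eq:epsilonw1} yield, at $(t_n,x_n)$,
\begin{equation*}
\partial_t\phi - \epsilon_n d\,\partial_{xx}\phi + d|\partial_x\phi|^2 + r\bigl(1 - bu^{\epsilon_n} - v^{\epsilon_n}\bigr)\le 0.
\end{equation*}
Because $w_1^{\epsilon_n}(t_n,x_n)\to w_1^*(t_0,x_0)>0$, we have $v^{\epsilon_n}(t_n,x_n)=\exp(-w_1^{\epsilon_n}(t_n,x_n)/\epsilon_n)\to 0$, and \eqref{eq:u_ep} together with the upper semicontinuity of $\chi_{\{x\le\sigma_2 t\}}$ (the indicator of a closed set) gives $\limsup_n u^{\epsilon_n}(t_n,x_n)\le \chi_{\{x_0\le\sigma_2 t_0\}}$. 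Sending $n\to\infty$ produces $\partial_t\phi + d|\partial_x\phi|^2 + r(1-b\chi_{\{x_0\le\sigma_2 t_0\}})\le 0$ at $(t_0,x_0)$, which combined with the trivial case delivers the viscosity sub-solution condition.

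For the initial and lateral boundary conditions, Remark \ref{rmk:w1w20} gives $w_1^*(0,x)=\lambda_v^+ x$ on $[0,\infty)$ and $w_1^*(t,0)=0$ for $t\ge 0$, so the sub-solution boundary inequalities $w_1^*\le \lambda_v^+ x$ and $w_1^*\le 0$ hold in fact with equality. The only point requiring real care is the discontinuity of $\chi_{\{x\le\sigma_2 t\}}$ at the interface $\{x=\sigma_2 t\}$, but this is handled by the upper semicontinuity just mentioned: $\limsup_n(-b u^{\epsilon_n}(t_n,x_n))\ge -b\chi_{\{x_0\le\sigma_2 t_0\}}$ whether or not $(t_0,x_0)$ lies on the interface, so no further machinery is needed here. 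The substantive work of extracting information from this sub-solution (comparison against appropriate super-solutions via Theorem \ref{thm:D}) is carried out separately in later sections.
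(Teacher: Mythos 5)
Your proof is correct and follows essentially the same route as the paper's: use the Barles--Perthame half-relaxed limit method to extract a sequence of local maxima of $w_1^\epsilon - \varphi$ converging to the strict local maximum of $w_1^* - \varphi$, exploit the smoothness of $w_1^\epsilon$ (so that \eqref{eq:epsilonw1} holds classically), and pass to the limit using $v^{\epsilon_n}(t_n,x_n)\to 0$ (from $w_1^*(t_0,x_0)>0$) together with $\limsup_n u^{\epsilon_n}(t_n,x_n)\le\chi_{\{x_0\le\sigma_2 t_0\}}$ from \eqref{eq:u_ep}, while the boundary/initial data come from Remark \ref{rmk:w1w20}. The only cosmetic slip is writing $\limsup_n(-bu^{\epsilon_n})$ where you really want $\liminf_n(-bu^{\epsilon_n})=-b\limsup_n u^{\epsilon_n}$ to conclude from the $\le 0$ inequality, but since the displayed bound holds for both, the argument is unaffected.
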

\begin{proof}
First, observe that  $w^*_1$ is upper semicontinuous (usc) by construction. By Remark \ref{rmk:w1w20},
the initial and boundary conditions of \eqref{eq:sub-solutionw1} and \eqref{eq:sub-solutionw1'} are satisfied.


It remains to show that $w^*_1$ is a viscosity sub-solution of $\min\{\partial_t w+d|\partial_x w|^2+r(1-b\chi_{\{x\leq \sigma_2 t\}}) ,w\}=0$ in the domain $(0,\infty)\times \mathbb{R}$.
According to definition of viscosity sub-solution of  Hamilton-Jacobi equation, (see Appendix \ref{SD}), 
let $ \varphi \in C^\infty((0,\infty)\times\mathbb{R})$ and let $(t_0,x_0)$ be a strict local maximum point of $w_1^*-\varphi$ such that $w_1^*(t_0,x_0)>0$. By passing to a sequence $\epsilon = \epsilon_k$ if necessary, $ w_1^\epsilon-\varphi$ has a local maximum point at $(t_\epsilon,x_\epsilon)$ such that $w_1^\epsilon(t_\epsilon,x_\epsilon) \to w_1^*(t_0,x_0)$ and  $(t_\epsilon,x_\epsilon)\to (t_0,x_0)$ uniformly as $\epsilon\to 0$.
 At the point $(t_\epsilon, x_\epsilon)$, we have
\begin{align*}
\epsilon d\partial_{xx}\varphi \geq \epsilon d\partial_{xx} w_1^\epsilon &= \partial_t w_1^\epsilon+d|\partial_x w_1^\epsilon|^2+r(1-bu^\epsilon -e^{-\frac{w_1^\epsilon}{\epsilon}})\\
&= \partial_t \varphi+d|\partial_x \varphi|^2+r(1-bu^\epsilon -e^{-\frac{w_1^\epsilon}{\epsilon}}).
\end{align*}
By the fact that $e^{-w_1^\epsilon(t_\epsilon,x_\epsilon)/\epsilon}\to 0$ (as $w_1^\epsilon(t_\epsilon,x_\epsilon) \to w_1^*(t_0,x_0)>0$), we may pass to the limit $\epsilon = \epsilon_k \to 0$ so that
$$
0 \geq \partial_t \varphi(t_0,x_0)+d|\partial_x \varphi(t_0,x_0)|^2+r(1-b\chi_{\{(t,x):\,x \leq \sigma_2 t\} (t_0,x_0)}-0).
$$
Hence $w_1^*$ is a viscosity sub-solution of \eqref{eq:sub-solutionw1} and \eqref{eq:sub-solutionw1'}.
\end{proof}

\begin{lemma}\label{lem:underlinec1}
Let $(u,v)$ be a solution of \eqref{eq:1-1} with initial data satisfying $\rm{(H_\lambda)}$.
Then
$$\underline{c}_1\geq \sigma_1,$$
where $\sigma_1$ is defined by \eqref{eq:sigma}.
\end{lemma}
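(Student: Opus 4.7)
The plan is to prove $\underline c_1 \geq \sigma_1$ by combining the Hamilton--Jacobi comparison framework with Lemma \ref{lem:underlinec}(b). It will be enough to verify, for every $c \in (\sigma_2, \sigma_1)$, that $\liminf_{t\to\infty}\inf_{ct - 1 < x < ct} v(t,x) > 0$; taking the supremum over such $c$ will then yield the claim. To get there, I will first show that $w_1^*(t,x) = 0$ throughout the invasion cone $\mathcal{C} := \{(t,x): t>0,\; 0 < x < \sigma_1 t\}$, which, together with $w_1^\epsilon \geq 0$, will force $w_1^\epsilon \to 0$ locally uniformly on $\mathcal{C}$. Combining this with the bound $u^\epsilon \to 0$ on compact subsets of $\{x > \sigma_2 t\}$---a consequence of $\overline c_2 \leq \sigma_2$ (Proposition \ref{prop:1}(i)) via \eqref{eq:u_ep}---Lemma \ref{lem:underlinec}(b) will deliver $\liminf_{\epsilon \to 0}\inf_K v^\epsilon \geq 1$ on any compact $K \subset \{\sigma_2 t < x < \sigma_1 t\}$; undoing the scaling \eqref{scaling} then gives the required positivity.

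The crux is the construction of a viscosity super-solution $\overline w_1$ of \eqref{eq:sub-solutionw1} whose zero set is exactly $\{x \leq \sigma_1 t\}$ and that dominates $w_1^*$ on the parabolic boundary. Set $\lambda^\ast := \lambda_v^+ \wedge \sqrt{r/d}$, so that $\sigma_1 = d\lambda^\ast + r/\lambda^\ast$. In the ``pulled'' regime $\lambda_v^+ \leq \sqrt{r/d}$, I will take
\[
\overline w_1(t,x) = \lambda_v^+(x - \sigma_1 t)_+.
\]
Where this is positive, $x > \sigma_1 t > \sigma_2 t$ (since $\sigma_1 > \sigma_2$ by hypothesis), so the indicator term drops out, and a direct computation yields $\partial_t \overline w_1 + d|\partial_x \overline w_1|^2 + r = -\lambda_v^+ \sigma_1 + d(\lambda_v^+)^2 + r = 0$; non-negativity of $\overline w_1$ handles the $\min$ constraint automatically. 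The initial data $\overline w_1(0,x) = \lambda_v^+ x$ and boundary data $\overline w_1(t,0) = 0$ coincide with the values of $w_1^*$ given in Remark \ref{rmk:w1w20}, so the comparison principle (Theorem \ref{thm:D}) will give $w_1^* \leq \overline w_1$, which vanishes on $\{x \leq \sigma_1 t\}$.

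The ``pushed'' regime $\lambda_v^+ > \sqrt{r/d}$ will be the main technical hurdle: the linear super-solution of slope $\lambda_v^+$ propagates at $d\lambda_v^+ + r/\lambda_v^+ > \sigma_1$ (too fast), whereas the linear candidate of slope $\sqrt{r/d}$ has the correct speed $\sigma_1 = 2\sqrt{rd}$ but an initial trace $\sqrt{r/d}\,x$ strictly below $w_1^*(0,x) = \lambda_v^+ x$. To absorb this mismatch I will employ the self-similar super-solution
\[
\overline w_1(t,x) = \left(\frac{x^2}{4dt} - rt\right)_+,
\]
which also satisfies $\partial_t \overline w_1 + d|\partial_x \overline w_1|^2 + r = 0$ wherever positive, has zero set precisely $\{x \leq 2\sqrt{rd}\,t\} = \{x \leq \sigma_1 t\}$, vanishes at $x = 0$ for $t > 0$, and diverges to $+\infty$ as $t \to 0^+$ for every $x > 0$, thereby dominating the initial trace of $w_1^*$ in the upper semi-continuous envelope. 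Theorem \ref{thm:D} will again yield $w_1^* \leq \overline w_1$, hence $w_1^* \equiv 0$ on $\mathcal{C}$, completing the argument.
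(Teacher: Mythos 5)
Your proposal follows the same framework as the paper: scale, take half-relaxed limits, build a super-solution of \eqref{eq:sub-solutionw1}, invoke Theorem \ref{thm:D}, and un-scale via Lemma \ref{lem:underlinec}(b). In the pulled regime $\lambda_v^+\leq\sqrt{r/d}$ your $\overline w_1$ is the paper's. The interesting deviation is the pushed regime, where you propose the pure parabolic barrier $\left(\tfrac{x^2}{4dt}-rt\right)_+$ rather than the paper's construction, which caps the parabola with the linear tail $\lambda_v^+\bigl(x-(d\lambda_v^++\tfrac{r}{\lambda_v^+})t\bigr)$ for $x/t>2d\lambda_v^+$. The paper's patched function is $C^1$ across $x/t = 2d\lambda_v^+$ and, crucially, has \emph{finite} initial trace $\overline w_1(0^+,x)=\lambda_v^+ x$, matching $w_1^*(0,x)$ exactly so that Theorem \ref{thm:D} applies verbatim. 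Your barrier is simpler to describe but blows up as $t\to0^+$ for fixed $x>0$; you must then justify that the comparison principle tolerates a super-solution taking the value $+\infty$ on part of $\partial_p\Omega$ (the function is still lsc, and the proof of Theorem \ref{thm:D} probably does go through since $W=-\infty$ wherever $\overline w=+\infty$, but this is not covered by the theorem as stated and should be argued, or the linear cap should be restored). Note also the parenthetical ``upper semi-continuous envelope'' should be ``lower semi-continuous envelope'': super-solutions are extended by lsc regularization.

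A second, more substantive slip: you assert that ``non-negativity of $\overline w_1$ handles the $\min$ constraint automatically.'' For a viscosity super-solution of $\min\{\partial_t w + d|\partial_x w|^2 + r(1-b\chi), w\}=0$ in the sense of Appendix \ref{SD}, one needs \emph{both} $w\geq 0$ and the inequality $\partial_t\varphi+H^*(\cdot,\partial_x\varphi)\geq 0$ at every local minimum of $w-\varphi$, including at the junction $x/t=\sigma_1$ where your barrier fails to be $C^1$. The paper checks this: a test function touching from below along $\{x=\sigma_1 t\}$ must satisfy $\partial_t\varphi+\sigma_1\partial_x\varphi=0$ there, whence, using $\sigma_1=2\sqrt{dr}>\sigma_2$ so the indicator drops out,
$$
\partial_t\varphi+d|\partial_x\varphi|^2+r=-2\sqrt{dr}\,\partial_x\varphi+d|\partial_x\varphi|^2+r=\bigl(\sqrt{d}\,\partial_x\varphi-\sqrt{r}\bigr)^2\geq 0.
$$
Your construction does satisfy this, but the verification is not ``automatic'' from non-negativity, and it should appear in the proof. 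With the linear cap restored and the junction check included, the argument is sound and essentially identical to the paper's.
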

\begin{proof}
Define the function $\overline{w}_1: [0,\infty) \times [0,\infty) \to [0,\infty)$ by
\begin{equation*}
\begin{split}
\overline w_1(t,x) =
\left\{
\begin{array}{ll}
\medskip
\lambda_v^+(x-(d\lambda_v^++\frac{r}{\lambda_v^+})t), & \text{for}~ \frac{x}{t}>d\lambda_v^++\frac{r}{\lambda_v^+},\\
0, &  \text{for}~0\leq \frac{x}{t}\leq d\lambda_v^++\frac{r}{\lambda_v^+},\\
\end{array}
\right.
\end{split}
\end{equation*}
when $ \lambda_v^+ \leq \sqrt{\frac{r}{d}}$, and by
\begin{equation*}
\begin{split}
\overline w_1(t,x) =
\left\{
\begin{array}{ll}
\medskip
\lambda_v^+(x-(d\lambda_v^++\frac{r}{\lambda_v^+})t), & \text{for}~ \frac{x}{t}> 2d\lambda_v^+,\\
\medskip
\frac{t}{4d}(\frac{x^2}{t^2}-4dr), &\text{for}~ 2\sqrt{dr}< \frac{x}{t}\leq 2d\lambda_v^+,\\
0, & \text{for}~0\leq \frac{x}{t}\leq 2\sqrt{dr},\\
\end{array}
\right.
\end{split}
\end{equation*}
when $ \lambda_v^+ > \sqrt{\frac{r}{d}}$.

By construction, $\overline{w}_1$ is continuous in $[0,\infty)\times [0,\infty)$. Next, we claim that the continuous $\overline{w}_1$ is a viscosity super-solution of \eqref{eq:sub-solutionw1}. We will check the latter case of $ \lambda_v^+ > \sqrt{\frac{r}{d}}$ as the former case can be verified analogously.
%
%
%
Under the condition $ \lambda_v^+ > \sqrt{\frac{r}{d}}$, we have $\sigma_1=2\sqrt{dr}$.
According to definition of viscosity super-solution of Hamilton-Jacobi equation (see Appendix \ref{SD}), let $\varphi \in C^\infty((0,\infty)\times\mathbb{R})$ and let $(t_0,x_0)$ be a strict local minimum point of $\overline w_1-\varphi$. 

If $x_0/t_0 \neq 2\sqrt{dr}$, then $\overline{w}_1$ is a classical solution of \eqref{eq:sub-solutionw1}.

If $x_0/t_0 = 2\sqrt{dr}$, then $\overline{w}_1(t_0,x_0) = 0$ by definition. Moreover,
$$
 -\varphi(t, 2\sqrt{dr} t) = (\overline{w}_1 -\varphi)(t, 2\sqrt{dr} t)  \geq (\overline{w}_1 -\varphi)(t_0,x_0) = - \varphi(t_0,x_0) \quad \text{ for }t \approx t_0,$$ and we must have $\partial_t \varphi(t_0,x_0)+2\sqrt{dr} \partial_x \varphi(t_0,x_0)) = 0$, and hence
\begin{align*}
&\quad \partial_t \varphi(t_0,x_0)+ d|\partial_x \varphi(t_0,x_0)|^2 + r(1-b \chi_{\{(t,x):\,x \leq \sigma_2 t\}} (t_0,x_0))\\
&= -2\sqrt{dr} \partial_x \varphi(t_0,x_0)+ d|\partial_x \varphi(t_0,x_0)|^2  + r\\
&= \left(\sqrt{d}\partial_x \varphi(t_0,x_0) - \sqrt{r}\right)^2 \geq 0,
\end{align*}
where the first equality follows from the fact that $x_0/t_0 = 2\sqrt{dr}=\sigma_1 > \sigma_2$.

By Remark \ref{rmk:w1w20} and the expression of $\overline{w}_1$, we have
$$
\overline{w}_1(t,x) = \lambda^+_v x=w_1^*(t,x) \,\,\text{ on } \partial [(0,\infty) \times (0,\infty)].
$$
And recalling Lemma \ref{lem:sub-solutionw1}(a), $\overline{w}_1$ and $w_1^*$ is a pair of viscosity super and sub-solutions of \eqref{eq:sub-solutionw1}. Then, we may apply Theorem \ref{thm:D} to get 
$$0 \leq w^*_1 \leq \overline{w}_1 \quad \quad \text{ in } [0,\infty)\times[0,\infty),$$
which implies that
$$\{(t,x):w_1^*(t,x)=0\}\supset\{(t,x):\overline w_1(t,x)=0\}= \{(t,x):0\leq x\leq  \sigma_1t\}.$$
Letting $\epsilon \to 0$, we arrive at
$$w_1^\epsilon(t,x) = -\epsilon \log{v^\epsilon(t,x)} \to 0 \text{ locally uniformly on } \{(t,x):0\leq x < \sigma_1t\}.$$
Hence for each small $\eta>0$, by choosing the compact sets $K=\{(1,x): \eta \leq x \leq \sigma_1 - \eta \}$ and $K'=\{(1,x): \frac{\eta}{2} \leq x \leq \sigma_1 - \frac{\eta}{2} \}$, we may apply Lemma \ref{lem:underlinec}\rm{(b)} to deduce that
\begin{equation}\label{eq:asymptoticvc1'}
\liminf_{t\to \infty}\inf_{\eta  t< x < (\sigma_1 - \eta) t} v(t,x)=\liminf_{\epsilon\to 0}\inf_{K} v^\epsilon (t,x)\geq \frac{1-b}{2}>0. 
\end{equation}
This implies $\underline{c}_1\geq \sigma_1$.
\end{proof}

\begin{corollary}\label{cor:underlinec1}
 Let $\sigma_1 > \sigma_2$ and let $(u,v)$ be a solution of \eqref{eq:1-1} with initial data satisfying $\rm{(H_\lambda)}$.  Then for each small $\eta>0$,
\begin{subequations}
\begin{align}
&\lim\limits_{t\to\infty}\sup\limits_{x>(\sigma_1+\eta)t} (|u|+|v|)=0,\label{eq:spreadc1_a} \\
&\lim\limits_{t\to\infty}\sup\limits_{(\overline{c}_2+\eta)t<x<(\sigma_1-\eta)t} (|u|+|v-1|)=0, \label{eq:spreadc1_b}
\end{align}
\end{subequations}
where $\sigma_1$ is defined by \eqref{eq:sigma}.
\end{corollary}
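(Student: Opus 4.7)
The plan is to deduce both assertions by combining Proposition \ref{prop:1}, Lemma \ref{lem:underlinec1}, and a standard parabolic Liouville argument for the $v$-equation. The assertion \eqref{eq:spreadc1_a} is nearly immediate: fix $\eta > 0$ small. Since $\sigma_1 > \sigma_2$, the region $\{x > (\sigma_1+\eta)t\}$ is contained in $\{x > (\sigma_2 + \eta)t\}$, so $\overline{c}_2 \leq \sigma_2$ from Proposition \ref{prop:1}(i) yields $\sup_{x > (\sigma_1+\eta)t} u(t,x) \to 0$, while $\overline{c}_1 \leq \sigma_1$ directly gives $\sup_{x > (\sigma_1+\eta)t} v(t,x) \to 0$.

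For \eqref{eq:spreadc1_b}, the decay $\sup u(t,x) \to 0$ on $\{(\overline{c}_2 + \eta)t < x < (\sigma_1 - \eta)t\}$ follows directly from the definition of $\overline{c}_2$, so it suffices to establish $v \to 1$ uniformly on this region. The upper bound $v(t,x) \leq 1$ is essentially free: since $v_0 \leq 1$ by $(\mathrm{H}_\lambda)$ and $u \geq 0$, the constant $1$ is a super-solution of the $v$-equation (the reaction reduces to $-rbu \leq 0$ at $v = 1$), so comparison gives $v \leq 1$ globally. For the matching lower bound, estimate \eqref{eq:asymptoticvc1'} from the proof of Lemma \ref{lem:underlinec1} supplies
$$
\liminf_{t\to\infty} \inf_{\eta t < x < (\sigma_1-\eta)t} v(t,x) \geq \frac{1-b}{2} > 0,
$$
and since $\overline{c}_2 + \eta \geq \eta$, this bound persists on the narrower region of interest.

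To upgrade the two-sided bound $v \in [\tfrac{1-b}{2}, 1]$ to the convergence $v \to 1$, I would argue by contradiction using parabolic compactness. Suppose $v(t_n, x_n) \leq 1 - \delta$ for some $\delta > 0$ along a sequence with $t_n \to \infty$ and $(\overline{c}_2 + \eta)t_n < x_n < (\sigma_1 - \eta) t_n$. Setting $(\tilde u_n, \tilde v_n)(t,x) := (u,v)(t + t_n, x + x_n)$, interior parabolic estimates extract a subsequential locally uniform limit $(\tilde u_\infty, \tilde v_\infty)$ on $\mathbb{R}\times \mathbb{R}$. The uniform decay of $u$ on the region forces $\tilde u_\infty \equiv 0$, so $\tilde v_\infty$ is an entire solution of the scalar Fisher-KPP equation $\partial_t \tilde v - d\partial_{xx} \tilde v = r\tilde v(1-\tilde v)$ taking values in $[\tfrac{1-b}{2}, 1]$ with $\tilde v_\infty(0,0) \leq 1 - \delta$. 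The classical Liouville property for Fisher-KPP, which I would invoke via Lemma \ref{lem:entire1}, forces $\tilde v_\infty \equiv 1$, producing the desired contradiction.

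The main obstacle is precisely this Liouville step; everything prior is a direct assembly of already-established estimates, together with the observation that the upper bound $v \leq 1$ comes for free. I expect the Liouville input to appear as one of the items in the appendix Lemma \ref{lem:entire1}, used analogously to how items (a) and (c) are invoked in the final line of the proof of Proposition \ref{prop:1}.
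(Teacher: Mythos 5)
Your proposal is correct and follows essentially the same route as the paper: the paper likewise derives \eqref{eq:spreadc1_a} from $\overline{c}_1\le\sigma_1$ and $\overline{c}_2\le\sigma_2<\sigma_1$, and derives \eqref{eq:spreadc1_b} by combining the positive lower bound \eqref{eq:asymptoticvc1'} on $v$, the decay of $u$ from the definition of $\overline{c}_2$, and a direct application of Lemma \ref{lem:entire1}(d). The parabolic-compactness/Liouville argument you sketch for the upgrade $v\to 1$ is precisely the content of Lemma \ref{lem:entire1}(d) as proved in the appendix, so your outline unpacks rather than replaces the paper's step.
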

\begin{proof}
 By definition, $\underline{c}_1 \leq \overline{c}_1$. It follows from Proposition \ref{prop:1} and Lemma \ref{lem:underlinec1} that  $\sigma_1 \leq \underline{c}_1\leq \overline{c}_1 \leq \sigma_1$. Hence, $\underline{c}_1=\overline{c}_1=\sigma_1$. By Proposition \ref{prop:1}(i), $\overline{c}_2 \leq \sigma_2 < \sigma_1$, so that \eqref{eq:spreadc1_a} holds.
In view of \eqref{eq:asymptoticvc1'} and  definition of $\overline{c}_2$, we have, for each small $\eta>0$,
$$
\liminf_{t\to \infty}\inf_{\eta  t< x < (\sigma_1 - \eta) t} v(t,x)>0,\quad \text{ and }\quad
\lim_{t\to\infty} \sup_{x > (\overline{c}_2 + \eta)t} u = 0.
$$
We may then apply Lemma \ref{lem:entire1}{\rm{(d)}} to deduce \eqref{eq:spreadc1_b}.
\end{proof}

\subsection{Estimating $\underline c_2$ from below}
By Corollary \ref{cor:underlinec1}, we have
\begin{equation}\label{eq:v_ep}
\chi_{\{\sigma_2 t<x<\sigma_1t\}}\leq\hspace{-.5cm}\liminf_{\scriptsize \begin{array}{c}\epsilon \to 0\\ (t',x') \to (t,x)\end{array}} \hspace{-.5cm}  v^\epsilon(t',x')\leq\hspace{-.5cm}\limsup_{\scriptsize \begin{array}{c}\epsilon \to 0\\ (t',x') \to (t,x)\end{array}} \hspace{-.5cm}  v^\epsilon(t',x')\leq \chi_{\{x \leq \sigma_1 t\}}.
\end{equation}
\begin{lemma}\label{lem:sub-solutionw2}
Let $( u, v)$ be a solution of \eqref{eq:1-1} with initial data satisfying $\rm{(H_\lambda)}$. Then, $w_2^*$ 
  is a viscosity  sub-solution of
\begin{align}\label{eq:sub-solutionw2}
\left \{
\begin{array}{ll}
\min\{\partial_t w+|\partial_xw|^2+1-a\chi_{\{x\leq \sigma_1 t\}} ,w\}=0,&\text{ in } (0,\infty)\times\mathbb{R},\\
w(0,x)=\lambda_u\max\{x, 0\}, & \text{ on }\mathbb{R},
\end{array}
\right.
\end{align}
where $\sigma_1$ is defined by \eqref{eq:sigma} and $\lambda_u >0$ is given in $\mathrm{(H_\lambda)}$.
\end{lemma}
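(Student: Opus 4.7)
The plan is to follow the same viscosity-solution template used in the proof of Lemma \ref{lem:sub-solutionw1}, now invoking the upper bound on $v^\epsilon$ encoded in \eqref{eq:v_ep} in place of the $u^\epsilon$ bound \eqref{eq:u_ep}. By construction, $w_2^*$ is upper semicontinuous on $[0,\infty)\times\mathbb{R}$. The initial condition $w_2^*(0,x)=\lambda_u\max\{x,0\}$ is exactly what Remark \ref{rmk:w1w20} provides, so it remains to verify that $w_2^*$ satisfies the PDE in the viscosity sense on $(0,\infty)\times\mathbb{R}$.

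To verify the sub-solution property, I would fix $\varphi\in C^\infty((0,\infty)\times\mathbb{R})$ and a strict local maximum point $(t_0,x_0)$ of $w_2^*-\varphi$. Since $w_2^*\geq 0$ (because $u^\epsilon\leq 1$ implies $w_2^\epsilon\geq 0$), we immediately have $\min\{\,\cdot\,,w_2^*(t_0,x_0)\}\leq 0$ whenever $w_2^*(t_0,x_0)=0$, so the only nontrivial case is $w_2^*(t_0,x_0)>0$. By the standard perturbed-optimization argument, along a subsequence $\epsilon=\epsilon_k\to 0$ there exist local maximum points $(t_\epsilon,x_\epsilon)$ of $w_2^\epsilon-\varphi$ with $(t_\epsilon,x_\epsilon)\to(t_0,x_0)$ and $w_2^\epsilon(t_\epsilon,x_\epsilon)\to w_2^*(t_0,x_0)$. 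Using $\partial_t w_2^\epsilon=\partial_t\varphi$, $\partial_x w_2^\epsilon=\partial_x\varphi$ and $\partial_{xx} w_2^\epsilon\leq\partial_{xx}\varphi$ at $(t_\epsilon,x_\epsilon)$, equation \eqref{eq:epsilonw2} gives
\[
\epsilon\,\partial_{xx}\varphi\;\geq\;\partial_t\varphi+|\partial_x\varphi|^2+1-u^\epsilon(t_\epsilon,x_\epsilon)-a\,v^\epsilon(t_\epsilon,x_\epsilon).
\]

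The assumption $w_2^*(t_0,x_0)>0$ together with $w_2^\epsilon(t_\epsilon,x_\epsilon)\to w_2^*(t_0,x_0)$ yields $u^\epsilon(t_\epsilon,x_\epsilon)=\exp(-w_2^\epsilon/\epsilon)\to 0$. The key step is the upper bound on the $v^\epsilon$ term: by \eqref{eq:v_ep} and the upper semicontinuity of $(t,x)\mapsto\chi_{\{x\leq\sigma_1 t\}}$ (since $\{(t,x):x\leq\sigma_1 t\}$ is closed in $(0,\infty)\times\mathbb{R}$), one has
\[
\limsup_{\epsilon\to 0} v^\epsilon(t_\epsilon,x_\epsilon)\;\leq\;\chi_{\{x\leq\sigma_1 t\}}(t_0,x_0).
\]
Passing to the limit $\epsilon=\epsilon_k\to 0$ in the displayed inequality therefore gives
\[
0\;\geq\;\partial_t\varphi(t_0,x_0)+|\partial_x\varphi(t_0,x_0)|^2+1-a\,\chi_{\{x\leq\sigma_1 t\}}(t_0,x_0),
\]
which, combined with $w_2^*(t_0,x_0)>0$, delivers $\min\{\partial_t\varphi+|\partial_x\varphi|^2+1-a\chi_{\{x\leq\sigma_1 t\}},\,w_2^*\}\leq 0$ at $(t_0,x_0)$.

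The main obstacle is the discontinuity of the Hamiltonian along the line $\{x=\sigma_1 t\}$, which would normally obstruct passage to the limit. It is resolved precisely because $\chi_{\{x\leq\sigma_1 t\}}$ is upper semicontinuous and the $a v^\epsilon$ term enters \eqref{eq:epsilonw2} with the correct sign for a sub-solution, so the half-relaxed $\limsup$ on $v^\epsilon$ produces the \emph{larger} indicator value, as required for an inequality of sub-solution type. No splitting of the domain into $x>0$ and $x<0$ as in Lemma \ref{lem:sub-solutionw1} is necessary here, because the initial datum $\lambda_u\max\{x,0\}$ is already continuous on all of $\mathbb{R}$ and the PDE is posed on the full half-space $(0,\infty)\times\mathbb{R}$.
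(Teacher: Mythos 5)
Your proof is correct and is essentially the argument the paper has in mind: its stated proof of Lemma \ref{lem:sub-solutionw2} is literally the one-line remark that the argument is analogous to that of Lemma \ref{lem:sub-solutionw1}, and you have carried out that analogy faithfully, including the correct use of \eqref{eq:v_ep} in place of \eqref{eq:u_ep}, the observation that $u^\epsilon(t_\epsilon,x_\epsilon)=e^{-w_2^\epsilon/\epsilon}\to 0$ when $w_2^*(t_0,x_0)>0$, and the sign bookkeeping that shows the half-relaxed $\limsup$ of $v^\epsilon$ yields the lsc envelope $H_*$ on the correct side of the inequality. Your remark that the domain split of Lemma \ref{lem:sub-solutionw1} is unnecessary here (because $\lambda_u\max\{x,0\}$ is a single continuous initial datum on $\mathbb{R}$) is also accurate. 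One minor nit: invoking the upper semicontinuity of $\chi_{\{x\le\sigma_1 t\}}$ as an extra ingredient is redundant, since \eqref{eq:v_ep} is already phrased as a bound on the half-relaxed $\limsup$ and thus directly delivers $\limsup_{\epsilon\to 0} v^\epsilon(t_\epsilon,x_\epsilon)\le\chi_{\{x\le\sigma_1 t\}}(t_0,x_0)$ for any $(t_\epsilon,x_\epsilon)\to(t_0,x_0)$; the usc of the indicator is what makes \eqref{eq:v_ep} consistent, but you do not need to re-derive it.
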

\begin{proof}
The proof is analogous to Lemma \ref{lem:sub-solutionw1} and we omit the details.
\end{proof}

\begin{lemma}\label{lem:underlinec2}
Let $(u,v)$ be a solution of \eqref{eq:1-1} with initial data satisfying $\rm{(H_\lambda)}$. Then
$$\underline c_2\geq \hat{c}_{\rm nlp},$$
where $\hat{c}_{\rm nlp}$ is defined in Theorem \ref{thm:1-2}. 
\end{lemma}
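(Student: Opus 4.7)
The plan is to construct an explicit piecewise-linear function $\overline w_2(t,x)$ on $[0,\infty) \times [0,\infty)$ that is (i) a viscosity super-solution of \eqref{eq:sub-solutionw2} and (ii) satisfies $\{\overline w_2 = 0\} = \{(t,x) : 0 \leq x \leq \hat c_{\rm nlp} t\}$. By Lemma \ref{lem:sub-solutionw2} together with Remark \ref{rmk:w1w20}, $w_2^*$ is a viscosity sub-solution of \eqref{eq:sub-solutionw2} with matching initial/boundary data, so the comparison principle for piecewise Lipschitz Hamiltonians (Theorem \ref{thm:D}) gives $w_2^* \leq \overline w_2$. Combined with the trivial bound $w_2^* \geq 0$, this forces $w_2^\epsilon \to 0$ locally uniformly on the open wedge $\{(t,x) : t>0,\; 0 < x < \hat c_{\rm nlp} t\}$.

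The conclusion then follows in a standard way. For each small $\eta > 0$, I choose compact sets $K \subset \mathrm{Int}\, K' \subset K' \subset \{(t,x) : \eta t < x < (\hat c_{\rm nlp} - \eta) t\}$ and apply Lemma \ref{lem:underlinec}(a) together with the trivial bound $v^\epsilon \leq 1$ to deduce
$$
\liminf_{\epsilon \to 0} \inf_K u^\epsilon \;\geq\; 1 - a \limsup_{\epsilon \to 0} \sup_{K'} v^\epsilon \;\geq\; 1 - a \;>\; 0.
$$
Undoing the scaling \eqref{scaling} and letting $\eta \to 0^+$ yields $\underline c_2 \geq \hat c_{\rm nlp}$.

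The principal obstacle, and the source of the three cases in \eqref{eq:hcacc}, is the explicit construction of $\overline w_2$. The natural ansatz is piecewise linear with two breaklines: the interface $x = \sigma_1 t$ where the Hamiltonian in \eqref{eq:sub-solutionw2} jumps by $a$, and the level line $x = \hat c_{\rm nlp} t$ below which $\overline w_2 \equiv 0$. In the outer wedge $\{x > \sigma_1 t\}$, the ansatz $\overline w_2 = \alpha x - (\alpha^2 + 1) t$ solves $\partial_t w + |\partial_x w|^2 + 1 = 0$ classically and is compatible with the initial profile $\lambda_u x_+$. In the intermediate wedge $\{\hat c_{\rm nlp} t < x < \sigma_1 t\}$, the ansatz $\overline w_2 = \mu(x - \hat c_{\rm nlp} t)$ solves $\partial_t w + |\partial_x w|^2 + (1-a) = 0$ provided $\mu^2 - \mu \hat c_{\rm nlp} + (1-a) = 0$. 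Enforcing continuity of $\overline w_2$ across $x = \sigma_1 t$ then fixes the triple $(\alpha, \mu, \hat c_{\rm nlp})$: case 2 of \eqref{eq:hcacc} corresponds to $\alpha = \lambda_u$ with $\mu = \tilde\lambda_{\rm nlp}$ as in \eqref{eq:lambdaacc}; case 1 arises when $\lambda_u > \sigma_1/2$, where the outer slope saturates at $\alpha = \sigma_1/2 - \sqrt{a}$; and case 3 is the KPP-pulled regime $\hat c_{\rm nlp} = 2\sqrt{1-a}$ beyond which the quadratic in $\mu$ has no real root with $\mu \leq \sqrt{1-a}$. The super-solution property is classical in the interior of each wedge; at the kink $x = \hat c_{\rm nlp} t$ the min in \eqref{eq:sub-solutionw2} is active since $\overline w_2 = 0$; and at the interface $x = \sigma_1 t$ it reduces to verifying the slope inequalities within the framework of Theorem \ref{thm:D}.
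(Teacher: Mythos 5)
Your overall strategy — construct a super-solution $\overline w_2$ with $\{\overline w_2 = 0\} = \{x \le \hat c_{\rm nlp}\,t\}$, apply Theorem \ref{thm:D} to deduce $w_2^* \le \overline w_2$, then invoke Lemma \ref{lem:underlinec}(a) — is exactly the paper's strategy for cases (a) and (b) of Lemma \ref{lem:underlinec2}. But there are two genuine gaps in the details.

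First, in case 1 of \eqref{eq:hcacc} (i.e.\ $\sigma_1 < 2\lambda_u$), the super-solution cannot be taken piecewise linear with a single outer piece of slope $\alpha = \tfrac{\sigma_1}{2}-\sqrt a$. Since $\tfrac{\sigma_1}{2}-\sqrt a < \lambda_u$, such a piece gives $\overline w_2(0,x) = (\tfrac{\sigma_1}{2}-\sqrt a)x < \lambda_u x = w_2^*(0,x)$ for $x>0$, so the boundary ordering needed for Theorem \ref{thm:D} fails. The paper's $\overline w_2$ in this case has \emph{four} pieces, including a parabolic piece $\frac{t}{4}\left(\frac{x^2}{t^2}-4\right)$ on $\{\sigma_1 \le x/t < 2\lambda_u\}$ that interpolates between slope $\lambda_u$ at $x/t = 2\lambda_u$ (matching the initial data) and slope $\sigma_1/2$ at $x/t = \sigma_1$ (where the inner linear piece takes over). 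Your proposal drops this piece, and a purely piecewise-linear ansatz cannot replicate it while keeping the initial condition $\overline w_2(0,\cdot)\geq\lambda_u(\cdot)_+$ and the super-solution inequality on the outer wedge.

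Second, you do not actually give a construction for case 3. You note that beyond $\hat c_{\rm nlp}=2\sqrt{1-a}$ the quadratic in $\mu$ has no admissible root, but that is precisely why the super-solution approach does not directly apply there. The paper handles case (c) by a completely different and simpler argument that bypasses the Hamilton--Jacobi machinery: Proposition \ref{prop:1}(ii) already yields $\underline c_2 \geq c_{\rm LLW}$, and Theorem \ref{thm:LLW} gives $c_{\rm LLW} \geq 2\sqrt{1-a} = \hat c_{\rm nlp}$, so the conclusion is immediate. Your proposal needs to invoke this (or an equivalent substitute) to close case 3.

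A minor remark: your lower bound $\liminf_\epsilon \inf_K u^\epsilon \geq 1-a$ is fine (the paper states $\geq (1-a)/2$, which is weaker but also suffices); what matters is strict positivity.
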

\begin{proof}
According to definition of  $\hat{c}_{\rm nlp}$ in Theorem \ref{thm:1-2}, we consider the three cases separately: (a) $\sigma_1 < 2\lambda_u$ and $\sigma_1 < 2(\sqrt a + \sqrt{1-a})$; (b) $\sigma_1 \geq 2\lambda_u$  and $\tilde\lambda_{\rm nlp}\leq \sqrt{1-a}$; (c) otherwise.

First, we claim that we are done in case (c). Since in that case $\hat{c}_{\rm nlp} = 2\sqrt{1-a}$, and, according to Proposition \ref{prop:1}(ii), $\underline{c}_2 \geq c_{\rm LLW}$ where $c_{\rm LLW} \geq 2\sqrt{1-a}$ by Theorem \ref{thm:LLW}. Thus
$$
\underline{c}_2 \geq c_{\rm LLW} \geq 2\sqrt{1-a} = \hat{c}_{\rm nlp}.
$$
It remains to consider cases (a) and (b). We start by defining
$$\bar{c}_{\rm nlp} = \frac{\sigma_1}{2} - \sqrt{a} + \frac{1-a}{\frac{\sigma_1}{2} - \sqrt{a}}$$
 and
\begin{equation}\label{eq:tlambdanlp}
\tilde{c}_{\rm nlp} = \tilde\lambda_{\rm nlp} + \frac{1-a}{\tilde\lambda_{\rm nlp}} \quad \text{ where } \quad \tilde\lambda_{\rm nlp}= \frac{1}{2}\left(\sigma_1 -\sqrt{(\sigma_1-2\lambda_u)^2 + 4a}\right).
\end{equation}

Suppose case (a) holds, then $\hat{c}_{\rm nlp}=\bar{c}_{\rm nlp}$. Define $\overline{w}_2$ by
\begin{equation*}
\begin{split}
\overline w_2(t,x) =
\left\{
\begin{array}{ll}
\medskip
\lambda_u(x-(\lambda_u+\frac{1}{\lambda_u})t), & \text{for}~\frac{x}{t}\geq 2\lambda_u,\\
\medskip
\frac{t}{4}(\frac{x^2}{t^2}-4), &\text{for}~ \sigma_1\leq \frac{x}{t}<2\lambda_u,\\
\medskip
(\frac{\sigma_1}{2}-\sqrt{a})(x-\bar{c}_{\rm nlp}t), & \text{for}~\bar{c}_{\rm nlp}<\frac{x}{t}<\sigma_1,\\
0, &  \text{for}~\frac{x}{t}\leq \bar{c}_{\rm nlp}.
\end{array}
\right.
\end{split}
\end{equation*}
By construction, $\overline{w}_2$ is continuous in $[0,\infty)\times \mathbb{R}$. We claim that continuous $\overline{w}_2$ is a viscosity super-solution of \eqref{eq:sub-solutionw2}. (Actually, it is the unique viscosity solution of \eqref{eq:sub-solutionw2}, but we do not need this fact.) Indeed, $\overline{w}_2$ is a classical solution for \eqref{eq:sub-solutionw2} whenever $\frac{x}{t} \not \in \{\sigma_1, \bar{c}_{\rm nlp}\}$.  Now, it remains to consider the case when $\overline{w}_2 - \varphi$ attains a strict local minimum at $(t_0,x_0)$  for $\forall \varphi\in C^\infty(0,\infty)\times\mathbb{R})$, when $\frac{x_0}{t_0} = \sigma_1$ or $\bar{c}_{\rm nlp}$. In case $\frac{x_0}{t_0} = \sigma_1$, $(\overline{w}_2-\varphi)(t,\sigma_1 t) \geq (\overline{w}_2-\varphi)(t_0, x_0)$ for all $t\approx t_0$, so that $\partial_t \varphi(t_0,x_0) + \sigma_1\partial_x\varphi(t_0,x_0) = \frac{\sigma^2_1}{4} -1$. Hence, at $(t_0,x_0)$, (note that $(-a\chi_{\{x \leq  \sigma_1 t\}})^* =  - a\chi_{\{x < \sigma_1 t\}}$)
\begin{align*}
\partial_t\varphi +|\partial_x \varphi|^2 + 1 - a\chi_{\{x < \sigma_1 t\}}
&= \frac{\sigma_1^2}{4} - 1 - \sigma_1 \partial_x \varphi + |\partial_x \varphi|^2 + 1\\
&= \left( \partial_x \varphi - \frac{\sigma_1}{2}\right)^2 \geq 0.
\end{align*}
On the other hand, if $\frac{x_0}{t_0} = \bar{c}_{\rm nlp}$, then $\nabla \varphi(t_0,x_0) \cdot (1,\bar{c}_{\rm nlp}) = 0$, and
$$
 0 \leq \nabla \varphi(t_0,x_0) \cdot (-\bar{c}_{\rm nlp}t,1) \leq \nabla[({\frac{\sigma_1}{2}} - \sqrt{a})(x - \bar{c}_{\rm nlp}t)] \cdot(-\bar{c}_{\rm nlp},1),
$$
which means $\partial_t \varphi(t_0,x_0) = -\bar{c}_{\rm nlp}\partial_x \varphi$ and $0 \leq \partial_x \varphi(t_0,x_0) \leq  \frac{\sigma_1}{2} - \sqrt{a}$, whence
\begin{align*}
\partial_t\varphi +|\partial_x \varphi|^2 + 1 - a\chi_{\{x < \sigma_1 t\}}
&=  -\bar{c}_{\rm nlp} \partial_x \varphi + |\partial_x \varphi|^2 + 1-a\\
&= \left( \partial_x \varphi - \frac{1-a}{\frac{\sigma_1}{2}-\sqrt{a}}\right)\left( \partial_x \varphi - {\frac{\sigma_1}{2}+\sqrt{a}} \right) \geq 0
\end{align*}
at $(t_0,x_0)$. The last inequality holds because $\partial_x \varphi \leq \frac{\sigma_1}{2} - \sqrt{a} \leq \sqrt{1-a} \leq \frac{1-a}{\frac{\sigma_1}{2}-\sqrt{a}}$. Hence, $\overline{w}_2$ is a viscosity super-solution of \eqref{eq:sub-solutionw2}.

By Remark \ref{rmk:w1w20} and the express of $\overline{w}_2$, we have
$$
\overline{w}_2(0,x) = \lambda_u \max\{x,0\}=w_2^*(0,x) \,\quad \text{ for } x\in  \mathbb{R}.
$$
And recalling that $w_2^*$ is a viscosity sub-solution of \eqref{eq:sub-solutionw2}, we may deduce by Theorem \ref{thm:D} that
\begin{equation}\label{eq:ineqw2u*}
0 \leq w_2^*\leq \overline w_2 \quad\quad \text{ in } [0,\infty)\times\mathbb{R}.
\end{equation}
Now,
$$\{(t,x):w_2^*(t,x)=0\}\supset\{(t,x):\overline w_2(t,x)=0\}= \{(t,x): x\leq\hat c_{\rm nlp}t\}.
$$  Hence,
$$ w_2^\epsilon(t,x) = -\epsilon \log{u^\epsilon(t,x)} \to 0 \quad\text{ locally uniformly on } \{(t,x):~x< \hat c_{\rm nlp}t\}.$$
Hence for each small $\eta>0$, by choosing the compact sets $K=\{(1,x): \eta\leq  x \leq \hat c_{\rm nlp} - \eta \}$ and $K'=\{(1,x): \frac{\eta}{2} \leq x \leq \hat c_{\rm nlp} - \frac{\eta}{2} \}$, we may apply Lemma \ref{lem:underlinec}\rm{(a)} to get
$$\liminf_{t\to \infty}\inf_{\eta t\leq x\leq ( \hat c_{\rm nlp} - \eta)t} u(t,x)=\liminf_{\epsilon\to 0}\inf_{K} u^\epsilon (t,x)\geq \frac{1-a}{2}>0,$$
which implies $\underline c_2\geq \hat{c}_{\rm nlp}$.

Finally, for case (b), then we have $\hat{c}_{\rm nlp}=\tilde{c}_{\rm nlp}$. We define
\begin{equation*}
\begin{split}
\overline w_2 (t,x) =
\left\{
\begin{array}{ll}
\medskip
\lambda_u(x-(\lambda_u+\frac{1}{\lambda_u})t), & \text{for}~\frac{x}{t}\geq \sigma_1,\\
\medskip
\tilde \lambda_{\rm nlp} (x-\tilde {c}_{\rm nlp}t), &  \text{for}~\tilde{c}_{\rm nlp}< \frac{x}{t}<\sigma_1,\\
0, &   \text{for}~\frac{x}{t}\leq \tilde{c}_{\rm nlp}.
\end{array}
\right.
\end{split}
\end{equation*}
Then one can verify that $\overline{w}_2$ is likewise a viscosity super-solution of \eqref{eq:sub-solutionw2}, so that one can repeat the arguments for case (a) to show, again, that $\underline c_2\geq \hat{c}_{\rm nlp}$.
\end{proof}

\section{Estimating $\displaystyle \overline c_2$ from above and $\displaystyle \underline c_3$ from below}\label{S4}
We assume $\sigma_1>\sigma_2$ throughout this section. It remains to show $$\overline c_2\leq \max\{c_{\rm LLW}, \hat{c}_{\rm nlp}\} \quad \text{  and } \quad \underline c_3\geq -\max\{\tilde c_{\rm LLW}, \sigma_3\}.$$

\subsection{Estimating $\overline c_2$ from above}
For $\delta \geq 0$, we will construct an exponent $\hat\mu_\delta$ depending continuously on $\delta$  such that
$$
u(t, (\sigma_1-\delta) t)\leq \exp\left(  - (\hat \mu_\delta+ o(1))t\right) \quad \text{ for }t \gg 1,
$$
so that we may apply Lemma \ref{lem:appen1}\rm{(a)} to estimate $\overline{c}_2$ from above.


\begin{lemma}\label{lem:supsolutionw2}
 Let $( u, v)$ be a solution of \eqref{eq:1-1} with initial data satisfying $\rm{(H_\lambda)}$. Then
 $w_{2,*}$ is a viscosity  super-solution of
\begin{align}\label{eq:supsolutionw2}
\left\{
\begin{array}{ll}
\min\{\partial_tw+|\partial_xw|^2+1-a\chi_{\{\sigma_2 t<x<\sigma_1t\}},w\}=0,&\text{ in } (0,\infty)\times\mathbb{R},\\
w(0,x)=\lambda_u\max\{x, 0\}, & \text{ on } \mathbb{R},
\end{array}
\right.
\end{align}
where $\sigma_1$  and $\sigma_2$ are defined in \eqref{eq:sigma}.
\end{lemma}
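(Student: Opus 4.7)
The plan is to mirror the half-relaxed limits argument used in Lemma \ref{lem:sub-solutionw1}, but for the LSC envelope $w_{2,*}$ of $w_2^\epsilon$, using the \emph{lower} bound on $v^\epsilon$ from \eqref{eq:v_ep} in place of the upper bound on $u^\epsilon$ exploited there. Compared with the sub-solution case, the roles of $\chi$ of the open and closed strips are swapped and several inequalities reverse direction; both changes are consistent with the fact that the LSC envelope of the Hamiltonian naturally involves $\chi$ of the \emph{open} strip $\{\sigma_2 t < x < \sigma_1 t\}$.

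First, $w_{2,*}$ is lower semicontinuous by construction, and by Remark \ref{rmk:w1w20} we already have $w_{2,*}(0,x) = \lambda_u \max\{x,0\}$, so the initial condition is met. For the interior inequality, fix $\varphi\in C^\infty((0,\infty)\times\mathbb{R})$ and let $(t_0,x_0)$ be a strict local minimum of $w_{2,*}-\varphi$. By the standard Barles--Perthame perturbation argument, after passing to a subsequence there exist $(t_\epsilon,x_\epsilon)\to (t_0,x_0)$ at which $w_2^\epsilon-\varphi$ attains a local minimum, with $w_2^\epsilon(t_\epsilon,x_\epsilon)\to w_{2,*}(t_0,x_0)$. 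Evaluating the PDE \eqref{eq:epsilonw2} for $w_2^\epsilon$ at $(t_\epsilon,x_\epsilon)$ and using $\partial_{xx} w_2^\epsilon \geq \partial_{xx}\varphi$ (since $w_2^\epsilon-\varphi$ has a local min there) yields
$$
\partial_t\varphi(t_\epsilon,x_\epsilon) + |\partial_x\varphi(t_\epsilon,x_\epsilon)|^2 + 1 \;\geq\; u^\epsilon(t_\epsilon,x_\epsilon) + a v^\epsilon(t_\epsilon,x_\epsilon) + \epsilon\,\partial_{xx}\varphi(t_\epsilon,x_\epsilon).
$$

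Taking $\liminf$ as $\epsilon\to 0$, the test-function terms converge pointwise while $\epsilon\,\partial_{xx}\varphi\to 0$. Using $u^\epsilon\geq 0$ together with the lower bound $\liminf v^\epsilon\geq \chi_{\{\sigma_2 t<x<\sigma_1 t\}}$ from \eqref{eq:v_ep}---which survives the passage to $(t_\epsilon,x_\epsilon)\to (t_0,x_0)$ because the indicator of an open set is lower semicontinuous---and the elementary inequality $\liminf(X+Y)\geq \liminf X + \liminf Y$, we obtain
$$
\partial_t\varphi(t_0,x_0) + |\partial_x\varphi(t_0,x_0)|^2 + 1 - a\,\chi_{\{\sigma_2 t<x<\sigma_1 t\}}(t_0,x_0) \;\geq\; 0.
$$
Combined with the a-priori bound $w_{2,*}\geq 0$ (coming from $u\leq 1$, hence $w_2^\epsilon\geq 0$), this gives $\min\{\partial_t\varphi+|\partial_x\varphi|^2+1-a\chi,\,w_{2,*}\}\geq 0$ at $(t_0,x_0)$, which is the required super-solution inequality.

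The main subtlety is bookkeeping around the discontinuous Hamiltonian: one must select the \emph{open}-strip version of $\chi$ so that the right inequality on $v^\epsilon$ (a lower bound, via the relaxed $\liminf$) can be used in the limit. Everything else is a routine adaptation of the sub-solution argument of Lemma \ref{lem:sub-solutionw1}, and no further structural information about $u^\epsilon$ is needed beyond its nonnegativity.
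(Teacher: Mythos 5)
Your proof is correct and follows exactly the approach the paper intends: the paper's own proof is a one-line deferral to "standard arguments as in Lemma \ref{lem:sub-solutionw1}", which is precisely the Barles--Perthame half-relaxed limit argument you carry out, with the super-solution inequality, the $\liminf$ of $v^\epsilon$ from \eqref{eq:v_ep}, and the open-strip indicator (so that $H$ is already upper semicontinuous and $H^*=H$) replacing the corresponding sub-solution ingredients. The only minor imprecision is your remark that the lower bound on $v^\epsilon$ "survives the passage to $(t_\epsilon,x_\epsilon)$ because the indicator of an open set is LSC"; the real reason is that the half-relaxed $\liminf$ in \eqref{eq:v_ep} is by definition an infimum over all approaching sequences, so the bound automatically applies to the particular sequence $(t_\epsilon,x_\epsilon)$ -- but this is a matter of phrasing, not a gap.
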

\begin{proof}
It follows from standard arguments as in Lemma \ref{lem:sub-solutionw1}.
\end{proof}


\begin{proposition}\label{prop:overlinec2}
 Let $(u,v)$ be a solution of \eqref{eq:1-1} with initial data satisfying $\rm{(H_\lambda)}$. Then
$$\overline c_2\leq \max\{c_{\rm LLW}, \hat{c}_{\rm nlp}\},$$
\end{proposition}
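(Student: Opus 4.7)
The strategy follows the outline in Section 1.4. Since $w_{2,*}$ is already known to be a viscosity super-solution of \eqref{eq:supsolutionw2} (Lemma \ref{lem:supsolutionw2}), the plan is to pair it with a suitable \emph{sub-solution} $\underline{w}_2$ of \eqref{eq:supsolutionw2}, invoke the piecewise-Lipschitz comparison principle (Theorem \ref{thm:D}), translate the resulting lower bound on $w_{2,*}$ into an exponential upper bound on $u$, and finally feed this upper bound into Lemma \ref{lem:appen1} to control $\overline{c}_2$.

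\textbf{Step 1 (sub-solution construction).} For every small $\delta>0$, I would build a continuous, piecewise-affine function $\underline{w}_2 = \underline{w}_2^\delta$ on $[0,\infty)\times\mathbb{R}$ that (i) equals $\lambda_u\max\{x,0\}$ at $t=0$, (ii) vanishes on $\{x \leq \hat{c}_{\rm nlp} t\}$, and (iii) agrees, up to an error $O(\delta)$, with the super-solution $\overline{w}_2$ from Lemma \ref{lem:underlinec2} on the wedge $\{\hat{c}_{\rm nlp}t \leq x \leq \sigma_1 t\}$. The construction splits into the same two subcases (a) $\sigma_1<2\lambda_u$ and (b) $\sigma_1\geq 2\lambda_u$ used in the proof of Lemma \ref{lem:underlinec2}, so that on the relevant wedge $\underline{w}_2$ is the affine piece with slope $\tfrac{\sigma_1}{2}-\sqrt{a}$ or $\tilde\lambda_{\rm nlp}$ respectively, shifted so as to lie below $\overline{w}_2$. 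Verifying the sub-solution property reduces, away from the jump lines $x=\sigma_1 t$ and $x=\sigma_2 t$, to the same algebra as in Lemma \ref{lem:underlinec2} (note the upper semicontinuous envelope $(-a\chi_{\{\sigma_2 t<x<\sigma_1 t\}})^\ast=-a\chi_{\{\sigma_2 t<x<\sigma_1 t\}}$); on the jump lines the admissibility check uses the piecewise-Lipschitz viscosity framework of Ishii--Tourin built into Theorem \ref{thm:D}.

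\textbf{Step 2 (comparison and pointwise decay).} Since $\underline{w}_2$ and $w_{2,*}$ share the same initial/boundary data (by Remark \ref{rmk:w1w20}), Theorem \ref{thm:D} yields $\underline{w}_2 \leq w_{2,*}$ on $[0,\infty)\times\mathbb{R}$. Unwinding the definitions gives, for $\hat{c}_\delta=\sigma_1-\delta$ and $\hat\mu_\delta:=\overline{w}_2(1,\hat{c}_\delta)$ (with $\hat\mu_\delta>0$ precisely when $\hat{c}_\delta>\hat{c}_{\rm nlp}$, and $\hat\mu_\delta$ continuous in $\delta$),
\[
u(t,\hat{c}_\delta t) \;\leq\; \exp\bigl(-[\hat\mu_\delta+o(1)]\,t\bigr) \qquad \text{as } t\to\infty.
\]
Combined with Proposition \ref{prop:1} and Corollary \ref{cor:underlinec1}, $(u,v)$ is globally bounded and satisfies $(u,v)(t,0)\to(k_1,k_2)$, $(u,v)(t,\hat{c}_\delta t)\to(0,1)$.

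\textbf{Step 3 (conclusion via Lemma \ref{lem:appen1}).} Restricting to the moving domain $\{0\leq x\leq \hat{c}_\delta t\}$, the hypotheses of Lemma \ref{lem:appen1}(a) are met: we have the boundary profiles above, the exponential upper bound with rate $\hat\mu_\delta$ at the right boundary, and ordered-pair LLW dynamics in the interior. Lemma \ref{lem:appen1} then provides a sharp upper bound $\overline{c}_2 \leq \max\{c_{\rm LLW}, c^\sharp_\delta\}$, where $c^\sharp_\delta$ is the linear-spreading speed consistent with decay rate $\hat\mu_\delta$ at the moving front $\hat{c}_\delta t$. Letting $\delta\to 0^+$ (so that $\hat{c}_\delta\to\sigma_1$ and $\hat\mu_\delta\to 0$ continuously), $c^\sharp_\delta$ converges to $\hat{c}_{\rm nlp}$ by construction of $\overline{w}_2$, yielding $\overline{c}_2\leq \max\{c_{\rm LLW},\hat{c}_{\rm nlp}\}$.

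\textbf{Main obstacle.} I expect Step 1 to be the crux: choosing $\underline{w}_2$ so that it is simultaneously a genuine viscosity sub-solution \emph{across the discontinuity lines} $x=\sigma_1 t$ and $x=\sigma_2 t$, has the same value as $\overline{w}_2$ at $(1,\sigma_1-\delta)$ up to $o(1)$, and matches the initial data. The different Hamiltonian on the strip $\{\sigma_2 t<x<\sigma_1 t\}$ (versus Lemma \ref{lem:underlinec2}) forces a different piecewise-affine ansatz and a careful application of the Ishii--Tourin-type comparison in Theorem \ref{thm:D}; the remaining steps are then relatively routine.
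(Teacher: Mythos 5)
Your Step 1 is where the argument breaks down. You propose a sub-solution $\underline{w}_2^\delta$ of \eqref{eq:supsolutionw2} that agrees (up to $O(\delta)$) with the super-solution $\overline{w}_2$ from Lemma \ref{lem:underlinec2} on the wedge $\{\hat{c}_{\rm nlp}t \leq x \leq \sigma_1 t\}$, with a vanishing piece on $\{x \leq \hat{c}_{\rm nlp}t\}$. But $\overline{w}_2$ was built for the Hamiltonian of \eqref{eq:sub-solutionw2}, which equals $1-a$ on the whole of $\{x < \sigma_1 t\}$. The Hamiltonian of \eqref{eq:supsolutionw2}, however, is $1$ (not $1-a$) on the strip $\{\hat{c}_{\rm nlp}t < x < \sigma_2 t\}$, and $\hat{c}_{\rm nlp} < \sigma_2$ always. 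On that strip, taking for instance the affine piece $(\tfrac{\sigma_1}{2}-\sqrt{a})(x-\bar{c}_{\rm nlp}t)$ from case (a), you compute
\[
\partial_t w + |\partial_x w|^2 + 1 = -(\tfrac{\sigma_1}{2}-\sqrt{a})\bar{c}_{\rm nlp} + (\tfrac{\sigma_1}{2}-\sqrt{a})^2 + 1 = a > 0,
\]
so the proposed $\underline{w}_2^\delta$ fails the sub-solution inequality precisely in that intermediate strip. In addition, your limiting claim ``$\hat\mu_\delta \to 0$ as $\delta\to 0^+$'' is incorrect: $\hat\mu_\delta = \overline{w}_2(1,\sigma_1-\delta) \to \overline{w}_2(1,\sigma_1) > 0$, and this positive limit is exactly the decay exponent that must be fed into Lemma \ref{lem:appen1}; if $\hat\mu_\delta$ really tended to zero the lemma would give no useful bound.

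The paper avoids all of this with a much simpler observation. Take $\underline{w}_2$ to be the exact solution of the \emph{pure} KPP Hamilton--Jacobi problem
\[
\min\{\partial_t w + |\partial_x w|^2 + 1, w\}=0,\qquad w(0,x)=\lambda_u\max\{x,0\},
\]
given explicitly by \eqref{eq:underw2} (and the companion formula for $\lambda_u\leq 1$). Since $1 - a\,\chi_{\{\sigma_2 t < x < \sigma_1 t\}} \leq 1$, this $\underline{w}_2$ is \emph{automatically} a viscosity sub-solution of \eqref{eq:supsolutionw2} everywhere, with no delicate verification across $x=\sigma_2 t$ or $x=\sigma_1 t$ required. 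Its zero set is $\{x\leq \sigma_2 t\}$, which is larger than $\{x\leq \hat{c}_{\rm nlp}t\}$, but that is irrelevant: the only datum needed is the value $\hat\mu = \underline{w}_2(1,\sigma_1)$, and since $\underline{w}_2 = \overline{w}_2$ on $\{x\geq \sigma_1 t\}$ (the Hamiltonians coincide there) this equals $\overline{w}_2(1,\sigma_1)>0$. The remaining work, which your proposal elides, is the case analysis in Step 3 showing $c_{\sigma_1,\hat\mu}=\max\{c_{\rm LLW},\hat{c}_{\rm nlp}\}$ by comparing $\lambda_{\hat\mu}$ (from \eqref{eq:lambdac1}) with $\lambda_{\rm LLW}$ and using the monotonicity of $G$ and of $s\mapsto s+\frac{1-a}{s}$.
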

where $c_{\rm LLW}$ and $\hat{c}_{\rm nlp}$ are defined respectively in Theorem \ref{thm:LLW} and \ref{thm:1-2}.
\begin{proof}
\noindent{\bf Step 1.}  Define $\underline{w}_2: [0,\infty) \times \mathbb{R}$ by
\begin{equation}\label{eq:underw2}
\begin{split}
\underline{w}_2(t,x) =
\left\{
\begin{array}{ll}
\medskip
\lambda_u(x-(\lambda_u+\frac{1}{\lambda_u})t), & \text{for}~\frac{x}{t}\geq 2\lambda_u,\\
\medskip
\frac{t}{4}(\frac{x^2}{t^2}-4), &\text{for}~ 2\leq \frac{x}{t}<2\lambda_u,\\
\medskip
0, & \text{for}~{\frac{x}{t}<2},
\end{array}
\right.
\end{split}
\end{equation}
in case $\lambda_u >1$, and by
\begin{equation*}
\underline{w}_2(t,x) =
\lambda_u \max\left\{x-(\lambda_u+\frac{1}{\lambda_u})t, 0\right\},
\end{equation*}
in case $\lambda _u\leq 1$.
Then it is straightforward to verify that $\underline{w}_2$ is a viscosity sub-solution of \eqref{eq:supsolutionw2}. Since, $w_{2,*}(0,x) = \lambda_u \max\{x,0\} = \underline{w}_2(0,x)$ in $\mathbb{R}$ (by Remark \ref{rmk:w1w20}), we may apply Theorem \ref{thm:D} to deduce
 \begin{equation}\label{eq:equivaw2}
 w_{2,*}(t, x)\geq \underline w_2(t,x) \quad \text{ for } [0,\infty)\times\mathbb{R}.
 \end{equation}

 \noindent{\bf Step 2.} To show that, for each $\hat{c}\geq 0$,
 \begin{equation}\label{eq:inequ}
 u(t,\hat{c}t)\leq \exp\{-(\underline w_2(1,\hat{c})+o(1))t\} \quad \text{ for } t\gg1.
 \end{equation}
 And that $\underline w_2(1,\sigma_1)$ is given by
\begin{equation}\label{eq:muc1}
\underline w_2(1,\sigma_1)=\left\{
\begin{array}{ll}
\medskip
(\frac{\sigma_1}{2}-\sqrt{a})(\sigma_1-\bar c_{\rm nlp}), &\text{ for } \sigma_1< 2\lambda_u,\\
\tilde\lambda_{\rm nlp}(\sigma_1-\tilde c_{\rm nlp}), &\text{ for } \sigma_1\geq 2\lambda_u,\\
\end{array}
\right.
\end{equation}
and $\bar{c}_{\rm nlp}, \,\tilde{c}_{\rm nlp},\,\tilde\lambda_{\rm nlp}$ are all given in Lemma \ref{lem:underlinec2}.

By definition of $w_{2,*}$ and $w_2^\epsilon(t,x)=-\epsilon \log{u^\epsilon(t,x)}$, for each small $\epsilon>0$, by applying Step 1, we have
\begin{equation*}
-\epsilon\log{u\left(\frac{1}{\epsilon},\frac{\hat c}{\epsilon}\right)}\geq w_{2,*}(1,\hat c)+o(1) \geq \underline w_2(1,\hat c)+o(1)
\end{equation*}
$$\Longleftrightarrow \,\, u\left(\frac{1}{\epsilon},\frac{\hat c}{\epsilon}\right) \leq \exp\left(-\frac{\underline w_2(1,\hat c) + o(1)}{\epsilon}\right),  
$$
which implies \eqref{eq:inequ}.
By the formula of $\underline w_2$, we can show
\begin{itemize}
\item[\rm{(i)}] For $\sigma_1<2\lambda_u$, we substitute $(t,x) = (1,\sigma_1)$ in \eqref{eq:underw2} to obtain
\begin{equation}\label{eq:4.3b}\underline w_2(1,\sigma_1)=\frac{1}{4}(\sigma_1^2-4)=(\frac{\sigma_1}{2}-\sqrt{a})(\sigma_1-\bar c_{\rm nlp}), 
\end{equation}
where $\bar c_{\rm nlp}=\frac{\sigma_1}{2}-\sqrt{a}+\frac{1-a}{\frac{\sigma_1}{2}-\sqrt{a}}$;
\item[\rm{(ii)}] For $\sigma_1\geq 2\lambda_u$,
we substitute $(t,x) = (1,\sigma_1)$ in \eqref{eq:underw2} to obtain
\begin{equation}\label{eq:4.3ccc}
\underline w_2(1,\sigma_1)=\lambda_u\left(\sigma_1-(\lambda_u+\frac{1}{\lambda_u})\right).
\end{equation}
Recalling the definition of $\tilde{\lambda}_{\rm nlp}$ in \eqref{eq:tlambdanlp}, we have
$$
\tilde{\lambda}_{\rm nlp} - \lambda_u = \frac{1}{2} \left[(\sigma_1 - 2\lambda_u) - \sqrt{\sigma_1 - 2\lambda_u) ^2 + 4a }\right],
$$ so that
\begin{equation}\label{eq:4.3cc}
(\tilde{\lambda}_{\rm nlp} - \lambda_u)^2 - (\sigma_1 - 2\lambda_u)(\tilde{\lambda}_{\rm nlp} - \lambda_u) - a=0.
\end{equation}

Hence, \eqref{eq:4.3ccc} becomes
\begin{equation}\label{eq:4.3c}
\underline w_2(1,\sigma_1)=\lambda_u\left(\sigma_1-(\lambda_u+\frac{1}{\lambda_u})\right)=\tilde\lambda_{\rm nlp}(\sigma_1-\tilde c_{\rm nlp}),
\end{equation}
where $\tilde c_{\rm nlp}, \tilde\lambda_{\rm nlp}$ are as in \eqref{eq:tlambdanlp}.
\end{itemize}
This implies \eqref{eq:muc1} holds, which completes Step 2.


\noindent{\bf Step 3.} To show $\overline c_2\leq \max\{c_{\rm LLW}, \hat{c}_{\rm nlp}\}$.

It follows from Proposition \ref{prop:1} and Corollary \ref{cor:underlinec1} that for $\hat c\in (\sigma_2,\sigma_1)$,
$$\lim\limits_{t\to\infty}(u,v)(t,0)=(k_1,k_2)\,\,\text{ and }\,\,\lim\limits_{t\to\infty}(u,v)(t, \hat c t)=(0,1).$$

By Step 2 and observation  $ \lambda_{\rm LLW}{c}_{\rm LLW}= \lambda_{\rm LLW}^2+1-a$, then we apply Lemma \ref{lem:appen1}\rm{(a)} in Appendix to conclude that for $\hat c\in (\sigma_2,\sigma_1)$,
\begin{equation}\label{estim}
 \overline c_2\leq c_{\hat{c},\underline w_2(1,\hat{c})} =\left\{
\begin{array}{ll}
\medskip
c_{\rm LLW},& \text{ if } \underline w_2(1,\hat{c})\geq -\lambda_{\rm LLW}^2+\lambda_{\rm LLW} \hat{c}-(1-a),\\
 \hat{c}-\frac{2{\underline w_2(1,\hat{c})}}{\hat{c}-\sqrt{\hat{c}^2-4({\underline w_2(1,\hat{c})}+1-a)}},&  \text{ if } \underline w_2(1,\hat{c})< -\lambda_{\rm LLW}^2+\lambda_{\rm LLW} \hat{c}-(1-a).\\
\end{array}
\right .
\end{equation}
Letting $\hat{c}\nearrow \sigma_1$, \eqref{estim} can be expressed as (denote $\hat\mu=\underline w_2(1,\sigma_1)$)
\begin{equation}\label{estimc2}
 \overline c_2\leq c_{\sigma_1,{\hat\mu}} =\left\{
\begin{array}{ll}
\medskip
c_{\rm LLW},& \text{ if } {\hat\mu}\geq -\lambda_{\rm LLW}^2+\lambda_{\rm LLW}\sigma_1-(1-a),\\
\sigma_1-\frac{2{\hat\mu}}{\sigma_1-\sqrt{\sigma_1^2-4({\hat\mu}+1-a)}},&  \text{ if } {\hat\mu}< -\lambda_{\rm LLW}^2+\lambda_{\rm LLW}\sigma_1-(1-a).\\
\end{array}
\right .
\end{equation}
It remains to verify $c_{\sigma_1,{\hat\mu}}=\max\{c_{\rm LLW},\hat c_{\rm nlp}\}$, where $\hat c_{\rm nlp}=\hat\lambda_{\rm nlp}+\frac{1-a}{\hat\lambda_{\rm nlp}}$ and
\begin{equation}\label{eq:lcacc}
\hat{\lambda}_{\rm nlp} = \begin{cases}
\frac{\sigma_1}{2}-\sqrt{a},& \text{ if }\sigma_1 < 2\lambda_u\,\, \text{ and }\,\, \sigma_1 \leq 2 (\sqrt a + \sqrt{1-a}),\\
\tilde\lambda_{\rm nlp},  & \text{ if }\sigma_1  \geq 2\lambda_u \,\,\text{ and }\,\, \tilde\lambda_{\rm nlp} \leq \sqrt{1-a},\\
\sqrt{1-a}, &\text{ otherwise,}
\end{cases}
\end{equation}
and $\tilde\lambda_{\rm nlp}$ is given in Lemma \ref{lem:underlinec2}. Note that
\begin{equation}\label{eq:lambdamuhat}
\hat{\lambda}_{\rm nlp} = \min\{\lambda_{\hat\mu}, \sqrt{1-a}\}, \quad \text{ where }\quad \lambda_{\hat\mu}:= \left\{
\begin{array}{ll}
\medskip
\frac{\sigma_1}{2}-\sqrt{a}, &\text{ for } \sigma_1< 2\lambda_u,\\
\tilde\lambda_{\rm nlp}, &\text{ for } \sigma_1\geq 2\lambda_u.\\
\end{array}
\right.
\end{equation}
By \eqref{eq:4.3b} and \eqref{eq:4.3c}, $\hat\mu = \underline{w}_2(1,\sigma_1)$ can be expressed as

\begin{equation}\label{eq:muc1'}
{\hat\mu}=G(\lambda_{\hat\mu}), \quad \text {where }\quad  G(\lambda): = -\lambda^2+\sigma_1\lambda-(1-a)
\end{equation}
and
$\lambda_{\hat\mu}$ is as defined in \eqref{eq:lambdamuhat}.
Note that $G(\lambda)$ is strictly increasing on $[0,\frac{\sigma_1}{2}]$.

We note for later purposes that \eqref{eq:muc1'} is a quadratic equation in $\lambda_{\hat\mu}$, so that
\begin{equation}\label{eq:lambdac1}
\lambda_{\hat\mu} = \frac{\sigma_1 - \sqrt{\sigma_1^2 - 4(\hat\mu + 1 - a)}}{2}.
\end{equation}
 Since $\lambda_{\rm LLW} \in (0, \sqrt{1-a}]$, we divide our discussion into two cases: (i) $\lambda_{\hat\mu} < \lambda_{\rm LLW}$; (ii) $\lambda_{\rm LLW} \leq \lambda_{\hat\mu}$. 
\begin{itemize}

\item[(i)] Case $\lambda_{\hat\mu} < \lambda_{\rm LLW}$.  (Recall that $\lambda_{\rm LLW} \leq \sqrt{1-a}$.)

By \eqref{eq:lambdamuhat}, $\hat\lambda_{\rm nlp} = \lambda_{\hat\mu}< \lambda_{\rm LLW}$, whence it follows from the observation
\begin{equation}\label{eq:oob}
\hat{c}_{\rm nlp} = \hat\lambda_{\rm nlp} + \frac{1-a}{\hat\lambda_{\rm nlp}}\,\, \text{ and }\,\, c_{\rm LLW}  = \lambda_{\rm LLW}  + \frac{1-a}{\lambda_{\rm LLW}},
\end{equation} and the monotonicity of $s + \frac{1-a}{s}$  in $(0,\sqrt{1-a}]$
that $\hat{c}_{\rm nlp} \geq c_{\rm LLW}$. It remains to show that $c_{\sigma_1,\hat\mu} = \hat{c}_{\rm nlp}$.

Now, by monotonicity of $G$, we have
$${\hat\mu}=G(\lambda_{\hat\mu})  <G(\lambda_{\rm{LLW}})=-\lambda_{ \rm LLW}^2+\lambda_{\rm LLW}\sigma_1-(1-a).$$
By \eqref{estimc2}, we have $c_{\sigma_1,{\hat\mu}}= \sigma_1-\frac{2{\hat\mu}}{\sigma_1-\sqrt{ \sigma_1^2-4({\hat\mu}+1-a)}}$. Hence,
$$c_{\sigma_1,{\hat\mu}}
=\sigma_1- \frac{{\hat\mu}}{\lambda_{\hat\mu}}=\lambda_{\hat\mu}+\frac{1-a}{\lambda_{\hat\mu}}=\hat{c}_{\rm nlp},$$
where the  first and second equalities follow from \eqref{eq:lambdac1} and  \eqref{eq:muc1'}, respectively.

\item[(ii)] Case $\lambda_{\rm LLW} \leq \lambda_{\hat\mu}$.

By \eqref{eq:lambdamuhat},
$$
\hat\lambda_{\rm nlp} = \min\{\lambda_{\hat\mu}, \sqrt{1-a}\} \geq \min\{\lambda_{\rm LLW}, \sqrt{1-a}\} =\lambda_{\rm LLW}.
$$ It follows from \eqref{eq:oob} that $\hat{c}_{\rm nlp} \leq c_{\rm LLW}$. It remains to show that $\hat\mu \geq G(\lambda_{\rm LLW})$, so that $c_{\sigma_1,\hat\mu} = c_{\rm LLW} = \max\{c_{\rm LLW}, \hat{c}_{\rm nlp}\}$. Indeed, one can check that $\lambda_{\rm LLW} \leq \lambda_{\hat\mu} \leq \sigma_1/2$, and we deduce
$$
\hat\mu = G(\lambda_{\hat\mu}) \geq G(\lambda_{\rm LLW}),
$$
by the monotonicity of $G$ in $[0,\sigma_1/2]$.

\end{itemize}

The proof of Proposition \ref{prop:overlinec2} is now complete.
\end{proof}

\subsection{Estimating $\underline{c}_3$ from below}
For convenience, let $\tilde u(t,x)=u(t,-x),\,\tilde v(t,x)=v(t,-x)$, and define
$$  \tilde u^\epsilon (t,x)=\tilde u\left(\frac{t}{\epsilon},\frac{x}{\epsilon}\right),\,\, \tilde v^\epsilon (t,x)=\tilde v\left(\frac{t}{\epsilon},\frac{x}{\epsilon}\right),\,\, w_3^\epsilon= -\epsilon \log{\tilde v^\epsilon (t,x)}\,\, \text{ in } [0,\infty)\times\mathbb{R}.$$
Again we pass to the half-relaxed limit:
$$w_{3,*}(t,x)=\liminf_{\scriptsize \begin{array}{c}\epsilon \to 0\\ (t',x') \to (t,x)\end{array}} \hspace{-.5cm} w_3^\epsilon (t',x').$$

\begin{lemma}\label{lem:tildeuv}
Let $(\tilde u,\tilde v)$ be a solution of \eqref{eq:1-1} such that  $x\rightarrow\left(\tilde u(0,-x),\tilde v(0,-x)\right)$ satisfies $\rm{(H_\lambda)}$. Then, for each small $\eta>0$,
\begin{equation}\label{eq:tildeuv}
\lim\limits_{t\to\infty} \sup_{x>(d\lambda^-_v+\frac{r}{\lambda^-_v}+\eta)t}(|\tilde u(t,x)-1|+|\tilde v(t,x)|)=0.
\end{equation}
\end{lemma}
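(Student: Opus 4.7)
The plan is to decouple the system and establish \eqref{eq:tildeuv} in two steps: first a direct supersolution argument shows $\tilde v \to 0$ on the target moving region, and then a bootstrap shows $\tilde u \to 1$ on the same region using the fact that the competitor has vanished.

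\textbf{Step 1 (Exponential KPP supersolution for $\tilde v$).} Since the reflected initial data $(\tilde u_0(-\cdot), \tilde v_0(-\cdot))$ satisfies $(\mathrm{H}_\lambda)$, we have $\tilde v_0(x) \leq M e^{-\lambda_v^- x}$ on $\mathbb R$ for some $M \geq 1$. Setting $c_0 := d\lambda_v^- + r/\lambda_v^-$, the traveling exponential $M e^{-\lambda_v^-(x - c_0 t)}$ is an exact solution of the linearized equation $\partial_t \varphi = d\partial_{xx}\varphi + r\varphi$, so
\[
\bar V(t,x) := \min\bigl\{1,\, M e^{-\lambda_v^-(x - c_0 t)}\bigr\}
\]
is a weak supersolution of the scalar KPP equation $\partial_t w = d\partial_{xx} w + rw(1-w)$. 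Since $\tilde v$ satisfies $\partial_t \tilde v - d\partial_{xx}\tilde v = r\tilde v(1 - b\tilde u - \tilde v) \leq r\tilde v(1-\tilde v)$, the scalar comparison principle gives $\tilde v \leq \bar V$. Consequently,
\[
\sup_{x > (c_0 + \eta) t} \tilde v(t,x) \;\leq\; M e^{-\lambda_v^- \eta t} \;\longrightarrow\; 0 \quad \text{as } t \to \infty.
\]

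\textbf{Step 2 (Convergence $\tilde u \to 1$).} The upper bound $\tilde u \leq 1$ follows exactly as in Step 1 of Proposition \ref{prop:1}. For the matching lower bound, fix $\delta \in (0, \tfrac12)$. By Step 1, there exists $T_\delta \geq 1$ such that $a\tilde v(t,x) \leq \delta$ on the cone $\mathcal R_\delta := \{(t,x): t \geq T_\delta,\, x > (c_0 + \eta/2)t\}$. Inside $\mathcal R_\delta$,
\[
\partial_t \tilde u - \partial_{xx}\tilde u \;\geq\; \tilde u\bigl((1 - \delta) - \tilde u\bigr).
\]
Two ingredients close out the argument: (i) the global inequality $\partial_t \tilde u - \partial_{xx}\tilde u \geq \tilde u(1 - a - \tilde u)$, combined with the bound $\tilde u_0 \geq \theta_0 \chi_{[0,\infty)}$ (from the reflected hypothesis) and classical Aronson--Weinberger theory, provides a uniform positive lower bound for $\tilde u$ along the lateral boundary $\{x = (c_0 + \eta/2)t, t \geq T_\delta\}$; (ii) with these boundary/initial data in hand, we compare $\tilde u$ inside $\mathcal R_\delta$ against the scalar KPP equation with reaction $\tilde u((1-\delta)-\tilde u)$ (via Lemma \ref{lem:entire1} in the appendix, which is designed for this kind of moving-cone decoupling), to conclude $\liminf_{t \to \infty} \inf_{x > (c_0 + \eta)t} \tilde u(t,x) \geq 1 - \delta$. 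Sending $\delta \to 0$ gives $\tilde u \to 1$ uniformly on $\{x > (c_0 + \eta) t\}$.

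Together Steps 1 and 2 yield \eqref{eq:tildeuv}. The main obstacle is the execution of Step 2: the reaction for $\tilde u$ is only \emph{asymptotically} the KPP reaction with stable state $1$, and only on a moving cone rather than a fixed half-plane, so one must carefully transfer the boundary data along the inner ray $x = (c_0 + \eta/2)t$. Invoking the appendix lemma is the cleanest route; a hands-on alternative is to plant a translate of a standard compactly-supported KPP subsolution for $\partial_t w - \partial_{xx} w = w((1-\delta) - w)$ far inside $\mathcal R_\delta$ at time $T_\delta$ and verify that it fills out $\{x > (c_0 + \eta)t\}$ in the limit $t \to \infty$.
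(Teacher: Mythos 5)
Your proof is essentially the paper's: Step 1 is identical, and your Step 2 (comparing $\tilde u$ from below with the scalar KPP subsolution with reaction $u(1-a-u)$ and then invoking Lemma \ref{lem:entire1}(b) to promote the positive lower bound to $\tilde u \to 1$) is exactly the paper's route, with the paper citing Kametaka/McKean rather than Aronson--Weinberger for the one-sided spreading of the subsolution, an inconsequential difference. One small clarification: the comparison in your ingredient (i) actually yields the lower bound on the full moving cone $\{x > (c_0+\eta/2)t\}$, not merely along the lateral ray, and it is this full-cone lower bound that the hypotheses of Lemma \ref{lem:entire1}(b) require.
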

\begin{proof}
Let $v_{\rm KPP}$ be the solution of
\begin{equation*}
\left\{
\begin{array}{ll}
\partial_t v_ {\rm KPP}- d\partial_{xx} v_{\rm KPP} =r v_{\rm KPP}(1-v_{\rm KPP}), &\text{ in }(0,\infty) \times \mathbb{R},\\
v_{\rm KPP} = \min\{1, Ce^{-\lambda_v^-x}\}, &\text{ on } x \in \mathbb{R}.
\end{array}
\right.
\end{equation*}
By choosing $C$ to be sufficiently large, we may apply comparison principle to get
 $0\leq \tilde v \leq v_{\rm KPP}$. Therefore, for each $\eta>0$, 
\begin{equation}\label{eq:tildevkpp}
\lim_{t \to \infty} \sup_{x > (d\lambda^-_v+\frac{r}{\lambda^-_v}+\eta)t} |\tilde v(t,x)| = 0 \, \text{ for each }\eta >0.
\end{equation}

Let $u_{\rm KPP}$ be the solution of
\begin{equation*}
\left\{
\begin{array}{ll}
\partial_t u_{\rm KPP}- \partial_{xx} u_{\rm KPP} = u_{\rm KPP}(1-a-u_{\rm KPP}), &\text{ in }(0,\infty) \times \mathbb{R},\\
u_{\rm KPP}(0,x) = u_0(x), &\text{ on } x \in \mathbb{R}.
\end{array}
\right.
\end{equation*}
Again the scalar comparison principle implies $u \geq u_{\rm KPP}$.
By the results in \cite{Kametaka_1976} or \cite{Mckean_1975}, we have, for each small $\eta>0$,
\begin{equation}\label{eq:tildeukpp}
\lim\limits_{t \to \infty} \inf\limits_{x >-(2\sqrt{1-a}+\eta)t} \tilde u(t,x)=\lim_{t \to \infty} \inf_{x <(2\sqrt{1-a}-\eta)t} u(t,x)\geq \frac{1-a}{2}.
\end{equation}

By small $\eta$>0, we have \eqref{eq:tildevkpp} and \eqref{eq:tildeukpp} hold, thus we may apply Lemma \ref{lem:entire1}\rm{(b)} to deduce \eqref{eq:tildeuv}.
\end{proof}

 In view of Lemma \ref{lem:tildeuv}, we obtain
\begin{equation}\label{eq:tildeu}
\chi_{\{x>(d\lambda^-_v+\frac{r}{\lambda^-_v})t\}}\leq \hspace{-.5cm}\liminf_{\scriptsize \begin{array}{c}\epsilon \to 0\\ (t',x') \to (t,x)\end{array}} \hspace{-.5cm} \tilde u^\epsilon(t',x')\leq \hspace{-.5cm}\limsup_{\scriptsize \begin{array}{c}\epsilon \to 0\\ (t',x') \to (t,x)\end{array}} \hspace{-.5cm} \tilde u^\epsilon(t',x')\leq 1 .
\end{equation}
\begin{lemma}\label{lem:sub-solutionw3}
Let $(\tilde u,\tilde v)$ be a solution of \eqref{eq:1-1} such that  $x\rightarrow\left(\tilde u(0,-x),\tilde v(0,-x)\right)$ satisfies $\rm{(H_\lambda)}$. Then, $w_{3,*}$ 
 is a viscosity  super-solution of
\begin{equation}\label{eq:supsolutionw3}
\left\{
\begin{array}{ll}
\min\{\partial_tw+d|\partial_xw|^2+r(1-b\chi_{\{x>(d\lambda^-_v+\frac{r}{\lambda^-_v})t\}}),w\}=0&\text{ in } (0,\infty)\times (0,\infty),\\
\medskip
w(0,x)=\lambda_v^-x, & \text{ on }[0,\infty),\\
w(t,0)=0, & \text{ for }~t>0.
\end{array}
\right.
\end{equation}
\end{lemma}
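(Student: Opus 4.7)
The plan is to adapt the viscosity-solution argument used to prove Lemma \ref{lem:sub-solutionw1} and Lemma \ref{lem:supsolutionw2}, reversing the sign of the inequality since we now need the super-solution property of $w_{3,*}$. Note first that $(\tilde{u},\tilde{v})(t,x)=(u,v)(t,-x)$ satisfies \eqref{eq:1-1} with coefficients that are spatially independent, so $w_3^\epsilon$ satisfies the analogue of \eqref{eq:epsilonw1} with $\tilde{u}^\epsilon,\tilde{v}^\epsilon$ in place of $u^\epsilon,v^\epsilon$.

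I would first handle the initial and boundary data. Since the reflected initial pair $(\tilde{u}(0,-\cdot),\tilde{v}(0,-\cdot))$ satisfies $(\mathrm{H}_\lambda)$, the argument of Lemma \ref{lem:3-1'}, restricted to the quadrant $[0,\infty)\times[0,\infty)$, yields
\[
\max\{\lambda_v^- x - Q(t+\epsilon),\,0\} \leq w_3^\epsilon(t,x) \leq \lambda_v^- x + Q(t+\epsilon),
\]
so that $w_{3,*}(0,x)=\lambda_v^- x$ for $x\geq 0$ and $w_{3,*}(t,0)=0$ for $t\geq 0$, matching the data in \eqref{eq:supsolutionw3}. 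The bound $w_{3,*}\geq 0$ follows from $\tilde{v}\leq 1$ and automatically takes care of the $w$-factor in the $\min\{\cdot,w\}$.

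Next I would verify the super-solution property in the interior. Fix $\varphi\in C^\infty((0,\infty)\times(0,\infty))$ and let $(t_0,x_0)$ be a strict local minimum of $w_{3,*}-\varphi$ with $w_{3,*}(t_0,x_0)>0$. Along a subsequence $\epsilon=\epsilon_k\to 0$, one obtains local minima $(t_\epsilon,x_\epsilon)\to(t_0,x_0)$ of $w_3^\epsilon-\varphi$ with $w_3^\epsilon(t_\epsilon,x_\epsilon)\to w_{3,*}(t_0,x_0)$. The equation for $w_3^\epsilon$ then gives, at $(t_\epsilon,x_\epsilon)$,
\[
\epsilon d\,\partial_{xx}\varphi \leq \partial_t\varphi + d|\partial_x\varphi|^2 + r\bigl(1-b\tilde{u}^\epsilon-\tilde{v}^\epsilon\bigr).
\]
Because $w_{3,*}(t_0,x_0)>0$, we have $\tilde{v}^\epsilon(t_\epsilon,x_\epsilon)=e^{-w_3^\epsilon(t_\epsilon,x_\epsilon)/\epsilon}\to 0$, while \eqref{eq:tildeu} implies $\liminf_{\epsilon\to 0}\tilde{u}^\epsilon(t_\epsilon,x_\epsilon)\geq \chi_{\{x>(d\lambda_v^-+r/\lambda_v^-)t\}}(t_0,x_0)$. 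Passing to the limit $\epsilon\to 0$ then yields
\[
0 \leq \partial_t\varphi(t_0,x_0) + d|\partial_x\varphi(t_0,x_0)|^2 + r\bigl(1 - b\chi_{\{x>(d\lambda_v^- + r/\lambda_v^-)t\}}(t_0,x_0)\bigr),
\]
which is exactly the super-solution inequality for \eqref{eq:supsolutionw3}.

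The main subtlety, rather than a true obstacle, is the discontinuity of $\chi_{\{x>(d\lambda_v^-+r/\lambda_v^-)t\}}$ along the ray $x=(d\lambda_v^-+r/\lambda_v^-)t$. On this ray the indicator takes value $0$ as its lower semi-continuous envelope, and this is precisely the value delivered by $\liminf\tilde{u}^\epsilon$ via \eqref{eq:tildeu}, so the inequality remains correct. This is the same bookkeeping used implicitly in the proofs of Lemmas \ref{lem:sub-solutionw1} and \ref{lem:supsolutionw2} when passing to the half-relaxed limits. Given how closely the argument parallels the earlier proofs, I would present only the modifications above and refer to Lemma \ref{lem:sub-solutionw1} for the remaining standard details.
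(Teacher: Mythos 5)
Your overall strategy is correct and parallels the paper's intended route (the paper itself omits this proof, deferring to Lemma \ref{lem:sub-solutionw1}(b)). But there is a genuine gap in the interior verification: you restrict to strict local minima $(t_0,x_0)$ where $w_{3,*}(t_0,x_0)>0$. That restriction is part of the \emph{sub}-solution definition in Appendix \ref{SD}, not the super-solution one. For a super-solution of $\min\{\partial_t w + H, w\}=0$, both branches must be $\geq 0$, so the inequality $\partial_t\varphi + H^*(\cdot,\partial_x\varphi)\geq 0$ has to hold at \emph{every} strict local minimum of $w_{3,*}-\varphi$, including those where $w_{3,*}(t_0,x_0)=0$ (e.g.\ points on or inside the region where the limit vanishes). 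As written, you simply have not checked those points.

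The good news is that the restriction you imposed was never needed. You introduced it solely to get $\tilde v^\epsilon(t_\epsilon,x_\epsilon)=e^{-w_3^\epsilon(t_\epsilon,x_\epsilon)/\epsilon}\to 0$, but in the super-solution direction the term $-r\tilde v^\epsilon$ enters with the favorable sign, so the crude bound $\tilde v^\epsilon\geq 0$ already suffices. Concretely, from the viscosity inequality at the minimum $(t_\epsilon,x_\epsilon)$,
\begin{equation*}
\partial_t\varphi + d|\partial_x\varphi|^2 + r\bigl(1-b\tilde u^\epsilon - \tilde v^\epsilon\bigr) \geq \epsilon d\,\partial_{xx}\varphi,
\end{equation*}
and combining $\tilde v^\epsilon\geq 0$ with $\liminf_{\epsilon\to 0}\tilde u^\epsilon(t_\epsilon,x_\epsilon)\geq \chi_{\{x>(d\lambda_v^-+r/\lambda_v^-)t\}}(t_0,x_0)$ from \eqref{eq:tildeu} gives
\begin{equation*}
r\bigl(1-b\chi_{\{x>(d\lambda_v^-+r/\lambda_v^-)t\}}(t_0,x_0)\bigr) \geq r\bigl(1-b\tilde u^\epsilon - \tilde v^\epsilon\bigr) - o(1),
\end{equation*}
whence $\partial_t\varphi + d|\partial_x\varphi|^2 + r(1-b\chi) \geq \epsilon d\,\partial_{xx}\varphi - o(1)\to 0$, with no sign condition on $w_{3,*}(t_0,x_0)$. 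Delete the hypothesis $w_{3,*}(t_0,x_0)>0$, remove the (now unused) $\tilde v^\epsilon\to 0$ step, and the argument is complete. This sign asymmetry is exactly why the sub-solution proof of Lemma \ref{lem:sub-solutionw1} genuinely needs $w_1^*>0$, but the super-solution proofs here (and in Lemma \ref{lem:supsolutionw2}) do not. One minor further remark: to get $w_{3,*}(t,0)=0$ you need the analogue of \eqref{estamition3} (the bound $0\leq w_3^\epsilon\leq Q(\lambda_v^+x_-+\lambda_v^-x_++\epsilon)$), since \eqref{estamition1} alone only yields $0\leq w_{3,*}(t,0)\leq Qt$.
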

\begin{proof} The proof is similar to proof of Lemma \ref{lem:sub-solutionw1}(b) and is omitted.
\end{proof}

\begin{proposition}\label{prop:underlinec3}
Let $( u, v)$ be a solution of \eqref{eq:1-1} with initial data satisfying $\rm{(H_\lambda)}$. Then
$$\underline c_3\geq -\max\{\tilde c_{\rm LLW},\sigma_3\}.$$
where $\tilde c_{\rm LLW}$ and $\sigma_3$ are defined in Remark \ref{rmk:LLW} and \eqref{eq:sigma}, respectively.
\end{proposition}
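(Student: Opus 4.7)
The plan is to mirror the proof of Proposition \ref{prop:overlinec2} after reflection about $x=0$: working with $\tilde v(t,x)=v(t,-x)$, the transform $w_3^\epsilon=-\epsilon\log\tilde v^\epsilon$ is set up as in Lemma \ref{lem:sub-solutionw3}, and the goal is to construct a viscosity sub-solution of \eqref{eq:supsolutionw3} that propagates the exponential decay rate of $v_0$ at $-\infty$ through to the front at $x=c^*t$, where $c^*:=d\lambda_v^-+r/\lambda_v^-$ plays the role that $\sigma_1$ played for $w_2$.

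First I would construct a candidate $\underline w_3:[0,\infty)\times[0,\infty)\to[0,\infty)$ in direct analogy with \eqref{eq:underw2}, replacing $\lambda_u$ by $\lambda_v^-$ and the KPP constants by their $(d,r)$ counterparts. Concretely, when $\lambda_v^->\sqrt{r/d}$ the candidate is piecewise of the form linear--quadratic--zero with breakpoints along the rays $x=2d\lambda_v^- t$ and $x=2\sqrt{dr}\,t$, while when $\lambda_v^-\leq\sqrt{r/d}$ the two-piece form $\underline w_3(t,x)=\lambda_v^-\max\{x-c^*t,0\}$ suffices. Checking that $\underline w_3$ is a viscosity sub-solution of \eqref{eq:supsolutionw3} proceeds exactly as in Proposition \ref{prop:overlinec2}: the HJ equation holds classically on each smooth region, the viscosity test at the kinks reduces either to a direct computation or to the $\min\{\cdot,w\}$ clause (when $\underline w_3=0$), and the Hamiltonian's jump $r\mapsto r(1-b)$ across $x=c^*t$ produces the strict slack $-rb<0$ that keeps the sub-solution property across the discontinuity.

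Next I would apply Theorem \ref{thm:D}, together with Lemma \ref{lem:sub-solutionw3} and the matching of initial and boundary traces (Remark \ref{rmk:w1w20} adapted to $\tilde v$), to deduce $w_{3,*}\geq \underline w_3$ on $[0,\infty)\times[0,\infty)$. Translating this back to $\tilde v$ yields the large-deviation bound
$$
\tilde v(t,\hat c\, t)\leq \exp\bigl(-(\underline w_3(1,\hat c)+o(1))\,t\bigr)\qquad \text{as }t\to\infty,
$$
for each $\hat c>0$. Letting $\hat c\nearrow c^*$ and setting $\hat\mu:=\underline w_3(1,c^*)$ gives the exponential decay rate of $\tilde v$ at the $c^*$-front.

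Finally, I would invoke Lemma \ref{lem:appen1}(a) on the strip $0\leq x\leq \hat c\,t$, applied to $(\tilde u,\tilde v)$ with boundary traces $(\tilde u,\tilde v)(t,0)\to(k_1,k_2)$ from Proposition \ref{prop:1} and $(\tilde u,\tilde v)(t,\hat c\,t)\to(1,0)$ from Lemma \ref{lem:tildeuv}. In the reflected configuration this yields $-\underline c_3\leq c_{c^*,\hat\mu}$, and a case analysis identical in structure to Step 3 of the proof of Proposition \ref{prop:overlinec2} — with $c_{\rm LLW}$ replaced by $\tilde c_{\rm LLW}$, $\hat c_{\rm nlp}$ by $\sigma_3$, and the coefficients $(a,1)$ by $(b,r(1-b))$ — identifies $c_{c^*,\hat\mu}=\max\{\tilde c_{\rm LLW},\sigma_3\}$, giving the desired inequality. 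The main obstacle will be this algebraic identification, splitting according to whether $\hat\mu$ exceeds the threshold activating the bistable branch of $c_{c^*,\hat\mu}$; the degenerate regime $\lambda_v^-\leq\sqrt{r/d}$ (where $\underline w_3(1,c^*)=0$) must be handled by instead taking $\hat c>c^*$ in Lemma \ref{lem:appen1}(a) and exploiting the residual decay $\underline w_3(1,\hat c)=\lambda_v^-(\hat c-c^*)>0$ to recover both the $\tilde c_{\rm LLW}$ bound (by sending $\hat c\to\infty$ to activate the LLW branch) and the $\sigma_3$ bound (by sending $\hat c\searrow c^*$ and reading off the pulled-front formula).
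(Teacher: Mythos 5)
Your overall blueprint—reflect via $\tilde v(t,x)=v(t,-x)$, build a piecewise-linear sub-solution $\underline w_3$, use Theorem \ref{thm:D} to get $w_{3,*}\geq \underline w_3$, translate to a large-deviation bound on $\tilde v$, then feed into Lemma \ref{lem:appen1}—matches the paper, which uses the simpler two-piece candidate $\underline w_3(t,x)=\lambda_v^-\max\{x-c^*t,0\}$ (no quadratic piece is needed, since this is already a valid sub-solution of \eqref{eq:supsolutionw3} for any $\lambda_v^-$: on $\{x>c^*t\}$ one gets exactly $-rb<0$). The lemma you want is \ref{lem:appen1}(b), not (a), because the boundary trace at $x=\hat c\,t$ is $(1,0)$, not $(0,1)$.

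The genuine gap is in the choice of limit for $\hat c$. You transposed the $\hat c\nearrow\sigma_1$ step of Proposition \ref{prop:overlinec2}, but the geometry here is reversed: in that proposition the admissible range was $(\sigma_2,\sigma_1)$ (bounded above by $\sigma_1$), so one had to push $\hat c\nearrow\sigma_1$, whereas now Lemma \ref{lem:tildeuv} gives $(\tilde u,\tilde v)(t,\hat c\,t)\to(1,0)$ for \emph{all} $\hat c>c^*$, with no upper bound, and the correct move is $\hat c\to\infty$. Pushing $\hat c$ toward $c^*$ instead gives $\hat\mu=\underline w_3(1,\hat c)\to 0$ and hence $\tilde c_{\hat c,\hat\mu}\to\hat c\to c^*=d\lambda_v^-+r/\lambda_v^-$, which strictly overshoots $\sigma_3=d(\lambda_v^-\wedge\sqrt{r(1-b)/d})+r(1-b)/(\lambda_v^-\wedge\sqrt{r(1-b)/d})$ by at least $rb/\lambda_v^-$, so neither branch of the target $\max\{\tilde c_{\rm LLW},\sigma_3\}$ is recovered that way. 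The paper instead fixes the case split by comparing $\lambda_v^-$ to $\tilde\lambda_{\rm LLW}$ and sends $\hat c\to\infty$ in \emph{both} cases: when $\lambda_v^->\tilde\lambda_{\rm LLW}$ the quantity $\hat\mu_2-\tilde\lambda_{\rm LLW}(\hat c-\tilde c_{\rm LLW})$ grows linearly in $\hat c$ and eventually the LLW branch is activated; when $\lambda_v^-\leq\tilde\lambda_{\rm LLW}$ one shows $\lambda_{\hat c,\hat\mu_2}\to\lambda_v^-$ as $\hat c\to\infty$ and thus $\tilde c_{\hat c,\hat\mu_2}\to d\lambda_v^-+r(1-b)/\lambda_v^-=\sigma_3$. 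Replacing your $\hat c\searrow c^*$ and $\hat c\nearrow c^*$ steps with a single $\hat c\to\infty$ limit, and the case split on $\lambda_v^-$ versus $\tilde\lambda_{\rm LLW}$, closes the argument; the quadratic piece of $\underline w_3$ can be dropped, since only the linear asymptotics $\underline w_3(1,\hat c)\sim\lambda_v^-\hat c$ matter in the end.
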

\begin{proof}
\noindent{\bf Step 1.} To show
 \begin{equation}\label{eq:equivaw3}
 w_{3,*}(t, x)\geq \underline w_3(t,x) \quad \text{ for } [0,\infty)\times[0,\infty),
 \end{equation}
where $\underline w_3: [0,\infty)\times[0, \infty)$ is defined by
$$\underline w_3(t,x)=\lambda_v^-  \max\left\{x-(d\lambda_v^-+\frac{r}{\lambda_v^-})t, 0\right\}.$$
As in Step 1 of Proposition \ref{prop:overlinec2}, one can verify that $\underline{w}_3$ is a viscosity sub-solution of \eqref{eq:supsolutionw3}. By the expression of $\underline{w}_3$, Remark \ref{rmk:w1w20} and $w_{1,*}(t,-x)=w_{3,*}(t,x)$, we have
 $
\underline{w}_3(t,x) = \lambda_v^- \max\{x,0\}=w_{3,*}(t,x) \,\,\text{ on } \partial[ (0,\infty) \times (0,\infty)] $. Hence we apply Theorem \ref{thm:D} to obtain \eqref{eq:equivaw3}.

 \noindent{\bf Step 2.} To show for each $ \hat c \geq 0$, we have
 \begin{equation}\label{eq:ineqv}
 \tilde v(t,\hat c t)\leq \exp\{(\underline w_3(1,\hat c )+o(1))t\} \quad \text{ for } t\gg 1.
 \end{equation}
 This can be done as in Step 2 of Proposition \ref{prop:overlinec2}.

%

\noindent{\bf Step 3.} To show $\underline c_3\geq -\max\{\tilde c_{\rm LLW},\sigma_3\}$.

Fix $\hat c>(d\lambda^-_v+\frac{r}{\lambda^-_v})$.
By Proposition \ref{prop:1} and Lemma \ref{lem:tildeuv}, we arrive at
\begin{equation}\label{eq:asymuvc3}
\lim\limits_{t\to \infty}(\tilde{u},\tilde{v})(t,0)=\lim\limits_{t\to \infty}(u,v)(t,0)=(k_1,k_2) \text{ and }\lim\limits_{t\to \infty}(\tilde{u},\tilde{v})(t,\hat{c}t)=(1,0).
\end{equation}
This verifies condition {\rm{(i)}} of Lemma \ref{lem:appen1}\rm{(b)}. Next, by Step 2,
we have
$$\tilde{v}(t,\hat c t)\leq  \exp\{-(\hat\mu_2+ o(1)) t\} \quad \text{ for } t\gg 1,$$
where
\begin{equation}\label{eq:muc2}
{\hat\mu_2}=\underline w_3(1,\hat c)=\lambda_v^-(\hat c-(d\lambda_v^-+\frac{r}{\lambda_v^-})).
\end{equation}

We note for later purposes that  $\hat\mu_2$ is a quadratic expression in $\lambda_v^-$, so that
\begin{equation}\label{eq:lambdav-}
\hat\mu_2= \lambda_v^-\hat c-d(\lambda_v^-)^2-r, \quad \text{ and }\quad \lambda_v^- = \frac{\hat c- \sqrt{\hat c^2 - 4d(\hat\mu_2 + r)}}{2d}.
\end{equation}

We may then apply Lemma \ref{lem:appen1}\rm{(b)}  to conclude
\begin{equation}\label{eq:estimc3}
-\underline{c}_3 \leq 
\tilde{c}_{\hat{c},\hat\mu_2}= \left\{
\begin{array}{ll}
\medskip
\tilde c_{\rm LLW},& \text{ if } \hat\mu_2\geq \tilde{\lambda}_{\rm LLW} (\hat{c} - \tilde{c}_{\rm LLW}),\\
\hat c-\frac{2d\hat\mu_2}{\hat c-\sqrt{\hat c^2-4d[\hat\mu_2+r(1-b)]}},&  \text{ if } 0<\hat\mu_2<\tilde{\lambda}_{\rm LLW} (\hat{c} - \tilde{c}_{\rm LLW}).
\end{array}
\right.
\end{equation}
To complete the proof, we need to verify
$$\limsup_{\hat c\to\infty}\tilde c_{\hat c,{\hat\mu_2}}\leq\max\left\{\tilde{c}_{\rm LLW},\sigma_3\right\}.$$ 
Since $0  =-d\tilde{\lambda}^2_{\rm LLW}+\tilde{\lambda}_{\rm LLW}\tilde c_{\rm LLW}-r(1-b)$,  then
\begin{equation}\label{eq:mu2comp}
    \begin{split}
   \hat\mu_2-\tilde{\lambda}_{\rm LLW} (\hat c- \tilde{c}_{\rm LLW})
   =&\hat\mu_2-(-d\tilde{\lambda}^2_{\rm LLW}+\tilde{\lambda}_{\rm LLW}\hat c -r(1-b))\\
   =&\left (\lambda_v^--\tilde{\lambda}_{\rm LLW}\right)\left[\hat c- d(\lambda_v^-+\tilde{\lambda}_{\rm LLW})\right]-rb,
\end{split}
\end{equation}
where \eqref{eq:lambdav-} is used for the last inequality.

\begin{itemize}
\item [\rm{(i)}] For the case $\lambda_v^-> \tilde{\lambda}_{\rm LLW}$, we take $\hat c\to\infty$ in \eqref{eq:mu2comp} to get
  $$\hat\mu_2\geq \tilde{\lambda}_{\rm LLW} \left(\hat{c} - \tilde{c}_{\rm LLW}\right),$$
   so that by \eqref{eq:estimc3}, $-\underline{c}_3 \leq \tilde{c}_{\rm LLW}\leq\max\left\{\tilde{c}_{\rm LLW},\sigma_3\right\}$;

  \item [\rm{(ii)}] For the case $\lambda_v^-\leq\tilde{\lambda}_{\rm LLW}$, we have $\lambda_v^-\leq\tilde{\lambda}_{\rm LLW}\leq \sqrt{\frac{r(1-b)}{d}}$ and
      $$\sigma_3=d\lambda_{v}^-+\frac{r(1-b)}{\lambda_{v}^-}\geq d\tilde{\lambda}_{\rm LLW}+\frac{r(1-b)}{\tilde{\lambda}_{\rm LLW}}=\tilde{c}_{\rm LLW}.$$ 
      we have
 $$0<\hat\mu_2<\tilde{\lambda}_{\rm LLW} \left(\hat{c} - \tilde{c}_{\rm LLW}\right).$$
  Denote
$\lambda_{\hat{c},\hat\mu_2}=\frac{\hat c-\sqrt{\hat c^2-4d[\hat\mu_2+r(1-b)]}}{2d}$. Then
\begin{equation}\label{eq:ppp}
d(\lambda_{\hat{c},\hat\mu_2})^2-\hat c \lambda_{\hat{c},\hat\mu_2}+\hat\mu_2+r(1-b)=0,
\end{equation}
and $\lambda_{\hat{c},\hat\mu_2}\leq\lambda^-_v$ (by comparing with the second part of \eqref{eq:lambdav-}).
Hence, we arrive at
\begin{equation}\label{eq:crewrite}
-\underline{c}_3\leq c_{\hat{c},\hat\mu_2}= \hat{c} - \frac{\hat\mu_{2}}{\lambda_{\hat{c}, \hat{\mu}_2}} = d\lambda_{\hat{c},\hat\mu_2}+\frac{r(1-b)}{\lambda_{\hat{c},\hat\mu_2}}.
\end{equation}

Next, we claim that
\begin{equation}\label{eq:pppp}
\lim_{\hat c\to\infty}\lambda_{\hat{c},\hat\mu_2}=\lambda^-_v.
\end{equation}
 To this end,
subtract the first part of \eqref{eq:lambdav-} from \eqref{eq:ppp} to get
$$d(\lambda_{\hat{c},\hat\mu_2})^2-\hat c (\lambda_{\hat{c},\hat\mu_2}-\lambda^-_v)-d(\lambda^-_v)^2-rb=0.$$
Dividing the above by $\hat{c}$ and letting $\hat{c} \to \infty$, we obtain \eqref{eq:pppp}.

By \eqref{eq:pppp}, we can take $\hat{c} \to \infty$ in \eqref{eq:crewrite} to get $-\underline{c}_3\leq \sigma_3\leq\max\left\{\tilde{c}_{\rm LLW},\sigma_3\right\}$.

\end{itemize}
This completes the proof of Proposition \ref{prop:underlinec3}.
\end{proof}

\subsection{Proof of  Theorem \ref{thm:1-2}}
\begin{proof}[Proof of Theorem \ref{thm:1-2}]
For $i=1,2,3$, let  $\overline{c}_i, \underline{c}_i$ be the maximal and minimal spreading speeds defined in \eqref{eq:speeds}.
It follows from definition directly that $\overline{c}_i \geq \underline{c}_i$.
By Corollary \ref{cor:underlinec1}, we have $\overline{c}_1 = \underline{c}_1= \sigma_1$. By Proposition  \ref{prop:1}(ii) and Lemma \ref{lem:underlinec2}, we arrive at  $\underline{c}_2 \geq \max\{c_{\rm LLW}, \hat{c}_{\rm nlp}\}$, which, together with $\overline{c}_2 \leq \max\{c_{\rm LLW}, \hat{c}_{\rm nlp}\}$  in Proposition \ref{prop:overlinec2}, we have $\overline{c}_2 = \underline{c}_2 = \max\{c_{\rm LLW}, \hat{c}_{\rm nlp}\}$. Moreover, combining with Propositions \ref{prop:1} and \ref{prop:underlinec3} gives $\overline{c}_3  = \underline{c}_3 =- \max\{\sigma_3, \tilde{c}_{\rm LLW}\}$.
Recalling the $c_i$ as defined in \eqref{eq:c123}, we have $\overline{c}_i = \underline{c}_i = c_i$ for all $i=1,2,3$. To complete the proof of Theorem \ref{thm:1-2}, it remains to show \eqref{eq:spreadingly}.

Observe that the first two items of \eqref{eq:spreadingly} is a direct consequence of  Corollary \ref{cor:underlinec1}. Next, we shall show that
\begin{equation}\label{positivev}
    \displaystyle \liminf_{t \to \infty} \inf_{(c_3 + \eta)t < x < (\sigma_1 - \eta)t} v(t,x) >0 \quad \text{ for  small } \eta>0.
\end{equation}

Given some small $\eta>0$, definitions of $\overline{c}_3$ and $\underline{c}_1$ imply the existence of $c_3' \in (c_3, c_3 +\eta)$, $\sigma_1' \in (\sigma_1 - \eta, \sigma_1)$ and $T>0$ such that
$$
\inf_{t \geq T}\min\{v(t, {c}_3' t), v(t, \sigma'_1 t)\} >0.
$$
Now, define
\begin{equation*}
    \begin{array}{l}
     \delta:= \min\left\{\frac{1-b}{2},\, \inf\limits_{c_3' T < x < \sigma'_1 T}v(T,x),\,\inf\limits_{t \geq T}\min\{v(t, {c}_3' t), v(t, \sigma'_1 t)\} \right\} >0.
     \end{array}
\end{equation*}
Observe that $v(t,x)$ and $\delta$ form a pair of super- and sub-solutions to the KPP-type equation $\partial_t v = d\partial_{xx}v + rv(1-b - v)$ such that $v(t,x) \geq \delta$ on the parabolic boundary of the domain $\Omega:=\{(t,x): t \geq T,\,c_3' t < x < \sigma'_1 t \}$. It follows from the maximum principle that $v \geq \delta$ in $\Omega$. In particular, \eqref{positivev} holds.

Similarly, we can show that
\begin{equation}\label{positiveu}
    \displaystyle \liminf_{t \to \infty} \inf_{x <(c_2-\eta)t} u(t,x) >0\quad \text{ for small }\eta>0,
\end{equation}
by definition of $\underline c_2$ and by \eqref{eq:tildeukpp} in Lemma \ref{lem:tildeuv}.
Fix small $\eta>0$.
In view of \eqref{positivev} and \eqref{positiveu}, the third item of \eqref{eq:spreadingly} holds by applying {\rm{(a)}} and {\rm{(c)}} in Lemma \ref{lem:entire1}.  Finally, since  \eqref{positiveu} and $\lim\limits_{t\to \infty } \sup\limits_{x<(c_3-\eta)t} v=0$ (by definition of $\underline c_3$), by applying Lemma \ref{lem:entire1}{\rm{(b)}}, the fourth
item of \eqref{eq:spreadingly} holds true. The proof of Theorem \ref{thm:1-2} is now complete.
\end{proof}

%
\section{The invasion mode due to Tang and Fife}\label{S4b}
In this section, we assume $\sigma_1=\sigma_2$ and prove Theorem \ref{thm:1-2b}.  

\begin{proof}[Proof of Theorem \ref{thm:1-2b}] 
For any small $\delta\in(0,1)$, let $(\underline{u}^\delta,\overline{v}^\delta)$ and  $(\overline{u}^\delta,\underline{v}^\delta)$ be respectively any  solution of
\begin{equation}\label{eq:1-1'}
\left\{
\begin{array}{ll}
\medskip
\partial_t u-\partial_{xx}u=u(1-u-av),& \text{ in }(0,\infty)\times \mathbb{R},\\
\partial _t v-d\partial_{xx}v=rv(1+\delta-bu-v),& \text{ in }(0,\infty)\times \mathbb{R},\\
\end{array}
\right .
\end{equation}
and
\begin{equation}\label{eq:1-1''}
\left\{
\begin{array}{ll}
\medskip
\partial_t u-\partial_{xx}u=u(1-u-av),& \text{ in }(0,\infty)\times \mathbb{R},\\
\partial _t v-d\partial_{xx}v=r v(1-\delta-bu-v),& \text{ in }(0,\infty)\times \mathbb{R},\\
\end{array}
\right .
\end{equation}
with initial data satisfying $\rm(H_\lambda)$. By comparison, we deduce that
\begin{equation}\label{eq:uvsupersub}
(\underline{u}^\delta,\overline{v}^\delta)\preceq (u,v)\preceq (\overline{u}^\delta,\underline{v}^\delta) \,\, \text{ in } [0,\infty)\times\mathbb{R}.
\end{equation}

Notice that $(\underline{u}^\delta,\overline{v}^\delta)$ is a solution of \eqref{eq:1-1'} if and only if
\begin{equation}\label{scaling1}
  (\underline{U}^\delta,\overline V^\delta)= \left(\underline{u},\frac{\overline{v}^\delta}{1+\delta}\right)
\end{equation}
 is a solution of
 \begin{equation}\label{eq:1-1'trs}
\left\{
\begin{array}{ll}
\medskip
\partial_t U-\partial_{xx}U=U(1-U-\overline{a}^\delta V),& \text{ in }(0,\infty)\times \mathbb{R},\\
\partial _t V-d\partial_{xx}V=\overline{r}^\delta V(1-\underline{b}^\delta U-V),& \text{ in }(0,\infty)\times \mathbb{R},\\
\end{array}
\right .
\end{equation}
where $\overline{a}^\delta=(1+\delta)a,\, \overline{r}^\delta=(1+\delta)r$ and $ \underline{b}^\delta=\frac{b}{1+\delta}$.
Observe that $\overline\sigma_1^\delta=d(\lambda_v^+\wedge \sqrt{\frac{ \overline{r}^\delta}{d}})+\frac{ \overline{r}^\delta}{\lambda_v^+\wedge\sqrt{\frac{ \overline{r}^\delta}{d}}}>\sigma_1=\sigma_2$ and  $0<\overline{a}^\delta,\underline{b}^\delta<1$ by choosing $\delta$ small enough. By applying Theorem \ref{thm:1-2} to \eqref{eq:1-1'trs}, we deduce that the rightward and leftward spreading speeds $\overline{c}^\delta_1$  and $\underline{c}^\delta_3$ of $\overline{V}^\delta$ (which is the same as $\overline{v}^\delta$), and the rightward spreading speed $\underline{c}^\delta_2$ of $\underline{U}^\delta$ (same as $\underline{u}^\delta$) exist. Furthermore, they can be characterized by
$$\overline{c}_1^\delta = \overline\sigma_1^\delta, \quad \underline{c}_2^\delta = \max\{\underline{c}_{\textup{LLW}}^\delta, \underline{\hat{c}}_{\textup{nlp}}^\delta\},\quad \underline{c}_3^\delta = -\max\{\overline{\tilde{c}}_{\textup{LLW}}^\delta,\overline{\sigma}_3^\delta\}.
$$
Precisely,
$\underline{c}_{\textup{LLW}}^\delta$ 
$(\text{resp.}~\overline{\tilde{c}}_{\textup{LLW}}^\delta)$ 
is the spreading speed for \eqref{eq:1-1'trs}  
as given in Theorem \ref{thm:LLW} $($resp. Remark \ref{rmk:LLW}$)$,  $\overline\sigma_3^\delta=d(\lambda_v^-\wedge \sqrt{\frac{ \overline{r}^\delta}{d}})+\frac{ \overline{r}^\delta(1-\underline{b}^\delta)}{\lambda_v^+\wedge\sqrt{\frac{ \overline{r}^\delta}{d}}}$ and moreover
\begin{equation}\label{eq:hcaccdelta}
\underline{\hat{c}}_{{\rm{nlp}}} ^\delta= \begin{cases}
\frac{\overline\sigma_1^\delta}{2} - \sqrt{\overline{a}^\delta} + \frac{1-\overline{a}^\delta}{\frac{\overline\sigma_1^\delta}{2} - \sqrt{\overline{a}^\delta}}, & \text{ if }\overline\sigma_1^\delta < 2\lambda_u\,\, \text{ and }\,\, \overline\sigma_1^\delta \leq 2 (\sqrt{\overline{a}^\delta} + \sqrt{1-\overline{a}^\delta}),\\
\overline{\tilde{\lambda}}_{{\rm{nlp}}}^\delta + \frac{1-\overline{a}^\delta}{\overline{\tilde{\lambda}}_{{\rm{nlp}}}^\delta}, & \text{ if }\overline\sigma_1^\delta  \geq 2\lambda_u \,\,\text{ and }\,\, \overline{\tilde{\lambda}}_{{\rm{nlp}}}^\delta \leq \sqrt{1-\overline{a}^\delta},\\
2\sqrt{1-\overline{a}^\delta}, &\text{ otherwise,}
\end{cases}
\end{equation}
where
$
\overline{\tilde{\lambda}}_{{\rm{nlp}}}^\delta= \frac{1}{2}\left[\overline\sigma_1^\delta- \sqrt{(\overline\sigma_1^\delta-2\lambda_u)^2 + 4\overline{a}^\delta}\right].
$
Now, by the relation \eqref{eq:uvsupersub}, we can compare with the spreading speeds $\overline{c}_1$, $\underline{c}_2$ and $\underline{c}_3$ of $(u,v)$:
\begin{equation}\label{inequality1}
  \overline{c}_1\leq \overline{c}_{1}^\delta,\quad \underline{c}_2\geq \underline{c}_2^\delta\quad \text{ and } \quad \underline{c}_3\geq \underline{c}_{3}^\delta.
\end{equation}
It remains to show that, assuming $\sigma_1 = \sigma_2$, we have $\underline{\hat{c}}_{{\rm{nlp}}}^\delta\to \sigma_2$ as $\delta\to 0$. Divide into the two cases:
\begin{itemize}
\item[\rm{(i)}] If $\lambda_u>1$, then $\sigma_1=\sigma_2=2<2\lambda_u$. Since $1<\sqrt{a}+\sqrt{1-a}$, by choosing  $\delta>0$ small enough, we get  $\overline\sigma_1^\delta<2\lambda_u$ and $\overline\sigma_1^\delta \leq 2 (\sqrt{\overline{a}^\delta} + \sqrt{1-\overline{a}^\delta})$, which implies $\underline{\hat{c}}_{{\rm{nlp}}}^\delta=\frac{\overline\sigma_1^\delta}{2} - \sqrt{\overline{a}^\delta} + \frac{1-\overline{a}^\delta}{\frac{\overline\sigma_1^\delta}{2} - \sqrt{\overline{a}^\delta}} \to 1-\sqrt{a} + \frac{1-a}{1-\sqrt{a}} = 2 = \sigma_2$ as $\delta \to 0$.

\item[\rm{(ii)}] If $\lambda_u\leq 1$, then first  claim that
\begin{equation}\label{eq:firstclaimm}
\overline\sigma_1^\delta \geq \sigma_1 \geq 2\lambda_u,
\end{equation}
which is due to $\overline\sigma_1^\delta\geq\sigma_1=\sigma_2=\lambda_u+\frac{1}{\lambda_u}\geq 2\geq 2\lambda_u.$

Next, we claim that
\begin{equation}\label{eq:firstclaim}
\tilde\lambda_{\rm nlp} < \sqrt{1-a}, 
\end{equation}
where $\tilde\lambda_{\rm nlp}$ is given in \eqref{eq:lambdaacc}. To this end, observe that
\begin{equation}\label{eq:sss3}
\sigma_1-2\sqrt{1-a}<\sqrt{(\sigma_1-2\lambda_u)^2 + 4{a}}
\end{equation}
which is a consequence of
\begin{align*}
\begin{split}
(\sigma_1-2\sqrt{1-a})^2-\left[(\sigma_1-2\lambda_u)^2 + 4{a}\right]
&=4(2-2a-\sigma_1\sqrt{1-a})\\
&\leq4(2-2a-2\sqrt{1-a})< 0.
\end{split}
\end{align*}
From definition of $\tilde\lambda_{\rm nlp} = \frac{1}{2}[\sigma_1 - \sqrt{(\sigma_1 - 2\lambda_u)^2 + 4a}]$, we deduce \eqref{eq:firstclaim}.

By \eqref{eq:firstclaimm} and \eqref{eq:firstclaim}, we have $\bar\sigma_1^\delta \geq 2\lambda_u$ and $\overline{\tilde\lambda}^\delta_{\rm nlp} < \sqrt{1-\overline{a}^\delta}$ for $\delta$ small,  so
$$
\underline{\hat{c}}^\delta_{\rm nlp} = \overline{\tilde{\lambda}}_{{\rm{nlp}}}^\delta + \frac{1-\overline{a}^\delta}{\overline{\tilde{\lambda}}_{{\rm{nlp}}}^\delta}  \to  \tilde\lambda_{\rm nlp} + \frac{1-a}{\tilde\lambda_{\rm nlp}} \quad \text{ as } \delta \to 0.
$$
Since we want $\underline{\hat{c}}^\delta_{\rm nlp} \to \sigma_2$, it remains to show that $\sigma_2 =  \tilde\lambda_{\rm nlp} + \frac{1-a}{\tilde\lambda_{\rm nlp}}$. To this end, recall, from the definition of $\tilde\lambda_{\rm nlp}$ \eqref{eq:lambdaacc}, that 
$$
\tilde\lambda_{\rm nlp} = \frac{\sigma_1 - \sqrt{(\sigma_1 - 2\lambda_u)^2 + 4a}}{2} = \frac{2(\sigma_1 \lambda_u - \lambda_u^2 - a)}{\sigma_1 + \sqrt{(\sigma_1 - 2\lambda_u)^2 + 4a}}.
$$
Using $\sigma_1= \sigma_2 = \lambda_u + \frac{1}{\lambda_u}$, we deduce
\begin{equation}\label{eq:sss2}
\tilde\lambda_{\rm nlp} =\frac{\sigma_2 - \sqrt{(\sigma_2 - 2\lambda_u)^2 + 4a}}{2} = \frac{2(1-a)}{\sigma_2+  \sqrt{(\sigma_2 - 2\lambda_u)^2 + 4a}}.
\end{equation}
This implies $\sigma_2 =  \tilde\lambda_{\rm nlp} + \frac{1-a}{\tilde\lambda_{\rm nlp}}$. The proof is now complete.
\end{itemize}

Hence, by the continuity of  $\underline{c}_{\textup{LLW}}^\delta$ and $\overline{\tilde{c}}_{\textup{LLW}}^\delta$ in $\delta$ (see, e.g. \cite[Theorem 4.2 of Ch. 3]{Volpert_1994}), letting $\delta\to 0$ in \eqref{inequality1} yields
\begin{equation}\label{eq:upperci}
\overline{c}_1\leq \sigma_1,\quad \underline{c}_2\geq \sigma_2 \quad \text{ and }\quad \underline{c}_3\geq - \max\{ \tilde{c}_{\rm LLW}, \sigma_3\}.
\end{equation}

By a quite similar process, we can obtain
$(\overline{u}^\delta,\underline{v}^\delta)$ is a solution of \eqref{eq:1-1''} if and only if
 $$(\overline{U}^\delta,\underline V^\delta)=\left(\overline{u}^\delta,\frac{\underline{v}^\delta}{1-\delta}\right)$$
 is a solution of
 \begin{equation}\label{eq:1-1''trs}
\left\{
\begin{array}{ll}
\medskip
\partial_t U-\partial_{xx}U=U(1-U-\underline{a}^\delta V),& \text{ in }(0,\infty)\times \mathbb{R},\\
\partial _t V-d\partial_{xx}V=\underline{r}^\delta V(1-\overline{b}^\delta U-V),& \text{ in }(0,\infty)\times \mathbb{R},
\end{array}
\right .
\end{equation}
where $\underline{a}^\delta=(1-\delta)a$, $ \underline{r}^\delta=(1-\delta)r$ and $\overline{b}^\delta=\frac{b}{1-\delta}$. Observe that $\underline\sigma_1^\delta=d(\lambda_v^+\wedge \sqrt{\frac{ \underline{r}^\delta}{d}})+\frac{ \underline{r}^\delta}{\lambda_v^+\wedge\sqrt{\frac{ \underline{r}^\delta}{d}}}<\sigma_1=\sigma_2$ and  $0<\underline{a}^\delta,\overline{b}^\delta<1$ for small $\delta$.  By exchanging the roles of $u$ and $v$ in \eqref{eq:1-1}, we may follow the arguments above, and
apply Theorem \ref{thm:1-2} once again to deduce that
\begin{equation}\label{eq:lowerci}
\underline{c}_1\geq \sigma_1, \quad \overline{c}_2\leq \sigma_2 \quad\text{ and}  \quad \overline{c}_3\leq - \max\{ \tilde{c}_{\rm LLW}, \sigma_3\}.
\end{equation}
Theorem \ref{thm:1-2b} follows from combining $\underline{c}_i\leq \overline{c}_i$, \eqref{eq:upperci}, \eqref{eq:lowerci} and $\sigma_1=\sigma_2$.
\end{proof}

\section{The case $0<a<1<b$ due to Girardin and Lam}\label{S5}

The Hamilton-Jacobi approach, which we have so far applied to study the weak competition case ($0<a,b<1$), can also be applied to tackle the case ($0<a<1<b$), which was previously studied by Girardin and the third author \cite{Girardin_2018}. This provides an alternative approach which is more transparent than the involved construction of global super- and sub-solutions for the Cauchy problem, as was done in \cite{Girardin_2018}.
%
By arguing similarly as in Theorem \ref{thm:1-2}, one can prove the following result.
\begin{theorem}\label{thm:1-3'}
Assume $0<a < 1 < b$ and $\sigma_1>\sigma_2$. Let $(u,v)$ be the solution of \eqref{eq:1-1} such that the initial data satisfies
$\mathrm{(H_\lambda)}$. 
Then there exist $c_1, c_2\in (0,\infty)$ such that $c_1 > c_2$ and,
for each small $\eta>0$, the following spreading results hold:
\begin{equation}\label{spreadingpro}
\begin{cases}
\lim\limits_{t\rightarrow \infty} \sup\limits_{ x>(c_{1}+\eta) t} (|u(t,x)|+|v(t,x)|)=0, \\
\lim\limits_{t\rightarrow \infty} \sup\limits_{(c_2+\eta) t< x<(c_{1}-\eta) t} (|u(t,x)|+|v(t,x)-1|)=0, \\
\lim\limits_{t\rightarrow \infty} \sup\limits_{ x<(c_2-\eta) t} (|u(t,x)-1|+|v(t,x)|)=0. \\
\end{cases}
\end{equation}
Precisely, the spreading speeds $c_1$ and $c_2$ can be determined as follows:
$$
c_1 = \sigma_1 , \quad c_2 = \max\{\hat{c}_{\rm LLW}, \hat{c}_{\rm nlp}\},$$
where $\sigma_1$ is defined in \eqref{eq:sigma}, $\hat{c}_{\rm LLW}$ denotes the minimal wave speed of \eqref{eq:1-1} connecting $(1,0)$ with $(0,1)$ and $\hat{c}_{\rm nlp}$ is given by
\begin{equation}\label{eq:hcacc'}
\hat{c}_{{\rm{nlp}}} = \begin{cases}
\frac{\sigma_1}{2} - \sqrt{a} + \frac{1-a}{\frac{\sigma_1}{2} - \sqrt{a}}, & \text{ if }\sigma_1 < 2\lambda_u\,\, \text{ and }\,\, \sigma_1 \leq 2 (\sqrt a + \sqrt{1-a}),\\
\tilde\lambda_{{\rm{nlp}}} + \frac{1-a}{\tilde\lambda_{{\rm{nlp}}}}, & \text{ if }\sigma_1  \geq 2\lambda_u \,\,\text{ and }\,\, \tilde\lambda_{{\rm{nlp}}} \leq \sqrt{1-a},\\
2\sqrt{1-a}, &\text{ otherwise,}
\end{cases}
\end{equation}
with
\begin{equation}\label{eq:lambdaacc'}
\tilde\lambda_{{\rm{nlp}}} = \frac{1}{2}\left[\sigma_1 - \sqrt{(\sigma_1-2\lambda_u)^2 + 4a}\right].
\end{equation}
\end{theorem}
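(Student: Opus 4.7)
The plan is to follow the scheme of Theorem~\ref{thm:1-2} closely, exploiting the fact that the Hamilton-Jacobi analysis for the slower species $u$ depends only on $a$, $\lambda_u$, $\sigma_1$ and not on $b$. The structural simplification brought by $b > 1$ is that $(1,0)$ is stable and $(0,1)$ is unstable to $u$-invasion, so there is neither a coexistence state nor a leftward front for $v$; the state behind the second front is $(1,0)$ rather than $(k_1,k_2)$. I would organize the proof around the four speeds $\overline c_i, \underline c_i$ ($i=1,2$) in \eqref{eq:speeds} and show they coincide in pairs. First, $\overline c_1 \leq \sigma_1$ and $\overline c_2 \leq \sigma_2$ follow from the scalar Fisher-KPP comparison as in Proposition~\ref{prop:1}(i). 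The bound $\underline c_2 \geq \hat c_{\rm LLW}$ follows by comparing $(u,v)$ from above with the solution of \eqref{eq:1-1} starting from $(\tilde u_0, 1)$ for some compactly supported $\tilde u_0 \leq u_0$; monotone systems theory in the strong competition case ensures this solution spreads $u$ into $(0,1)$ at the monostable minimal wave speed $\hat c_{\rm LLW}$.

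Next I would port verbatim the Hamilton-Jacobi lower bounds $\underline c_1 \geq \sigma_1$ (Lemmas~\ref{lem:sub-solutionw1}--\ref{lem:underlinec1}) and $\underline c_2 \geq \hat c_{\rm nlp}$ (Lemma~\ref{lem:underlinec2}): the limiting Hamilton-Jacobi problems for the half-relaxed limits of $w_1^\epsilon, w_2^\epsilon$ and the piecewise explicit super-solutions $\overline w_1, \overline w_2$ involve only $a, \lambda_u, \lambda_v^\pm, \sigma_1, \sigma_2$, which is precisely why \eqref{eq:hcacc'} and \eqref{eq:hcacc} take identical formal shape. Combined with Corollary~\ref{cor:underlinec1}, this gives $c_1 = \sigma_1$ and the first two lines of \eqref{spreadingpro}. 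For the remaining upper bound $\overline c_2 \leq \max\{\hat c_{\rm LLW}, \hat c_{\rm nlp}\}$, I would repeat Steps~1--2 of Proposition~\ref{prop:overlinec2} using the same sub-solution $\underline w_2$ from \eqref{eq:underw2}, obtaining for $\hat c_\delta := \sigma_1 - \delta$ and $\hat\mu_\delta := \underline w_2(1, \hat c_\delta)$ the large-deviation estimate
\[
u(t, \hat c_\delta t) \leq \exp\bigl(-(\hat\mu_\delta + o(1))\,t\bigr) \qquad \text{for } t \gg 1,
\]
together with the identities \eqref{eq:4.3b}--\eqref{eq:4.3c}. On the strip $\{0 \leq x \leq \hat c_\delta t\}$, Proposition~\ref{prop:1} combined with $c_1 = \sigma_1$ and the scalar KPP domination force $(u,v)(t,0) \to (1,0)$ and $(u,v)(t, \hat c_\delta t) \to (0,1)$; the strong-competition analog of Lemma~\ref{lem:appen1}(a) then controls $\overline c_2$ by $c_{\hat c_\delta, \hat\mu_\delta}$, and letting $\delta \to 0$ and simplifying $c_{\sigma_1, \hat\mu}$ exactly as in Step~3 of Proposition~\ref{prop:overlinec2} yields the claim. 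The third line of \eqref{spreadingpro} then follows from $\underline c_2 = c_2$ (which places $u$ uniformly above zero in $\{x < (c_2-\eta)t\}$), the scalar KPP bound on $v$, and the fact that $b > 1$ together with $u$ bounded away from $0$ drives $v \to 0$ via the $-rbuv$ term; a standard entire-solution argument via Lemma~\ref{lem:entire1} closes the convergence to $(1,0)$.

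The hard part will be the strong-competition adaptation of Lemma~\ref{lem:appen1}(a). Its proof in the appendix is tailored to the weak-competition wave connecting $(k_1,k_2)$ to $(0,1)$ with speed $c_{\rm LLW}$; here one must instead build the super-solution from the monostable traveling wave connecting $(1,0)$ to $(0,1)$ with minimal speed $\hat c_{\rm LLW}$ and decay rate $\hat\lambda_{\rm LLW} = \tfrac12\bigl(\hat c_{\rm LLW} - \sqrt{\hat c_{\rm LLW}^2 - 4(1-a)}\bigr)$, and verify that the exponentially decaying perturbation remains a parabolic super-solution under the prescribed boundary data $(1,0)$ at $x=0$ and $(0,1)$ at $x=\hat c_\delta t$ with decay rate $\hat\mu_\delta$ on the right boundary. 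Since the relevant quadratic dispersion relation has the same shape as in the weak-competition setting (only the meaning of $\lambda_{\rm LLW}$ changes), the formula for $c_{\hat c_\delta, \hat\mu_\delta}$ transcribes literally with $\hat c_{\rm LLW}, \hat\lambda_{\rm LLW}$ replacing $c_{\rm LLW}, \lambda_{\rm LLW}$, and the algebraic computation identifying $c_{\sigma_1, \hat\mu} = \max\{\hat c_{\rm LLW}, \hat c_{\rm nlp}\}$ proceeds exactly as in the two cases analysed after \eqref{estimc2}. Modulo this single adaptation, whose proof is a direct transcription of the appendix argument, the rest is a routine parallel of the weak-competition analysis.
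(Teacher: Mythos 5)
Your strategy of re-running the $w_2$ Hamilton--Jacobi analysis and replacing the $(k_1,k_2)$ anchoring of Lemma~\ref{lem:appen1} by $(1,0)$ (with $\hat{c}_{\rm LLW}$, $\hat\lambda_{\rm LLW}$ in place of $c_{\rm LLW}$, $\lambda_{\rm LLW}$) is the right one, and you correctly flag the appendix adaptation as the main place where $b>1$ must enter. But the claim that Lemmas~\ref{lem:sub-solutionw1}--\ref{lem:underlinec1} ``port verbatim'' is wrong, and this is a genuine gap. The limiting equation \eqref{eq:sub-solutionw1} for $w_1^*$ \emph{does} involve $b$ through the coefficient $r(1-b\chi_{\{x\leq\sigma_2 t\}})$, so it is not true that the $w_1$ problem ``involves only $a, \lambda_u, \lambda_v^\pm, \sigma_1, \sigma_2$.'' When $b>1$ that coefficient is negative on $\{0<x<\sigma_2 t\}$, and the super-solution $\overline{w}_1$ constructed in Lemma~\ref{lem:underlinec1} fails there: $\overline{w}_1$ vanishes identically on a neighbourhood of any interior point $(t_0,x_0)$ with $0<x_0<\sigma_2 t_0$, so a smooth $\varphi$ touching from below has $D\varphi(t_0,x_0)=0$, and the super-solution requirement reduces to $r(1-b)\geq 0$, which is false. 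Independently, the endgame of Lemma~\ref{lem:underlinec1} invokes Lemma~\ref{lem:underlinec}(b), whose output $\liminf v^\epsilon \geq 1-b\limsup u^\epsilon$ is vacuous when $b>1$ on any compact set where $u^\epsilon$ is not already known to vanish. Finally, the a priori bound \eqref{estamition1} on $w_1^\epsilon$ in Lemma~\ref{lem:3-1'} relies through \eqref{Q} on $\sup_t[-\log v(t,0)]<\infty$, which comes from \eqref{eq:spreadingly2.1} and the existence of $(k_1,k_2)$; when $b>1$ one has $v(t,0)\to 0$ and this bound degenerates, so the half-relaxed limits are not even under control on the fixed quarter-plane $(0,\infty)\times(0,\infty)$. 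Since Corollary~\ref{cor:underlinec1}, Lemma~\ref{lem:supsolutionw2} and Proposition~\ref{prop:overlinec2} all consume $\underline{c}_1\geq\sigma_1$ and the resulting estimate $\liminf v^\epsilon\geq\chi_{\{\sigma_2 t<x<\sigma_1 t\}}$, the upper bound on $\overline{c}_2$ inherits the gap.

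The repair is the one used in Section~\ref{S6} (Steps 2--5 of the proof there): set up the comparison for $w_1^\epsilon$ on a moving half-line $\{x>c't\}$ with $c'\in(\sigma_2,\sigma_1)$, where the coefficient is simply $+r$ and $\overline{w}_1$ is a genuine super-solution, after first establishing $\liminf_{t\to\infty} v(t,c't)>0$ by a direct scalar KPP sub-solution comparison in that moving strip (using only the already-available fact that $u\to 0$ uniformly on $\{x>(\sigma_2+\eta)t\}$, not the $w_1$-Hamilton--Jacobi machinery). Lemma~\ref{lem:underlinec}(b) must then be applied only on compact subsets of $\{\sigma_2<x/t<\sigma_1\}$, where $\limsup u^\epsilon=0$ makes the conclusion non-vacuous. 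You should also say explicitly that Lemma~\ref{lem:entire1} has to be re-proved with $(1,0)$ replacing $(k_1,k_2)$ (using that for $a<1<b$ the Lotka--Volterra ODE attracts $\{U>0, V\leq 1\}$ to $(1,0)$); your sketch of the third spreading relation relies on this but hand-waves past it, and merely having $u$ bounded below by a small $\delta>0$ together with $b>1$ is not by itself enough to drive $v\to 0$ unless $b\delta>1$ --- the convergence $u\to 1$ and $v\to 0$ must come out of the entire-solution / ODE argument simultaneously. Minor point: your comparison that gives $\underline{c}_2\geq\hat{c}_{\rm LLW}$ should read ``from below'' in the competitive order: $(\tilde u_0,1)\preceq(u_0,v_0)$ implies the $u$-component of the comparison solution lies below $u$.
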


By Theorem \ref{thm:1-3'}, the spreading speed $c_2$ is determined by  $\sigma_1$ (i.e., $c_1$) and $\lambda_u$.  In what follows, we explore the relation of $c_2$ and $\sigma_1$ for fixed $\lambda_u$.  Define the following auxiliary functions:
$$f(\sigma_1)=\frac{\sigma_1}{2} - \sqrt{a} + \frac{1-a}{\frac{\sigma_1}{2} - \sqrt{a}}, \quad g(\sigma_1)=\tilde\lambda_{{\rm{nlp}}} + \frac{1-a}{\tilde\lambda_{{\rm{nlp}}}},$$
where $\tilde\lambda_{{\rm{nlp}}}$ is given by \eqref{eq:lambdaacc'}. It is easily seen that
 $f$ is decreasing and bijective in $[2\sqrt{1-a}, 2(\sqrt{1-a}+\sqrt{a})]$, while $g$ is decreasing and bijective in
 \begin{equation*}
    \begin{cases}
      \medskip
      \left[2\sqrt{1-a},\lambda_u+\sqrt{1-a}+\frac{1-a}{\lambda_u-\sqrt{1-a}}\right] &\text{ as } \lambda_u\geq\sqrt{1-a}, \\
     \left(\lambda_u+\sqrt{1-a}+\frac{1-a}{\lambda_u-\sqrt{1-a}},\infty\right) & \text{ as } \lambda_u< \sqrt{1-a}.
    \end{cases}
 \end{equation*}
More precisely, it follows that
 \begin{equation*}
  \begin{cases}
  \medskip
f^{-1}(c_2)=c_2-\sqrt{c_2^2-4(1-a)}+2\sqrt{a},\\
g^{-1}(c_2)=\lambda_u+\frac{c_2-\sqrt{c_2^2-4(1-a)}}{2}+\frac{a}{\lambda_u-\frac{c_2-\sqrt{c_2^2-4(1-a)}}{2}}.
\end{cases}
\end{equation*}

 In view of $\tilde\lambda_{{\rm{nlp}}}\to\lambda_u$ as $\sigma_1\to\infty$, 
  $g_{\infty}:=g(\infty)=\lambda_u+\frac{1-a}{\lambda_u}$. For fixed $\lambda_u$ and varied $\lambda_v^+$ (or $\sigma_1$), by Theorem \ref{thm:1-3'} we can rewrite the spreading speed $c_2$  as follows.
\begin{itemize}
 \item[{\rm (a)}] For $g_{\infty}\leq\hat c_{\rm LLW}$, we have the followings:
 \begin{itemize}
  \item[{\rm (a1)}] If $\lambda_u\geq (\sqrt{a}+\sqrt{1-a})$, then
  \begin{equation*}
c_2 = \begin{cases}
f(\sigma_1), & \text{ for }\max\{2\sqrt{dr},\sigma_2\}\leq \sigma_1\leq f^{-1}(\hat c_{\rm LLW}),\\
\hat c_{\rm LLW},& \text{ for }\sigma_1> f^{-1}(\hat c_{\rm LLW}),
\end{cases}
\end{equation*}
  independent $\lambda_u$;
  \item[{\rm (a2)}] If $\sqrt{dr}\leq \lambda_u< \sqrt{a}+\sqrt{1-a}$ and $g^{-1}(\hat c_{\rm LLW} )>2\lambda_u$, then
    \begin{equation*}
c_2 = \begin{cases}
f(\sigma_1), & \text{ for }\max\{2\sqrt{dr},\sigma_2\}\leq\sigma_1< 2\lambda_u,\\
g(\sigma_1), & \text{ for }2\lambda_u\leq \sigma_1<g^{-1}(\hat c_{\rm LLW}),\\
\hat c_{\rm LLW},& \text{ for }\sigma_1\geq g^{-1}(\hat c_{\rm LLW});
\end{cases}
\end{equation*}
 \item[{\rm (a3)}]If 
 $\lambda_u<\sqrt{dr}$, then
    \begin{equation*}
c_2 = \begin{cases}
g(\sigma_1), & \text{ for }\max\{2\sqrt{dr},\sigma_2\}\leq \sigma_1<g^{-1}(\hat c_{\rm LLW}),\\
\hat c_{\rm LLW},& \text{ for }\sigma_1\geq g^{-1}(\hat c_{\rm LLW});
\end{cases}
\end{equation*}
\end{itemize}
\item[{\rm (b)}] For $g_{\infty}>\hat c_{\rm LLW}$, we have the followings:
 \begin{itemize}
  \item[{\rm (b1)}] If $\lambda_u\geq (\sqrt{a}+\sqrt{1-a})$, then
  \begin{equation*}
c_2 = \begin{cases}
f(\sigma_1), & \text{ for }\max\{2\sqrt{dr},\sigma_2\}\leq \sigma_1\leq f^{-1}(\hat c_{\rm LLW}),\\
\hat c_{\rm LLW},& \text{ for }\sigma_1> f^{-1}(\hat c_{\rm LLW}),
\end{cases}
\end{equation*}
  independent $\lambda_u$;
  \item[{\rm (b2)}] If $\sqrt{dr}\leq \lambda_u< \sqrt{a}+\sqrt{1-a}$, then
    \begin{equation*}
c_2 = \begin{cases}
f(\sigma_1), & \text{ for }\max\{2\sqrt{dr},\sigma_2\}\leq\sigma_1< 2\lambda_u,\\
g(\sigma_1), & \text{ for }\sigma_1\geq2\lambda_u;
\end{cases}
\end{equation*}
 \item[{\rm (b3)}]If  $\lambda_u<\sqrt{dr}$, then
    \begin{equation*}
c_2 = g(\sigma_1) \quad \text{ for }\sigma_1\geq\max\{2\sqrt{dr},\sigma_2\}.
\end{equation*}
\end{itemize}
\end{itemize}

For the case {\rm (a)} $g_{\infty}\leq\hat c_{\rm LLW}$, the relationship between the spreading speeds $\sigma_1$  and $c_2$ given by {\rm (a1)}-{\rm (a3)} is illustrated in Figure \ref{figure2}. 
 Therein we may obtain the exact spreading speeds of \eqref{eq:1-1}, which are determined entirely by $\lambda_u,\,\lambda_v^+\in(0,\infty)$. Traversing all of $\lambda_u$,  the set of admissible speeds $\sigma_1$ and $c_2$ agrees with  \cite[Figure 1.1]{Girardin_2018}. Particularly, a direct consequence of Theorem \ref{thm:1-3'} is the following proposition, which improves upon \cite[Theorem 1.3]{Girardin_2018} by clarifying the role of exponential decay $(\lambda_u,\,\lambda_v^+)$ of the initial data.

\begin{proposition}\label{overlinec}
Let $(\overline{c}, \underline{c}) \in(2\sqrt{dr},\infty) \times (\hat{c}_{\rm LLW}, \infty)$ such that $\overline{c} > \underline{c}$.
\begin{itemize}
\item[\rm (a)] If $\underline{c}<f(\overline{c})$, then the pair of spreading speeds $(\overline{c}, \underline{c})$ is not realized by solutions of \eqref{eq:1-1} with initial data satisfying $\mathrm{(H_\lambda)}$.
\item[\rm (b)] If $\underline{c} = f(\overline{c})$, then there exists a unique $\lambda_v^+ = \frac{1}{2d}(\overline{c} - \sqrt{\overline{c}^2 - 4dr})$ such that for $\lambda_u \in [ \overline{c}/2, \infty)$, the pair of spreading speeds $(\overline{c}, \underline{c})$ can be realized by solutions of \eqref{eq:1-1} with initial data satisfying $\mathrm{(H_\lambda)}$.

\item[\rm (c)] If $\underline{c} >f(\overline{c})$, then there exists a unique pair $(\lambda^+_v, \lambda_u)$ such that the pair of spreading speeds $(\overline{c}, \underline{c})$ can be realized by solutions of \eqref{eq:1-1} with initial data satisfying $\mathrm{(H_\lambda)}$.
\end{itemize}
%
\end{proposition}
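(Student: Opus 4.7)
The strategy is to invoke Theorem \ref{thm:1-3'} and invert the map $(\lambda_v^+, \lambda_u) \mapsto (c_1, c_2) = (\overline{c}, \underline{c})$. The problem decouples nicely because $c_1 = \sigma_1$ depends only on $\lambda_v^+$, while $c_2 = \max\{\hat{c}_{\rm LLW}, \hat{c}_{\rm nlp}(\lambda_u, \overline{c})\}$ depends on $\lambda_u$ once $\overline{c}$ is fixed. The hypothesis $\underline{c} > \hat{c}_{\rm LLW}$ forces $c_2 = \hat{c}_{\rm nlp}$, so the task reduces to solving $\sigma_1(\lambda_v^+) = \overline{c}$ and $\hat{c}_{\rm nlp}(\lambda_u, \overline{c}) = \underline{c}$ independently. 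For the first equation, note from \eqref{eq:sigma} that the scalar map $\varphi(\lambda) := d\lambda + r/\lambda$ is strictly decreasing from $+\infty$ to $2\sqrt{dr}$ on $(0, \sqrt{r/d}]$, while $\sigma_1 \equiv 2\sqrt{dr}$ on $[\sqrt{r/d}, \infty)$. Hence $\overline{c} > 2\sqrt{dr}$ forces $\lambda_v^+ \in (0, \sqrt{r/d})$, and solving the resulting quadratic yields the unique solution $\lambda_v^+ = \frac{1}{2d}(\overline{c} - \sqrt{\overline{c}^2 - 4dr})$ claimed in all three cases (a)--(c).

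The heart of the argument is analysing $\Phi(\lambda_u) := \hat{c}_{\rm nlp}(\lambda_u, \overline{c})$. The plan is to show that $\Phi$ is continuous on $(\lambda_*, \infty)$, constantly equal to $f(\overline{c})$ on $[\overline{c}/2, \infty)$, and strictly decreasing on $(\lambda_*, \overline{c}/2]$ with image $[f(\overline{c}), +\infty)$, where $\lambda_* := \frac{1}{2}(\overline{c} - \sqrt{\overline{c}^2 - 4a})$ is the smaller positive root of $\lambda^2 - \overline{c}\lambda + a = 0$. For $\lambda_u \geq \overline{c}/2$ the inequality $\sigma_1 \leq 2\lambda_u$ triggers the first branch of \eqref{eq:hcacc'}, giving $\Phi = f(\overline{c})$. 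For $\lambda_u \in (\lambda_*, \overline{c}/2]$, the conjugate identity
$$
\tilde{\lambda}_{\rm nlp} = \frac{2(\overline{c}\lambda_u - \lambda_u^2 - a)}{\overline{c}+\sqrt{(\overline{c}-2\lambda_u)^2 + 4a}}
$$
shows $\tilde{\lambda}_{\rm nlp}$ is strictly increasing in $\lambda_u$, with $\tilde{\lambda}_{\rm nlp} \to 0^+$ as $\lambda_u \searrow \lambda_*$ and $\tilde{\lambda}_{\rm nlp} = \overline{c}/2 - \sqrt{a} \leq \sqrt{1-a}$ at $\lambda_u = \overline{c}/2$ (implicitly using the regime $\overline{c} \leq 2(\sqrt{a}+\sqrt{1-a})$ relevant to the decreasing branch of $f$). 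Thus the second branch of \eqref{eq:hcacc'} applies throughout, and $\Phi(\lambda_u) = H(\tilde{\lambda}_{\rm nlp})$ with $H(s) := s + (1-a)/s$ strictly decreasing on $(0, \sqrt{1-a}]$; this yields both strict monotonicity of $\Phi$ and the blow-up $\Phi \to +\infty$ as $\lambda_u \searrow \lambda_*$. Continuity across the gluing point $\lambda_u = \overline{c}/2$ is immediate since $H(\overline{c}/2 - \sqrt{a}) = f(\overline{c})$.

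Assembling the two steps yields the proposition: in case (a), $\Phi(\lambda_u) \geq f(\overline{c}) > \underline{c}$ for every $\lambda_u > 0$, so no pair $(\lambda_v^+, \lambda_u)$ realizes $(\overline{c}, \underline{c})$; in case (b), every $\lambda_u \geq \overline{c}/2$ gives $\Phi = f(\overline{c}) = \underline{c}$, with $\lambda_v^+$ uniquely determined from Step~1; in case (c), the bijectivity of $\Phi$ on $(\lambda_*, \overline{c}/2]$ furnishes a unique $\lambda_u$ with $\Phi(\lambda_u) = \underline{c}$. The main technical obstacle is the careful piecewise analysis of $\hat{c}_{\rm nlp}$: one must verify continuity at the gluing point $\lambda_u = \overline{c}/2$ between the first and second branches of \eqref{eq:hcacc'}, confirm that $\tilde{\lambda}_{\rm nlp}$ stays in the admissible range $(0, \sqrt{1-a}]$ throughout $(\lambda_*, \overline{c}/2]$, and establish the blow-up $\Phi \to +\infty$ at $\lambda_u = \lambda_*^+$ which secures surjectivity onto $[f(\overline{c}), +\infty)$.
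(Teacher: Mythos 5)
Your overall strategy (invert $\sigma_1(\lambda_v^+)=\overline{c}$ to fix $\lambda_v^+$, then invert $\lambda_u\mapsto\hat{c}_{\rm nlp}(\lambda_u,\overline{c})$) is the same one the paper uses for parts (b) and (c), and the $\lambda_v^+$-step is identical. However, your piecewise characterization of $\Phi(\lambda_u):=\hat{c}_{\rm nlp}(\lambda_u,\overline{c})$ — constant $=f(\overline{c})$ on $[\overline{c}/2,\infty)$, strictly decreasing with image $[f(\overline{c}),+\infty)$ on $(\lambda_*,\overline{c}/2]$ — is valid only when $\overline{c}\leq 2(\sqrt{a}+\sqrt{1-a})$, as you acknowledge parenthetically. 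When $\overline{c}>2(\sqrt{a}+\sqrt{1-a})$, the bound $\tilde{\lambda}_{\rm nlp}\leq\sqrt{1-a}$ fails already at $\lambda_u=\overline{c}/2$ (since $\tilde{\lambda}_{\rm nlp}(\overline{c}/2)=\overline{c}/2-\sqrt{a}>\sqrt{1-a}$), so the second branch of \eqref{eq:hcacc'} ceases to apply at some $\lambda_u^{**}<\overline{c}/2$, the third branch takes over, and $\Phi\equiv 2\sqrt{1-a}<f(\overline{c})$ on $[\lambda_u^{**},\infty)$; the strictly decreasing portion then maps onto $[2\sqrt{1-a},+\infty)$ rather than $[f(\overline{c}),+\infty)$. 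The paper handles exactly this by splitting part (c) into subcases (i) $\overline{c}\leq 2(\sqrt{1-a}+\sqrt{a})$ and (ii) $\overline{c}>2(\sqrt{1-a}+\sqrt{a})$, and in subcase (ii) restricts $\lambda_u$ to $\big(\tfrac{\overline{c}-\sqrt{\overline{c}^2-4a}}{2},\ \tfrac{\overline{c}-\sqrt{(\overline{c}-2\sqrt{1-a})^2-4a}}{2}\big)$ to keep $\tilde{\lambda}_{\rm nlp}\in(0,\sqrt{1-a})$, deducing $g(\overline{c})\in(2\sqrt{1-a},\overline{c})\supset(f(\overline{c}),\overline{c})\ni\underline{c}$ with strict monotonicity. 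You state in your closing paragraph that one ``must \ldots confirm that $\tilde{\lambda}_{\rm nlp}$ stays in the admissible range $(0,\sqrt{1-a}]$ throughout $(\lambda_*,\overline{c}/2]$,'' but this is precisely what fails in subcase (ii); the verification is missing, not merely deferred.

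For part (a) your route is also genuinely different from the paper's: you argue $\Phi(\lambda_u)\geq f(\overline{c})>\underline{c}$ for all $\lambda_u$, whereas the paper simply cites Theorem 1.2 of Girardin--Lam. Your argument for (a) inherits the same defect: when $\overline{c}>2(\sqrt{a}+\sqrt{1-a})$, one has $f(\overline{c})>2\sqrt{1-a}$ and $\Phi$ attains every value in $[2\sqrt{1-a},f(\overline{c}))$ on the constant branch and the tail of the decreasing branch, so the inequality $\Phi\geq f(\overline{c})$ is false, and your elementary argument for non-realizability does not close. You should either cite the Girardin--Lam bound as the paper does, or add the missing subcase analysis.
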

\begin{proof}
Assertion (a) follows directly from \cite[Theorem 1.2]{Girardin_2018}. For assertion (b),  $\underline{c} > \hat{c}_{\rm LLW} \geq 2\sqrt{1-a}$, so that we have $\overline{c} \leq 2(\sqrt{a} + \sqrt{1-a})$. Hence it
 follows directly from \eqref{eq:hcacc'}. It remains to show (c).

First, we define $\lambda_v^+=\frac{\overline{c}-\sqrt{\overline{c}^2-4dr}}{2d}\in(0,\sqrt{\frac{r}{d}})$ such that $\overline{c}=\sigma_1=d\lambda_v^++\frac{r}{\lambda_v^+}$. Since $\sigma_1$ is strictly monotone in $(0,\sqrt{\frac{r}{d}})$, the choice of such $\lambda_v^+$ is unique.
Then we shall determine $\lambda_u$ such that $c_2=\underline{c}=\tilde{\lambda}_{\rm{nlp}}+\frac{1-a}{\tilde{\lambda}_{\rm{nlp}}}.$

  Since $\underline{c}>f(\overline{c})\geq 2\sqrt{1-a}$ and $\underline{c}>\hat{c}_{\rm{LLW}}$,  to satisfy $c_2=\underline{c}$, by \eqref{eq:hcacc'} we must have $\lambda_u\in(\frac{\overline{c}-\sqrt{\overline{c}^2-4a}}{2}, \frac{\overline{c}}{2}) $ and
\begin{equation}\label{condition}
\underline c=g(\overline{c}) \,\,\text{ and }\,\, \tilde\lambda_{\rm nlp} = \frac{1}{2}\left[\overline{c} - \sqrt{(\overline{c}-2\lambda_u)^2 + 4a}\right]<\sqrt{1-a}.
\end{equation}
Hence, it suffices to choose the unique $\lambda_u\in(\frac{\overline{c}-\sqrt{\overline{c}^2-4a}}{2}, \frac{\overline{c}}{2}) $ such that \eqref{condition} holds.
\begin{itemize}
  \item [{\rm(i)}] If $\overline{c}\leq2(\sqrt{1-a}+\sqrt{a})$, then observe that when $\lambda_u\in(\frac{\overline{c}-\sqrt{\overline{c}^2-4a}}{2}, \frac{\overline{c}}{2}) $,
$
\tilde\lambda_{\rm nlp} \in \left(0,\overline{c}/{2}-\sqrt{a}\right)
$
is increasing in $\lambda_u$,  so that
\begin{equation*}
    g(\overline{c})=\tilde\lambda_{\rm nlp}+\frac{1-a}{\tilde\lambda_{\rm nlp}}\in \left(f(\overline{c}),\overline c \right),
\end{equation*}
is decreasing in $\lambda_u$. Noting that $\underline{c}\in\left(f(\overline{c}),\overline c\right)$, we may select the unique $\lambda_u\in(\frac{\overline{c}-\sqrt{\overline{c}^2-4a}}{2}, \frac{\overline{c}}{2})$ such that \eqref{condition} holds;
  \item [{\rm(ii)}] If $\overline{c}>2(\sqrt{1-a}+\sqrt{a})$, then to satisfy $\tilde\lambda_{\rm nlp} <\sqrt{1-a}$ in \eqref{condition}, it is necessary that
  $\lambda_u\in(\frac{\overline{c}-\sqrt{\overline{c}^2-4a}}{2}, \frac{\overline{c}-\sqrt{(\overline{c}-2\sqrt{1-a})^2-4a}}{2})$.
  In this case,
  \begin{equation*}
   \tilde\lambda_{\rm nlp} \in \left(0,\sqrt{1-a}\right) \,\text{ and thus }\, g(\overline{c})=\tilde\lambda_{\rm nlp}+\frac{1-a}{\tilde\lambda_{\rm nlp}}\in \left(2\sqrt{1-a},\overline c\right),
\end{equation*}
are also strictly monotone in $\lambda_u$, so that there is the  unique $\lambda_u$ such that \eqref{condition} holds.
\end{itemize}
The proof is now complete.
\end{proof}

\begin{figure}[http!!]
  \centering
\includegraphics[height=5.2in]{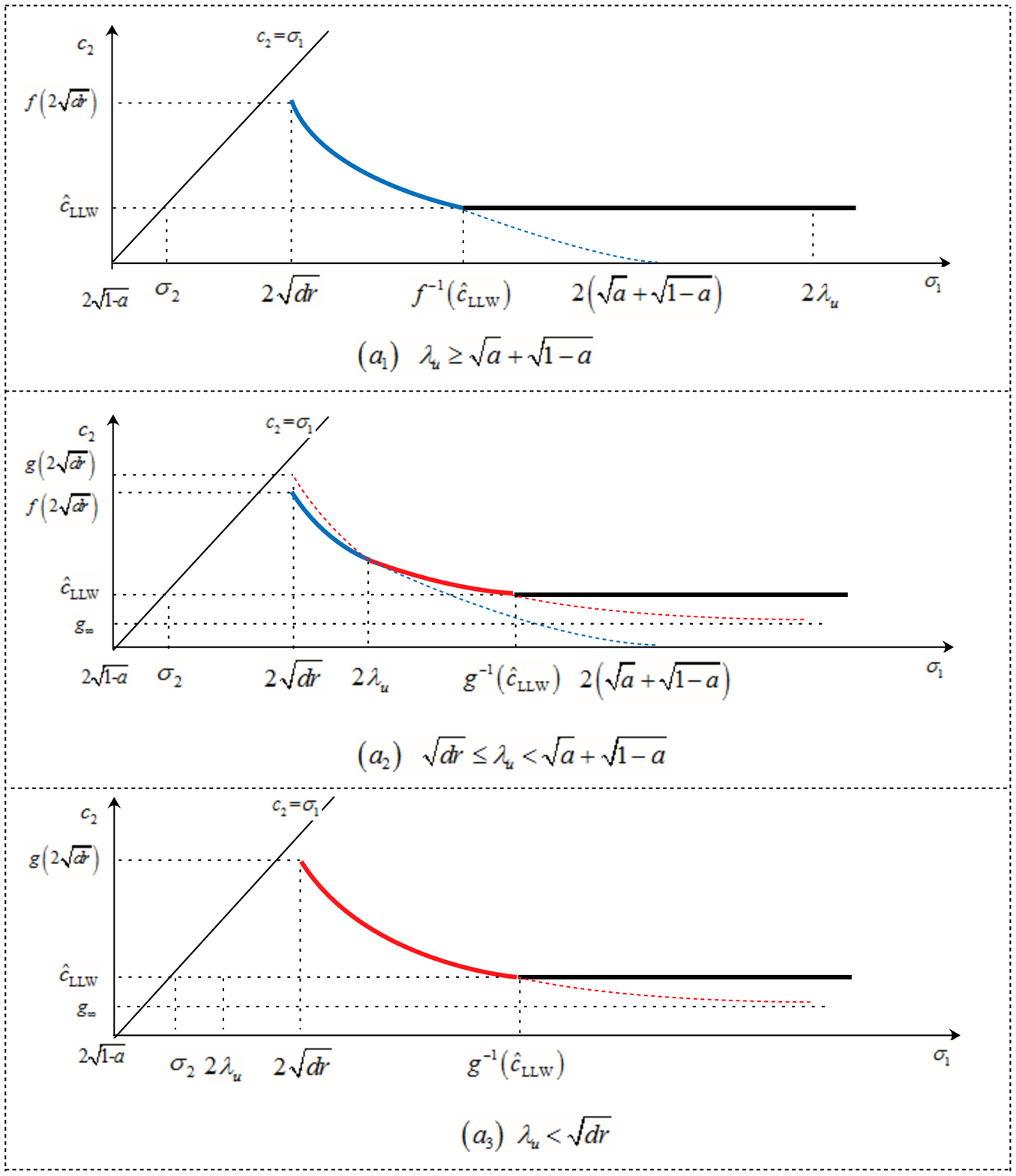}
  \caption{ The profile of $c_2(\sigma_1)$ for case {\rm (a)} $g_{\infty}\leq\hat c_{\rm LLW}$, which is expressed by the solid line with the blue one representing $f$ and the red one representing $g$.
                    }\label{figure2}
  \end{figure}

\section{An extension}\label{S6}
In this section, we consider the following competition system with forcing:
\begin{equation}\label{eq:extend}
\left\{
\begin{array}{ll}
\partial_t u-\partial_{xx}u=u(1-u-av-h(t,x)),& \text{ in }(0,\infty)\times \mathbb{R},\\
\partial _t v-d\partial_{xx}v=r v(1-bu-v-k(t,x)),& \text{ in }(0,\infty)\times \mathbb{R},\\
u(0,x)=u_0(x), & \text { on }  \mathbb{R},\\
v(0,x)=v_0(x), & \text { on }  \mathbb{R},
\end{array}
\right .
\end{equation}
where
 \begin{equation}\label{eq:hk}
\lim_{t\to\infty}\sup_{x\geq c_0 t}(|h(t,x)|+|k(t,x)|)=0 \quad \text{ for some }c_0 \in \mathbb{R}.
\end{equation}
We will make an observation in preparation for our forthcoming work on three-species competition systems.  Recall the definitions of $\sigma_i$ ($i=1,2,3$) from \eqref{eq:sigma}.

\begin{theorem}
Let $d,r,b>0$, $0<a<1$ and $\sigma_1>\sigma_2$.
Suppose that $h(x,t),k(x,t)$ are non-negative and satisfy \eqref{eq:hk}. Let $(u,v)$ be the solution of \eqref{eq:extend} with the initial data satisfying
 $\mathrm{(H_\lambda)}$. Assume
 $$c_0<\sigma_2'
\,\quad \text{ where } \sigma_2'=(\lambda_u\wedge \sqrt{1-a})+\frac{1-a}{\lambda_u\wedge \sqrt{1-a}}.$$
 Then,
 $$\underline{c}_1=\overline{c}_1=\sigma_1,~~\overline{c}_2\leq\max\{c_{\rm LLW}, \hat{c}_{{\rm{nlp}}}\},~~ \underline{c}_2\geq \hat{c}_{{\rm{nlp}}}, $$
 where $\underline{c}_i$, $\overline{c}_i$ ($i=1,2$) are defined in \eqref{eq:speeds}.
 Furthermore, for each small $\eta>0$,
\begin{equation}\label{eq:spreadingly'}
\begin{cases}
\lim\limits_{t\rightarrow \infty} \sup\limits_{ x>(\sigma_{1}+\eta) t} (|u(t,x)|+|v(t,x)|)=0, \\
\lim\limits_{t\rightarrow \infty} \sup\limits_{(\overline{c}_2+\eta) t< x<(\sigma_{1}-\eta) t} (|u(t,x)|+|v(t,x)-1|)=0, \\
\lim\limits_{t\rightarrow \infty} \inf\limits_{(c_0+\eta) t< x<(\underline{c}_2-\eta) t} u(t,x) >0,
 \end{cases}
\end{equation}
where $\sigma_1$, $\sigma_2$ are defined in \eqref{eq:sigma} and $c_{\rm LLW}$, $\hat{c}_{{\rm{nlp}}}$ are respectively 
given in Theorem \ref{thm:LLW} and \ref{thm:1-2}.
\end{theorem}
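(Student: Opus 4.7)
The plan is to adapt the strategy of Theorem \ref{thm:1-2} to the forced system \eqref{eq:extend}, leveraging two basic facts: since $h, k \geq 0$, KPP-type upper comparisons for $u, v$ remain valid; and since $\sup_{x \geq c_0 t}(|h|+|k|) \to 0$ as $t \to \infty$, the rescaled forcings $h^\epsilon, k^\epsilon$ vanish uniformly on compact subsets of $\{(t,x): x > c_0 t\}$ as $\epsilon \to 0$, so the Hamilton-Jacobi sub-/super-solution analysis of Sections \ref{S3}--\ref{S4} carries over to this half-plane.

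First, the KPP upper bounds $\overline{c}_1 \leq \sigma_1$ and $\overline{c}_2 \leq \sigma_2$ follow exactly as in Proposition \ref{prop:1}\rm{(i)}, since $u$ and $v$ are sub-solutions to scalar KPP equations regardless of $h, k \geq 0$. Next, I need a seeding step for $u$ to replace the LLW boundary input $(u,v)(t,0) \to (k_1, k_2)$, which is generally unavailable here since $h, k$ need not vanish near the origin. Using $v \leq 1$ (from KPP) and the inequality $\partial_t u - \partial_{xx} u \geq u(1 - a - h - u)$ together with the uniform decay of $h$ on $\{x \geq c_0 t\}$, I would apply KPP spreading for time-space heterogeneous reactions to conclude that $u$ is uniformly positive on the wedge $\{(c_0+\eta_0)t < x < (\sigma_2' - \eta_0)t\}$ for $t$ large and some small $\eta_0 > 0$; the hypothesis $c_0 < \sigma_2'$ is precisely what makes this wedge nonempty.

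With the seeding in hand, I would run the half-relaxed limit construction of $w_1^*, w_{2,*}, w_2^*$ exactly as in Section \ref{S3}, restricted to $\{x > c_0 t\}$, where the viscosity sub-/super-solution properties of Lemmas \ref{lem:sub-solutionw1}, \ref{lem:sub-solutionw2} and \ref{lem:supsolutionw2} are unchanged (the vanishing forcings contribute nothing at interior test points in the limit). Comparing with the explicit super-solutions $\overline{w}_1$ and $\overline{w}_2$ of Lemmas \ref{lem:underlinec1} and \ref{lem:underlinec2} via Theorem \ref{thm:D} then yields $\underline{c}_1 \geq \sigma_1$ and $\underline{c}_2 \geq \hat{c}_{\rm nlp}$, the latter being converted from $w_{2,*} = 0$ to positivity of $u$ by Lemma \ref{lem:underlinec}\rm{(a)}.

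For the upper bound $\overline{c}_2 \leq \max\{c_{\rm LLW}, \hat{c}_{\rm nlp}\}$, I construct $\underline{w}_2$ as in Proposition \ref{prop:overlinec2} to obtain $u(t, (\sigma_1 - \delta)t) \leq \exp(-(\hat{\mu}_\delta + o(1))t)$ and then apply Lemma \ref{lem:appen1}\rm{(a)} on the shifted strip $\{c_0' t < x < (\sigma_1 - \delta) t\}$ with $c_0' \in (c_0, \sigma_2')$, using the seeded positivity of $u$ along $x = c_0' t$ in place of the classical boundary data at $x = 0$. The first two items of \eqref{eq:spreadingly'} then follow from the values of $\overline{c}_1, \overline{c}_2$ and Lemma \ref{lem:entire1}\rm{(d)} (used to upgrade $v$ to $1$ in the intermediate zone), while the third is immediate from the seeded positivity together with $\underline{c}_2 \geq \hat{c}_{\rm nlp}$. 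The main obstacle is the seeding step, which requires KPP spreading for a reaction $1 - a - h(t,x)$ that converges to $1-a$ only uniformly on $\{x \geq c_0 t\}$; a secondary technical matter is extending Lemma \ref{lem:appen1}\rm{(a)} to an inner boundary $x = c_0' t$ with only a uniform positive lower bound on $u$ in place of convergence to $(k_1, k_2)$, which I expect to be a routine adjustment of the barrier construction.
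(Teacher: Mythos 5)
Your proposal follows essentially the same route as the paper's Section~\ref{S6} proof: KPP comparisons for $\overline{c}_1\leq\sigma_1$, $\overline{c}_2\leq\sigma_2$; a seeding step that replaces the origin-anchored data $(u,v)(t,0)\to(k_1,k_2)$ used in Proposition~\ref{prop:1}; the half-relaxed-limit and viscosity sub-/super-solution machinery restricted to the region $\{x>(\sigma_2'-\eta)t\}$ where $h^\epsilon\to 0$; and the Lemma~\ref{lem:appen1}(a) barrier argument adapted to a shifted inner boundary to bound $\overline{c}_2$. The seeding step you flag as the main obstacle is exactly where the paper does the real work, constructing an explicit sub-solution $\underline{u}(t,x)=\frac{\delta}{4}\max\{e^{-\tilde\lambda_1(x-c't)}-e^{-\tilde\lambda_2(x-c't)},0\}$ of $\partial_t u=\partial_{xx}u+u(1-a-h-u)$ on the wedge $\{t\geq T,\ x\geq c't\}$ with $c'\in(c_0,\sigma_2')$, after taking $T$ large so that $|h|\leq\delta$ there; and the paper is equally brief about your ``secondary technical matter'' of running Lemma~\ref{lem:appen1}(a) from the inner ray $x=c_0't$, saying only that the argument of Proposition~\ref{prop:overlinec2} carries over.
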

\begin{proof}
The proof can be mimicked after that of Theorem \ref{thm:1-2}. 

\noindent{\bf Step 1.} The estimates $\overline{c}_1\leq\sigma_1$ and $\overline c_2\leq\sigma_2$ can be proved by rather similar  arguments as in Proposition \ref{prop:1}, and the details are omitted here.

\noindent{\bf Step 2.} We show that  for each small  $\eta>0$,
 \begin{equation}\label{assump1}
   \liminf_{t\to \infty} u(t,(\sigma'_2-\eta)t)>0\,\,\text{ and }\,\, \liminf_{t\to \infty} v(t,(\sigma_1-\eta)t)>0.
 \end{equation}


Here, we just show the first one since the proof of the second one is analogous. For the case of $\lambda_u\geq \sqrt{1-a}$, by \eqref{eq:hk} and  $c_0<\sigma_2'$, the system \eqref{eq:extend} is approximately equal to \eqref{eq:1-1} in $\{(t,x):x\geq \frac{c_0+\sigma_2'}{2}t, t\geq T\}$ for sufficient large $T$, so that we can deduce \eqref{assump1} by applying the similar arguments in Steps 4 of the proof of  \cite[Proposition 2.1]{LLL2019}. It remains to consider the case of $\lambda_u<\sqrt{1-a}$.

Fix any $c'\in (\max\{\frac{c_0+\sigma_2'}{2},2\sqrt{1-a}\},\sigma_2')$. It is enough to show that there exist positive constants $\delta, \tilde\lambda_1, \tilde\lambda_2, T$ such that  $\tilde\lambda_1< \tilde\lambda_2$ and
\begin{equation}\label{eq:compareB}
u(t, x + c't) \geq  \frac{\delta}{4}\max\left\{ \left[e^{-\tilde{\lambda}_1x}-e^{-\tilde{\lambda}_2x}\right],0\right\} \quad \text{ for }t \geq T, x \geq 0.
\end{equation}
This implies $\underline{c}_2 \geq c'$ for each $c'\in (\max\{\frac{c_0+\sigma_2'}{2},2\sqrt{1-a}\},\sigma_2')$, i.e., $\underline{c}_2 \geq \sigma_2'$.

To this end,
choose $\delta_1>0$ small enough so that
\begin{equation}\label{eq:delta_small}
\tilde\lambda_1:= \frac{1}{2} \left[ c' - \sqrt{(c')^2 - 4(1-a - 2\delta_1)}\right] > \lambda_u. 
\end{equation}
This is possible since $c' < \sigma_2'$ and that $s \mapsto \frac{s - \sqrt{s^2 - 4(1-a)}}{2}$ is monotone, so that
$$
 \frac{1}{2} \left[ c' - \sqrt{(c')^2 - 4(1-a)}\right] >  \frac{1}{2} \left[\sigma_2' - \sqrt{(\sigma_2')^2 - 4(1-a)}\right] =\lambda_u.
$$
Next, choose $T>0$ large so that 
\begin{equation}\label{eq:chooseT}
|h(t,x)|\leq \delta\, \quad \text{ for } \,t\geq T,\,\, x\geq {c}'t,
\end{equation}
and then choose $\delta \in (0,\delta_1]$ so that
\begin{equation}\label{eq:chooseD}
 u(T,x) \geq \frac{\delta}{4} e^{-\tilde{\lambda}_1(x- c' T)}  \quad \text{ for } \, x\geq {c}'T,
\end{equation}
where \eqref{eq:chooseT} follows from \eqref{eq:hk} by noting  ${c}'>c_0$; and that \eqref{eq:chooseD} holds due to $u(T,x)\sim e^{-\lambda_u x} $ at $\infty$ and $\tilde \lambda_1>\lambda_u$ (see, e.g. \cite[Corollary 1 of Ch. 1]{Volpert_1994}). By the choice of $\tilde\lambda_1 < \tilde\lambda_2$, $\delta, \delta_1, T$, it follows that 
\begin{equation}\label{eq:underlineu}
 \underline{u}(t,x):=\max\left\{\frac{\delta}{4} \left[e^{-\tilde{\lambda}_1(x-c't)}-e^{-\tilde{\lambda}_2(x-c't)}\right],0\right\},
\end{equation}
is a sub-solution of the KPP-type equation
\begin{equation}\label{eq:vlkpp}
\partial_t u = \partial_{xx} u+ r u(1-a -h(x,t)- u)\,\quad \text{ in }\Omega,
\end{equation}
where $\Omega:=\{ (t,x): t \geq T,\,\,x \geq c't\}$.

For $\delta \in (0,\delta_1]$ to be specified later, define
\begin{equation}\label{eq:underlineu}
 \underline{u}(t,x):=\max\left\{\frac{\delta}{4} \left[e^{-\tilde{\lambda}_1(x-c't)}-e^{-\tilde{\lambda}_2(x-c't)}\right],0\right\},
\end{equation}
where $\tilde\lambda_1$ is given in \eqref{eq:delta_small} and $\tilde\lambda_2 = \frac{1}{2} \left[ c' + \sqrt{(c')^2 - 4(1-a - 2\delta)}\right]$. We will choose $T>0$ and $\delta \in (0,\delta_1]$ so that
\begin{equation*}
\left\{
\begin{array}{ll}
\partial_t \underline {u} - \partial_{xx} \underline {u}-\underline {u}(1-a-h(x,t)-\underline {u}) \leq -\underline{u} \left( 2\delta - h(x,t) - \underline{u}\right) \leq 0   & \hspace{-.2cm}\text{ in }\Omega,\\
u(t, c' t) \geq  0 = \underline{u}(t, c't) &\hspace{-.2cm}\text{ for }t \geq T,\\
u(T, x) \geq  \frac{\delta}{4} e^{-\tilde{\lambda}_1(x- c' T)} \geq \underline{u}(T, x) &\hspace{-.2cm}\text{ for }x \geq c'T,\\
 \end{array}\right.
\end{equation*}
i.e., $u$ and $\underline{u}$ is a pair of super- and sub-solutions of the KPP-type equation
\begin{equation}\label{eq:vlkpp}
\partial_t u = \partial_{xx} u+ r u(1-a -h(x,t)- u)\,\quad \text{ in }\Omega,
\end{equation}
where $\Omega:=\{ (t,x): t \geq T,\,\,x \geq c't\}$. Hence, by comparison, \eqref{eq:compareB} holds.

To proceed further, as in Section \ref{S3}, based on the scaling \eqref{scaling}, 
we introduce the WKB ansatz $w_2^\epsilon$, which is given by
\begin{equation*}
 w_2^\epsilon(t,x)=-\epsilon\log{u^\epsilon(t,x)},
\end{equation*}
 satisfying the equation:
 \begin{equation*}
   \begin{split}
\begin{cases}
\partial_tw_2^\epsilon-\epsilon\partial_{xx} w_2^\epsilon+| \partial_xw_2^\epsilon|^2+1-u^\epsilon-av^\epsilon-h^\epsilon=0, & \text{in } (0,\infty)\times\mathbb{R},\\
w_2^\epsilon(0,x)=-\epsilon\log{u^\epsilon(0,x)},  & \mathrm{on} ~\mathbb{R}.
\end{cases}
   \end{split}
 \end{equation*}
Here $h^\epsilon(t,x)=h(\frac{t}{\epsilon},\frac{x}{\epsilon})$. 
By Remark \ref{rmk:w1w20}, we also use the half-relaxed limit method and introduce $w_2^*$  and $w_{2,*}$. By \eqref{assump1},
$$\liminf_{\epsilon\to 0} u^\epsilon (t,(\sigma'_2-\eta)t)>0,$$
and $u^\epsilon$ is moreover bounded by $1$.
We have then, by definitions, that
\begin{equation}\label{wstar12}
w_2^*(t,(\sigma'_2-\eta)t)=w_{2,*}(t,(\sigma'_2-\eta)t)=0.
\end{equation}

\noindent{\bf Step 3.} We prove $\underline{c}_1\geq \sigma_1$. 

This follows from  \eqref{assump1} and definition of $\underline{c}_1$.

 \noindent{\bf Step 4.}  We prove $\underline{c}_2\geq \hat{c}_{{\rm{nlp}}}$.

By Step 1 and  $h\geq 0$, we have 
\begin{equation*}
 0\leq \limsup_{\substack{(t',x')\to (t,x)\\ \epsilon\to 0}} v^\epsilon(t',x')\leq \chi_{\{x \leq \sigma_1 t\}}.
\end{equation*}
In view of $\sigma_2'>c_0$, we choose $0<\eta\ll1$ such that $\sigma'_2-\eta>c_0$. We then use \eqref{eq:hk} to  derive that
\begin{equation*}
  \begin{array}{l}
     \lim\limits_{\epsilon \to 0}\sup\limits_{x\geq (\sigma'_2-\eta)t} h^{\epsilon}(t,x)= 0.
   \end{array}
\end{equation*}
Based on \eqref{wstar12}, similar to Lemma \ref{lem:sub-solutionw1}, we can deduce that 
 $w_2^*$  is a viscosity sub-solution of
\begin{align*}
\begin{cases}
\min\{\partial_t w+|\partial_xw|^2+1-a\chi_{\{x\leq \sigma_1 t\}} ,w\}=0,& \text{for}\,\, x> (\sigma_2' -\eta)t,\\
w(0,x)=\lambda_ux, &\text{for}\,\,x\geq 0,\\
w(t,(\sigma'_2-\eta)t)=0, &\text{for}\,\,t\geq 0.
\end{cases}
\end{align*}

We then apply the same arguments developed in Lemmas \ref{lem:underlinec2} by constructing the same super-solutions, to deduce that $\underline{c}_2\geq \hat{c}_{{\rm{nlp}}}$. 

 \noindent{\bf Step 5.} We show $\overline{c}_2\leq\max\{c_{\rm LLW}, \hat{c}_{{\rm{nlp}}}\}$ and \eqref{eq:spreadingly'}.

 By \eqref{eq:hk} again,  similar to Corollary \ref{cor:underlinec1}, we can get
$$ \liminf_{\substack{(t',x')\to (t,x)\\ \epsilon\to 0}} v^\epsilon(t',x') \geq \chi_{\{\sigma_2 t<x<\sigma_1t\}},$$
 so that we may use \eqref{wstar12} to deduce that $w_{2,*}$ is  a viscosity  super-solution of
 \begin{align*}
\begin{cases}
\min\{\partial_tw+|\partial_xw|^2+1-a\chi_{\{\sigma_2 t<x<\sigma_1t\}},w\}=0&\text{for}\,\, x> (\sigma_2'-\eta)t,\\
w(0,x)=\lambda_ux, & \text{for}\,\,x\geq 0,\\
w(t,(\sigma'_2-\eta)t)=0, &\text{for}\,\,t\geq 0,
\end{cases}
\end{align*}
as in Lemma \ref{lem:supsolutionw2}.
 Then we can get $\overline{c}_2\leq\max\{c_{\rm LLW}, \hat{c}_{{\rm{nlp}}}\}$ by the same arguments  developed in  Proposition \ref{prop:overlinec2}. We finally  deduce \eqref{eq:spreadingly'} by  similar arguments as in the proof of Theorem \ref{thm:1-2}, which completes the proof.
 \end{proof}

\section*{Acknowledgement}
QL (201706360310) and SL (201806360223) would like to thank the China Scholarship Council for financial support during the period of their overseas study and  express their gratitude to the Department of Mathematics, The Ohio State University for the warm hospitality. SL is partially supported by the Outstanding Innovative Talents Cultivation Funded Programs 2018 of Renmin University of China. KYL
is partially supported by NSF grant DMS-1853561.

\appendixtitleon
\appendixtitletocon
\begin{appendices}

\section{Comparison principle}\label{SD}

This section is devoted to the proof of a comparison lemma for Hamilton-Jacobi equation for discontinuous super and sub-solutions and for piecewise Lipschitz continuous Hamiltonian. Our proof is inspired by the arguments developed by Ishii \cite{Ishii_1997} and Tourin \cite{Tourin_1992} (see also \cite{Alvarez_1997, Chen_2008,Giga_2010}). Ishii used a crucial observation of \cite{Alvarez_1997} to prove the comparison principle for discontinuous super- and sub-soultion  of Hamilton-Jacobi equations with nonconvex but continuous Hamiltonian, whereas Tourin gave the uniqueness of continuous solution of Hamilton-Jacobi equations with piecewise Lipschitz continuous Hamiltonian.  The uniqueness of viscosity solution for nonlinear first-order partial differential equations was first introduced by Crandall and Lions in \cite{Crandall_1983}, then Crandall, Ishii and Lions \cite{Crandall_1987} gave a simpler proof. Ishii \cite{Ishii_1985} study the discontinuous Hamiltonian with time measure and Tourin and Ostrov\cite{Ostrov_2002} studied the piecewise Lipschitz continuous, convex Hamiltonian, based on the dynamic programming principle.

Let $\Omega$ be a smooth domain in $(0,T] \times \mathbb{R}^N$, which is allowed to be unbounded or even equal to $(0,T]\times\mathbb{R}^N$.  We assume without loss that $T = \sup\{t>0:~(t,x) \in \Omega\}$, and define the parabolic boundary of $\Omega$ as
$$
\partial_p \Omega = \{(t,x) \in \partial\Omega:~ t \in [0,T)\}.
$$
Consider the following  Hamilton-Jacobi equation:
\begin{equation}\label{eq:D1}
\min\{\partial_t w + H(t,x, \partial_x w),w - Lt \} = 0 \quad \text{ in }\Omega.
\end{equation}
Let $H^*$ and $H_*$ be, respectively, the upper semicontinuous (usc) and lower semicontinuous (lsc) envelope of $H$ with respect to its first two variables. Precisely,
$$H^*(t,x,p)=\limsup_{(t',x')\to (t,x)}H(t',x',p)\quad \text{ and } \quad H_*(t,x,p)=\liminf_{(t',x')\to (t,x)}H(t',x',p).$$

We say that a  lower semicontinuous (lsc) function $w$ is  a  viscosity super-solution of \eqref{eq:D1} if $w-Lt\geq 0$ in $ \Omega$, and for all test functions $\varphi\in C^\infty(\Omega)$, if $(t_0,x_0)\in \Omega$ is a strict local minimum point of $w-\varphi$, then
 $$\partial_t\varphi(t_0,x_0)+H^*(t_0,x_0,\partial_x\varphi(t_0,x_0))\geq 0$$
  holds; A upper semicontinuous (usc) function $w$ is a  viscosity sub-solution of \eqref{eq:D1} if for all test functions $\varphi\in C^\infty(\Omega)$,  if $(t_0,x_0)\in \Omega$  is a strict local maximum point of $w-\varphi$ such that $w(t_0,x_0) - Lt_0>0$, then
  $$\partial_t\varphi(t_0,x_0)+H_*(t_0,x_0,\partial_x\varphi(t_0,x_0))\leq 0$$
holds. Finally, $w$ is a  viscosity solution  of \eqref{eq:D1} if and only if $w$ is simultaneously a viscosity super-solution and a viscosity sub-solution of \eqref{eq:D1}.


We impose additional assumptions on the domain $\Omega$ and the Hamiltonian $H:\Omega\times\mathbb{R}^N\rightarrow \mathbb{R}$. Namely,
there exists a closed set $\Gamma \subset [0,T]\times \mathbb{R}^N$ and, for each $R>0$, a continuous function $\omega_R: [0,\infty) \to [0,\infty)$ such that $\omega_R(0) =0$ and $\omega_R(r)>0$ for $r>0$, such that the following holds:
\begin{description}
\item[\rm{(A1)}] $H\in C((\Omega \setminus \Gamma)\times \mathbb{R}^N)$;

\item[\rm{(A2)}] For each $(t_0,x_0) \in (\Omega \setminus \Gamma) \cap ((0,T) \times B_R(0))$, there exist a constant $\delta_0=\delta_0(R)>0$ such that
$$
H(t,y,p) - H(t,x,p) \leq \omega_R\left( |x-y|( 1 + |p|)\right)
$$
for $t,x,y,p$ such that $\|(t,x) - (t_0,x_0)\| +\|(t,y) - (t_0,x_0)\| < \delta_0$ and $p \in \mathbb{R}^N$; 

\item[\rm{(A3)}] For each $(t_0,x_0) \in \Omega \cap \Gamma \cap ((0,T) \times B_R(0))$,  there exist a constant $\delta_0=\delta_0(R)>0$ and  a unit vector $(h_0, k_0) \in \mathbb{R} \times \mathbb{R}^N$ such that
$$
H^*(s,y,p) - H_*(t,x,p)\leq \omega_R\left( (|t-s| + |x-y|)(1 + |p|) \right)
$$
for all $p \in \mathbb{R}^N$ and $s, t, y, x$ satisfying
$$
\left\{
\begin{array}{ll}
\|(t,x) - (t_0,x_0)\|+ \|(s,y) - (t_0,x_0)\| < \delta_0,  \\
{\small \left\| \frac{(t-s,x-y)}{\|(t-s, x-y)\|} - (h_0,k_0)\right\| < \delta_0};
\end{array}\right.
$$

\item[\rm{(A4)}] There exists some $M \geq 0$ such that for each $\lambda \in [0,1)$ and $x_0 \in \mathbb{R}^N$, there exists constants $\bar\epsilon(\lambda,x_0)>0$ and $\bar C(\lambda,x_0) >0$ such that for all $p \in \mathbb{R}^N$, $(t,x) \in \Omega$, if $\epsilon \in [0, \bar\epsilon(\lambda,x_0)]$, then
$$
H\left(t,x,\lambda p - \frac{\epsilon (x-x_0)}{|x-x_0|^2 + 1}\right) - M \leq \lambda\left( H(t,x,p) - M\right) + \epsilon \bar C(\lambda,x_0).
$$

\end{description}

\begin{theorem}\label{thm:D}
Suppose that $H$ satisfies the hypotheses $\rm{(A1)}$-$\rm{(A4)}$. Let $\overline{w}$ and $\underline{w}$ be a pair of super- and sub-solutions of \eqref{eq:D1}
such that $\overline{w} \geq \underline{w}$ on $\partial_p \Omega$, then
$$
\overline{w} \geq \underline{w} \quad \text{ in } \Omega.
$$
\end{theorem}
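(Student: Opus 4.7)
The plan is to argue by contradiction using a doubling-of-variables argument that combines Ishii's trick \cite{Ishii_1997} (for perturbing to a strict sub-solution via (A4)) with Tourin's directional doubling \cite{Tourin_1992} (for crossing the discontinuity set $\Gamma$). Suppose for contradiction that
$$M_0 := \sup_{(t,x) \in \Omega}(\underline{w}(t,x) - \overline{w}(t,x)) > 0.$$
At any point nearly attaining this positive supremum, the super-solution bound $\overline{w} \geq Lt$ forces $\underline{w} > Lt$, so the obstacle in \eqref{eq:D1} is inactive there and only the Hamilton-Jacobi inequality need be invoked for the sub-solution.

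The first step is to pass from $\underline{w}$ to a \emph{strict} sub-solution. Fix $x_0 \in \mathbb{R}^N$. For $\lambda \in (0,1)$ close to $1$ and $\epsilon > 0$ small, set
$$\underline{w}^{\lambda,\epsilon}(t,x) := \lambda \underline{w}(t,x) + (1-\lambda)(L+M)t - \gamma t - \tfrac{\epsilon}{2}\log(1+|x-x_0|^2),$$
and verify, using hypothesis (A4) with $p = \partial_x \underline{w}$ at an arbitrary smooth test point, that the Hamilton-Jacobi inequality for $\underline{w}^{\lambda,\epsilon}$ carries a slack of the form $-\gamma + (1-\lambda)(\text{const}) + \epsilon\, \bar{C}(\lambda, x_0)$. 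Choosing $\gamma > 0$ dominating the error terms gives a strict sub-solution with negative slack $-\gamma/2$. For $\lambda$ close to $1$, $\epsilon$ small, and $x_0$ chosen near a near-maximizer of $\underline{w} - \overline{w}$, the strict sub-solution still satisfies $\sup(\underline{w}^{\lambda,\epsilon} - \overline{w}) > M_0/2 > 0$.

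Next I perform doubling of variables. Consider
$$\Phi_{\delta,\alpha,\beta}(t,x,s,y) := \underline{w}^{\lambda,\epsilon}(t,x) - \overline{w}(s,y) - \frac{|(t,x) - (s,y) - \beta(h_0,k_0)|^2}{2\delta} - \alpha(|x|^2 + |y|^2),$$
on $\overline{\Omega} \times \overline{\Omega}$, where $(h_0, k_0)$ is the direction supplied by (A3) at a prospective limiting maximum point lying on $\Gamma$; away from $\Gamma$ one takes $\beta = 0$, reducing to the standard symmetric penalty. The $\alpha$-penalty guarantees an attained maximum $(\hat{t}, \hat{x}, \hat{s}, \hat{y})$; classical estimates force $|\hat{t}-\hat{s}| + |\hat{x}-\hat{y}| \to 0$ as $\delta \to 0$, with $\Phi_{\delta,\alpha,\beta}$ remaining bounded below by a fixed positive constant. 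The boundary inequality $\underline{w}^{\lambda,\epsilon} \leq \overline{w}$ on $\partial_p\Omega$ (inherited from $\underline{w} \leq \overline{w}$ after absorbing the $\gamma t$ and logarithmic perturbations into the constants) rules out boundary maximizers.

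The viscosity test then produces a common gradient $\hat{p} = (\hat{x}-\hat{y})/\delta + O(\alpha+\beta)$ and yields
$$\tfrac{\gamma}{2} \leq H_*(\hat{s}, \hat{y}, \hat{p}) - H^*(\hat{t}, \hat{x}, \hat{p}) + O(\alpha).$$
When the limit point lies in $\Omega \setminus \Gamma$, hypothesis (A2) bounds the right-hand side by $\omega_R(|\hat{x}-\hat{y}|(1+|\hat{p}|))$, and the standard estimate $|\hat{x}-\hat{y}|(1+|\hat{p}|) \leq C|\hat{x}-\hat{y}|^2/\delta \to 0$ yields a contradiction. When the limit point lies on $\Gamma$, the $\beta$-biased penalty forces the unit vector of $(\hat{t}-\hat{s}, \hat{x}-\hat{y})$ to stay within $\delta_0$ of $(h_0, k_0)$, so that (A3) supplies the same vanishing modulus $\omega_R((|\hat{t}-\hat{s}|+|\hat{x}-\hat{y}|)(1+|\hat{p}|)) \to 0$. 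Either way, sending $\delta, \alpha, \beta, \epsilon \to 0$ and $\lambda \to 1$ contradicts $\gamma > 0$, whence $M_0 \leq 0$. The main obstacle is the $\Gamma$-case: symmetric doubling can pin the maximizer on $\Gamma$ where standard tools fail because $H^* \neq H_*$, and it is precisely Tourin's directional penalty that steers $(\hat{t}-\hat{s}, \hat{x}-\hat{y})$ into the favorable direction for which (A3) provides control across the discontinuity.
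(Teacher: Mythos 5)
Your approach is essentially the same as the paper's: assume $\sup(\underline{w}-\overline{w})>0$, perturb $\underline{w}$ into a strict sub-solution via hypothesis {\rm(A4)} and a logarithmic localizer, do doubling-of-variables, and when the limiting max falls on $\Gamma$ use a Tourin-type directional penalty to invoke {\rm(A3)}. The one structural difference is the directional mechanism: you shift the \emph{center of the quadratic penalty} by $\beta(h_0,k_0)$, whereas the paper shifts the \emph{argument of $\overline{w}$} by $\alpha^{-1/2}(h_0,k_0)$ in the second-pass doubling (their $\tilde\Psi_{\alpha,\beta}$). These two devices are standard and equivalent in spirit, but the paper's bookkeeping is easier precisely because the shift magnitude $\alpha^{-1/2}$ is tied to the penalty weight $\alpha$ by the relation ${\rm shift}^2\cdot{\rm penalty}=1$; this guarantees that the modulus argument $({\rm offset})(1+|\hat{p}|)$ in {\rm(A3)} vanishes in the limit. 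In your version you carry two free parameters $\beta$ (shift) and $\delta$ (penalty strength) and simply send both to zero; that does not automatically close, because you need simultaneously $\beta\gg\delta^{1/2}$ so the pair $(\hat{t}-\hat{s},\hat{x}-\hat{y})$ still points in the $(h_0,k_0)$-direction, and $\beta(1+|\hat{p}|)\to 0$ with $|\hat{p}|=O(\delta^{-1/2})$; the only consistent choice is $\beta\asymp\delta^{1/2}$, which you should state. Also the inequality you derive should read $\tfrac{\gamma}{2}\le H^*(\hat{s},\hat{y},\hat{p})-H_*(\hat{t},\hat{x},\hat{p})+O(\alpha)$ (the super-solution is tested against $H^*$, the sub-solution against $H_*$), the reverse of what you wrote. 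A last small point: you localize with $\alpha(|x|^2+|y|^2)$; this is fine once $\underline{w}$ has been truncated to be bounded above (the paper's Step 2), which you should say explicitly — otherwise the max may fail to exist. With those corrections your sketch would be a faithful variant of the paper's proof.
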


\begin{remark}
Let
$$
H(t,x,p) = \textbf{H}(p) + R(x/t),
$$
where $\textbf{H}$ is convex and coercive in $p$, and $s\mapsto R(s)$ has bounded variation and satisfies $|R(s)| \leq M$ for some $M\geq 0$. Then it is easy to verify that the hypotheses {\rm(A1)-(A4)} hold. In particular, it applies for all our purposes in this paper.

Our condition {\rm(A3)} is a quantitative version of the ``local monotonicity condition'' that was introduced in \cite{Chen_2008}. See \cite{Ishii_1997,Chen_2008} for more examples of Hamiltonians verifying the hypotheses {\rm(A1)}-{\rm(A4)}.
\end{remark}

\begin{proof}
Assume to the contrary that
\begin{align}\label{eq:sigma'}
\sup\limits_\Omega(\underline w-\overline w)>0.
\end{align}

\noindent{\bf Step 1.}  We may assume, without loss of generality, that $M=0$ in the hypothesis {\rm(A4)}. Indeed, if we make the change of variables $\underline{w}'(t,x) = \underline{w}(t,x) + Mt$ and $\overline{w}'(t,x) = \overline{w}(t,x)+Mt$, then $\underline{w}', \overline{w}'$ are, respectively, a sub-solution and a super-solution of \eqref{eq:D1} with $L$ replaced by ${L}'=L+M$, and $H(t,x,p)$ replaced by $H'(t,x,p)= H(t,x,p) - M$. This function $H'$ satisfies the hypotheses {\rm(A1)}-{\rm(A4)} with $M=0$. Henceforth in the proof we assume that the hypothesis {\rm(A4)} holds with $M=0$.

\noindent{\bf Step 2.}  It suffices to show that $\underline{w} \leq \overline{w}$ under the additional assumption that $\underline{w} \leq K$ for some $K>0$.

Indeed, if $\underline{w}$ is unbounded in $\Omega$, then fix a constant $K>0$ and take a sequence $\{g_j\}$ of smooth functions satisfying $ g_j(r) \nearrow \min\{r,K\}$ and
$$
0 \leq g_j'(r) \leq 1, \quad g_j'(r)r \leq r, \quad g_j(r) \leq \min\{r,K\}\,\quad \text{ for all }r \in \mathbb{R}.
$$
Then $\hat{w}:= g_j(\underline w)$ is a viscosity sub-solution of \eqref{eq:D1}, since in the region $\{(t,x):~\hat{w} - Lt >0\} \subset\{(t,x):~\underline{w} - Lt >0\}$, we may use the hypothesis {\rm(A4)} to yield
\begin{equation*}
\begin{split}
\quad\partial_t \hat w+H^*(t,x, D\hat w)&=g_k'(\underline w)\partial_t \underline w+H^*(t,x, g_k'(\underline w)D\underline w)\\
&\leq g_k'(\underline w) \left[\partial_t \underline w+H^*(t,x,D\underline w)\right]\leq 0. 
\end{split}
\end{equation*}
By the stability of property of viscosity super and sub-solutions \cite[Theorem 6.2]{BarlesLect}, we may let $j \to \infty$ to conclude that $\min\{\underline w,K\}$ is a viscosity sub-solution of \eqref{eq:D1} for each $K>0$.
It now remains to prove Theorem \ref{thm:D} for all bounded above viscosity sub-solutions,  since then
$$\min\{\underline w, K\} \leq \overline w \quad \text{ for all } K>0\quad \Rightarrow\quad \underline{w} \leq \overline{w}.$$

For $\lambda,\delta\in(0,1)$, denote
$$W(t,x)=\lambda^2\underline w(t,x)-\overline w(t,x)-\delta(\psi(x)+Ct+\frac{1}{T-t})-\lambda\delta Ct,$$
where $\psi(x) = \frac{1}{2} \log (|x|^2 + 1)$ and $C = \bar C(\lambda,0)$ as in the hypothesis {\rm (A4)}.

 \noindent{\bf Step 3.}  We choose $\lambda \nearrow 1$, $\delta \in (0, \bar \epsilon(\lambda,0)]$, $R>0$ and $(t_0,x_0) \in \Omega_R:= \Omega \cap [(0,T) \times B_R(0)]$
   such that
   \begin{equation}\label{eq:sigma11}
W(t_0,x_0)=\max\limits_{\Omega_R}{W(t,x)}=\max\limits_{\Omega}{W(t,x)}>0.
   \end{equation}

From \eqref{eq:sigma'} and Step 3, we may fix $\lambda\nearrow 1$ and $\delta\searrow0 $ such that
\begin{equation*}
\sup_{\Omega}W(t,x)>0,\quad\text{ and }\quad
W(t,x)\leq-\frac{\delta}{T-t} \quad \text{ on }{\partial_p\Omega}.
\end{equation*}
Since  $\psi(R)\to \infty$ as $R\to\infty$ and $\underline w - \overline{w} \leq K$, we arrive at
$$\sup_{(t,x) \in \Omega:~|x|=R}W(t,x)\to -\infty \quad \text{ as } R\to \infty,$$
whence we may fix $R\gg1$ so that $\max\limits_{\Omega_R}{W(t,x)}=\max\limits_{\Omega}{W(t,x)}>0$ holds. It remains to observe that the maximum $(t_0,x_0)$ in $\overline{\Omega_R}$ is attained in the interior, since $W(t,x) <0$ when $t = T$ or when $(t,x) \in \partial_p \Omega$.

\noindent {\bf Step 4.} With $x_0$ as being given in Step 3, fix $\epsilon>0$ small enough so that
\begin{equation}\label{eq:ssss}
\epsilon  \bar C(\lambda, x_0) \leq \bar C(\lambda,0) \quad \text{ and }\quad \delta\epsilon \leq \bar \epsilon(\lambda,x_0),
\end{equation}
and define
\begin{equation}\label{eq:Wprime}
\tilde{W}(t,x) := W(t,x) - \delta\lambda\epsilon \psi(x-x_0) - \frac{1}{2}|t-t_0|^2,
\end{equation}
where $\psi(x)=\frac{1}{2} \log{(|x|^2+1)}$ and $C = \bar C(\lambda,0)$ is as before. Then, $(t_0,x_0)$ is a strict global maximum of $\tilde{W}(t,x)$. Define also
\begin{align*}
\begin{split}
\Psi_{\alpha,\beta}(t,x,s,y)
=& \lambda^2 \underline w(t,x) -\overline w(s,y)-\delta(\psi (x)+Ct+\frac{1}{T-t})-\lambda\delta(\epsilon \psi (x-x_0)+Ct)\\
&-\frac{\alpha}{2}|x-y|^2- \frac{\beta}{2}|t-s|^2-\frac{1}{2}|t-t_0|^2.
\end{split}
\end{align*}

\noindent {\bf Step 5.} We claim that there exists $\underline\alpha>0$ such that if  $\min\{\alpha,\beta\} \geq \underline\alpha$, then
\begin{itemize}
\item[{\rm (i)}] $\Psi_{\alpha,\beta}$ has a local maximum point $(t_1,x_1,s_1,y_1)$ in $\Omega_R\times \Omega_R$;
\item[{\rm (ii)}] $\Psi_{\alpha,\beta}(t_1,x_1,s_1,y_1) \geq  \tilde W(t_0,x_0) = W(t_0,x_0) >0$;
\item[{\rm (iii)}] $\beta|t_1-s_1|^2 + \alpha |x_1-y_1|^2\to 0,  \text{ as } \min\{\alpha,\beta\} \to \infty$;
\item[{\rm (iv)}] $(t_1,x_1) \to (t_0,x_0)$ and $(s_1,y_1) \to (t_0,x_0)  \text{ as } \min\{\alpha,\beta\} \to \infty$,
\end{itemize}
where $\Omega_R=\Omega\cap[(0,T)\times B_R(0)]$. Since $\overline{w} \geq 0$ and $\underline{w} \leq K$ by Step 2, we see that $\sup_{\Omega_R \times \Omega_R} \Psi_{\alpha,\beta} \leq K$ independently of $\alpha$ and $\beta$, and has a maximum point $(t_1,x_1,s_1,y_1) \in \overline{\Omega}_R \times \overline{\Omega}_R$. Now, by  \eqref{eq:sigma11},
$$
\Psi_{\alpha,\beta}(t_1,x_1,s_1,y_1) \geq \max_{\Omega_R}\Psi_{\alpha,\beta}(t,x,t,x) = \tilde{W}(t_0,x_0) = W(t_0,x_0).
$$
This proves assertion (ii).

Furthermore, the boundedness also yields $\beta|t_1-s_1|^2 + \alpha|x_1 - y_1|^2 = O(1)$. We claim that $(t_1,x_1) \to (t_0,x_0)$ and $(s_1,y_1) \to (t_0,x_0)$. Indeed,  we may pass to a subsequence to get $(\hat t, \hat x)$ such that
$(t_1,x_1) \to (\hat t, \hat x)$ and $(s_1,y_1) \to (\hat t, \hat x)$ as $\min\{\alpha,\beta\} \to \infty$. Now, by (ii) we can write
\begin{align*}
 \frac{\alpha}{2}|x_1-y_1|^2 + \frac{\beta}{2}|t_1-s_1|^2 \leq - \tilde{W}(t_0,x_0) +(\tilde{W}(t_1,x_1)  + \overline{w}(t_1,x_1)) - \overline{w}(s_1,y_1) .
\end{align*}
Letting $\min\{\alpha,\beta\} \to \infty$, then $(t_1,x_1,s_1,y_1) \to (\hat t, \hat x, \hat t, \hat x)$. Using the fact that $\tilde{W}(t,x) + \overline{w}(t,x)$ (which is essentially $\lambda^2 \underline{w}(t,x)$ up to addition of continuous functions) and $-\overline{w}(s,y)$ are both upper semi-continuous in $\Omega$, we may take limsup as $\min\{\alpha,\beta\} \to \infty$ and deduce that
$$
0 \leq \limsup \left[\frac{\alpha}{2}|x_1-y_1|^2 + \frac{\beta}{2}|t_1-s_1|^2\right] \leq - \tilde{W}(t_0,x_0)  + \tilde{W}(\hat t, \hat x) \leq 0.
$$
Since $(t_0,x_0)$ is a strict maximum point of $\tilde{W}$, we must have $(\hat t, \hat x) = (t_0,x_0)$.  This proves assertions (iii) and (iv).

Finally, $(t_1,x_1, s_1,y_1) \to (t_0,x_0,t_0,x_0)$ and hence must be an interior point of $\Omega_R \times \Omega_R$ when $\min\{\alpha,\beta\}$ is sufficiently large. This proves (i).

\noindent {\bf Step 6. } We show the following inequality:
\begin{equation}\label{eq:maini}
\frac{\delta}{T^2} \leq H^*(s_1,y_1,\alpha (x_1-y_1))-H_*(t_1,x_1, \alpha (x_1-y_1))+|t_1-t_0|.
\end{equation}

Observe that $(t_1,x_1)$ is an interior maximum point of the function $\underline w(t,x)-\varphi(t,x)$, where
\begin{equation*}
\begin{split}
\varphi(t,x)=& \frac{1}{\lambda^2}[\overline w(s_1,y_1)+\delta(\psi(x)+Ct+\frac{1}{T-t})+\lambda\delta(\epsilon\psi(x-x_0)+Ct)\\
&+\frac{\alpha}{2}|x-y_1|^2+ \frac{\beta}{2}|t-s_1|^2 +\frac{1}{2}|t-t_0|^2  ].
\end{split}
\end{equation*}
Also $\underline w(t_1,x_1)>0$, which is a consequence of $\overline w(s_1,y_1) \geq 0$ and $\Psi_\alpha(t_1,x_1,s_1,y_1) >0$.
By definition of $\underline w$ being a viscosity sub-solution of \eqref{eq:D1}, we have
\begin{equation*}
\begin{split}
&\quad \frac{1}{\lambda^2}\left[\delta (C+ \frac{1}{(T-t_1)^2}+\lambda C)+\beta (t_1-s_1)+(t_1-t_0) \right]\\
&+H_*\left(t_1,x_1, \frac{1}{\lambda^2}\left(\delta D_x \psi(x_1)+\lambda\delta \epsilon D_x \psi(x_1-x_0)+\alpha(x_1-y_1)\right)\right)\leq 0,
\end{split}
\end{equation*}
which can be rewritten  as
\begin{equation}\label{eq:Dsub-solutionw'}
\begin{split}
&\quad
\delta (C+\frac{1}{T^2}+\lambda C)+\beta (t_1-s_1)+(t_1-t_0) \\
&\qquad \qquad +\lambda^2H_*\left(t_1,x_1, \frac{1}{\lambda}\left(\delta\epsilon D_x \psi(x_1-x_0)+ \hat{q}\right)\right)\leq 0,
\end{split}
\end{equation}
where $\hat q = \frac{1}{\lambda}\left(\delta D_x \psi(x_1)+\alpha(x_1-y_1)\right)$.
In the view of $D\psi(x_1-x_0)=\frac{x_1-x_0}{|x_1-x_0|^2+1}$, we may apply the hypothesis \rm{(A4)} to get
\begin{align*}
&\quad - \delta (C+\frac{1}{T^2}+\lambda C)-\beta (t_1-s_1)-(t_1-t_0) \\
& \geq \lambda \left[ H_*\left(t_1,x_1, \hat q \right)   - \delta\epsilon \bar C(\lambda,x_0) \right] \\
& \geq  \lambda H_*\left(t_1,x_1, \frac{1}{\lambda}\left(\delta D_x \psi(x_1)+\alpha(x_1-y_1)\right) \right)   - \lambda\delta  C,
\end{align*}
where we used $\epsilon \bar C(\lambda,x_0) \leq \bar C(\lambda,0) = C$ (due to \eqref{eq:ssss}) in the last inequality.
Applying the hypothesis {\rm(A4)} once more, we have
\begin{align*}
&\quad - \delta (C+\frac{1}{T^2}+\lambda C)-\beta (t_1-s_1)-(t_1-t_0) \\
&\geq \left[H_*\left(t_1,x_1, \alpha(x_1-y_1) \right)  - \delta C\right]   - \lambda\delta  C\\
&\geq H_*\left(t_1,x_1, \alpha(x_1-y_1) \right)    - \delta C - \lambda\delta  C,
\end{align*}
and hence
\begin{equation}\label{eq:Dsub-solutionw''}
\frac{\delta}{T^2} +\beta (t_1-s_1)+(t_1-t_0)+H_*(t_1,x_1, \alpha(x_1-y_1))\leq 0.
\end{equation}

In the same way, $(s_1,y_1)$ is a interior minimum point of the function  $\overline w(s,y)-\psi(s,y)$ with
\begin{equation*}
\begin{split}
\psi(s,y)=&\lambda^2 \underline w(t_1,x_1)-
\delta(\psi(x_1)+Ct_1+\frac{1}{T-t})-\lambda\delta(\psi(x_1-x_0)+Ct_1)\\
&-\frac{\alpha}{2}|x_1-y|^2-\frac{\beta}{2}|t_1-s|^2)-\frac{1}{2}|t_1-t_0|^2,
\end{split}
\end{equation*}
 whence
\begin{align}\label{eq:Dsuper-solutionw}
 \beta(t_1-s_1)+H^*(s_1,y_1,\alpha (x_1-y_1))\geq 0.
\end{align}
Subtracting \eqref{eq:Dsub-solutionw''} from \eqref{eq:Dsuper-solutionw},  we obtain \eqref {eq:maini} as claimed.

By Step 5 \rm{(iv)}, we have $(t_1,x_1)\to (t_0,x_0)$ and $(s_1,y_1)\to (t_0,x_0)$ as $\min\{\alpha,\beta\}\to \infty$.
On the one hand, if $(t_0,x_0)\notin \Gamma$, then there exists $\alpha_1>0$ such that  $(t_1,x_1)$ and $(s_1,y_1)$ enter the $(\delta_0/2)$-neighborhood of $(t_0,x_0)$ whenever $\min\{\alpha,\beta\} \geq \alpha_1$. Now, fix $\alpha$ and let $\beta \to \infty$, then after passing to a sequence, we have
$$
t_1, s_1 \to \bar t, \quad x_1 \to \bar x, \quad y_1 \to \bar y.
$$
Furthermore, by Step 5, we have
\begin{equation}\label{eq:txbar}
\bar t \to t_0,\quad \bar x \to x_0, \quad \bar y \to x_0,\quad \text{ and }\quad  {\alpha}|\bar x - \bar y|^2  \to 0 \quad \text{ as }\quad \alpha \to \infty.
\end{equation}
Hence, we deduce from \eqref{eq:maini} and the hypothesis \rm{(A2)}  that
\begin{align*}
\frac{\delta}{T^2} &\leq H^*(\bar t,\bar y,\alpha (\bar x - \bar y))-H_*(\bar t,\bar x, \alpha (\bar x - \bar y))+|\bar t-t_0|\\
&\leq \omega_R\left( \alpha |\bar x - \bar y|^2 + \frac{1}{\alpha}\right) + o(1),
\end{align*}
from which we derive a contradiction for large enough $\alpha$.
This proves Theorem \ref{thm:D} in case $(t_0,x_0) \in \Omega \setminus \Gamma$.

On the other hand, $(t_0,x_0)\in \Gamma$. Let $\delta_0$ and the unit vector $(h_0,k_0) \in \mathbb{R} \times \mathbb{R}^N$ be given by the hypothesis {\rm(A3)}. Define
\begin{equation}\label{eq:Psi'}
\begin{split}
\tilde{\Psi}_{\alpha,\beta}(t,x,s,y) = &\lambda^2 \underline w(t,x) -\overline w(s-\alpha^{-1/2}h_0,y-\alpha^{-1/2}k_0) \\
&-\delta(\psi (x)+Ct+\frac{1}{T-t})-\lambda\delta(\epsilon \psi (x-x_0)+Ct)\\
&-\frac{\alpha}{2}|x-y|^2- \frac{\beta}{2}|t-s|^2-\frac{1}{2}|t-t_0|^2.\end{split}
\end{equation}
By repeating Steps 5 and 6, we can again obtain
\begin{equation}\label{eq:sss}
\frac{\delta}{T^2} \leq H^*(\bar t -\alpha^{-1/2}h_0,\bar y -\alpha^{-1/2}k_0,\alpha (\bar x - \bar y))-H_*(\bar t, \bar x, \alpha (\bar x - \bar y))+|\bar t-t_0|
\end{equation}
for some  $\bar t, \bar x, \bar y$ such that for all $\alpha$ large, $(\bar t, \bar x), (\bar t, \bar y)$ enter the $(\delta_0/2)$-neighborhood of $(t_0,x_0)$ and \eqref{eq:txbar} holds. Furthermore, by verifying that
\begin{align*}
\frac{((\bar t - \alpha^{-1/2}h_0) - \bar t, \bar y - \alpha^{-1/2}k_0 - \bar x)}{\|((\bar t - \alpha^{-1/2}h_0) - \bar t, \bar y - \alpha^{-1/2}k_0 - \bar x)\|}&= -\frac{(\alpha^{-1/2}h_0, \bar x - \bar y + \alpha^{-1/2}k_0 )}{\|(\alpha^{-1/2}h_0, \bar x - \bar y + \alpha^{-1/2}k_0 )\|}\\
&=  -\frac{(h_0, \sqrt{\alpha}(\bar x - \bar y) + k_0 )}{\|(h_0,\sqrt{\alpha}(\bar x - \bar y) + k_0 )\|}\\
& \to (h_0,k_0) \quad \text{ as }\alpha \to \infty,
\end{align*}
we may apply hypothesis {\rm(A3)} to inequality \eqref{eq:sss} to get
\begin{align*} 
\frac{\delta}{T^2} &\leq \omega_R\left(\Big[\alpha^{-1/2}(h_0 + |k_0|) + |\bar x - \bar y|\Big](1+\alpha |\bar x - \bar y|)\right) + |\bar t- t_0| \\
&\leq \omega_R\left(|\bar x - \bar y| + \alpha |\bar x - \bar y|^2 + C_0 ( \sqrt\alpha|\bar x - \bar y| +\frac{1}{\sqrt{\alpha}})\right) + |\bar t - t_0|. 
\end{align*}
Letting $\alpha \to \infty$,  we can similarly obtain a contradiction.
\end{proof}

A direct consequence  of Theorem \ref{thm:D} is the following uniqueness result.
\begin{corollary}\label{cor:uniquD}
Assume that the Hamiltonian $H$ satisfies the hypotheses {\rm{(A1)}}-{\rm{(A4)}}. Suppose that \eqref{eq:D1} has a continuous viscosity solution $w$. Then, $w$ is the unique viscosity solution of \eqref{eq:D1} in the class of all (continuous) viscosity solution.
\end{corollary}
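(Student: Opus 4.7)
The strategy is a standard double application of the comparison principle in Theorem~\ref{thm:D}. Let $w$ denote the prescribed continuous viscosity solution of \eqref{eq:D1}, with parabolic boundary data $w|_{\partial_p\Omega}=\varphi$, and suppose $\tilde w$ is any other viscosity solution with the same boundary data. By the definition given just after \eqref{eq:D1}, $\tilde w$ is simultaneously a lsc viscosity super-solution and a usc viscosity sub-solution, hence continuous on $\Omega$ (and, we are assuming, agreeing with $w$ on $\partial_p\Omega$).

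First I would apply Theorem~\ref{thm:D} with $\overline w:=w$ (which is a super-solution of \eqref{eq:D1}) and $\underline w:=\tilde w$ (which is a sub-solution of \eqref{eq:D1}). Since the hypotheses \textrm{(A1)}--\textrm{(A4)} are assumed on $H$, and since $\overline w=\underline w$ on $\partial_p\Omega$, Theorem~\ref{thm:D} yields $w\geq \tilde w$ throughout $\Omega$. Then I would swap the roles, taking $\overline w:=\tilde w$ (super-solution) and $\underline w:=w$ (sub-solution), again with matching boundary data, to obtain the reverse inequality $\tilde w\geq w$ in $\Omega$. Combining these gives $\tilde w\equiv w$, which is exactly the claimed uniqueness statement.

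There is essentially no obstacle here beyond bookkeeping: the entire content is packaged inside Theorem~\ref{thm:D}. The only two points worth a line of commentary are (i) verifying that the definition of ``viscosity solution'' used in the excerpt really does imply that any such solution is simultaneously a valid $\underline w$ and a valid $\overline w$ for the comparison theorem (this is immediate from the definition, which requires both the super-solution inequality against lsc test envelopes $H^*$ and the sub-solution inequality against usc test envelopes $H_*$); and (ii) noting that the corollary implicitly presupposes a common set of boundary values on $\partial_p\Omega$, so the phrase ``unique viscosity solution'' should be read as uniqueness within the class of viscosity solutions sharing the boundary trace of $w$.
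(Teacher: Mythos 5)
Your proof is correct and matches what the paper intends: the paper states the corollary as "a direct consequence" of Theorem~\ref{thm:D} without writing out an argument, and the argument you give — applying the comparison theorem in both directions with matching boundary data to conclude $w \leq \tilde w$ and $w \geq \tilde w$ — is precisely that standard double application. Your remark that uniqueness is of course understood within the class of solutions sharing $w$'s boundary trace on $\partial_p\Omega$ is a sensible clarification and consistent with the hypothesis of Theorem~\ref{thm:D}.
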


\section{Two useful lemmas from \cite{LLL2019}}\label{SL}
In this section, we present two lemmas from  \cite{LLL2019}, which are used in this paper.
The first result is used to prove Propositions \ref{prop:1} and Corollary \ref{cor:underlinec1}, Lemma \ref{lem:tildeuv} and Proof of Theorem \ref{thm:1-2}.

\begin{lemma}[{\cite[Lemma 2.3]{LLL2019}}]\label{lem:entire1}
Let $-\infty\leq\underline c<\overline c\leq\infty$, and
let $(u,v)$ be a solution of \eqref{eq:1-1} in $\{(t,x): \underline{c} t \leq x \leq \overline{c}t\}$.
\begin{itemize}
\item[{\rm(a)}]If
$\displaystyle \liminf_{t \to \infty} \inf_{(\underline c + \eta) t < x < (\overline c - \eta)t} v(t,x) >0$ for each $0<\eta < (\overline{c}-\underline{c})/2$, then
 $$\displaystyle \limsup_{t \to \infty} \sup_{(\underline c +\eta) t < x < (\overline c - \eta)t}u(t,x) \leq k_1, \quad \liminf_{t \to \infty} \inf_{(\underline c + \eta) t < x < (\overline c - \eta)t}v(t,x) \geq k_2,$$
 for each $0<\eta< (\overline{c}-\underline{c})/2$;

\item[{\rm(b)}] If $\displaystyle \lim_{t \to \infty} \sup_{(\underline c + \eta) t < x < (\overline c - \eta) t} v(t,x)=0$ and $\displaystyle \liminf_{t \to \infty} \inf_{(\underline c +\eta) t < x < (\overline c - \eta)t} u(t,x) >0$ for each $0<\eta < (\overline{c}-\underline{c})/2$, then
    $$\displaystyle \lim_{t \to \infty} \sup_{(\underline c + \eta) t < x < (\overline c -\eta)t}|u(t,x) -1|=0, \quad \text{ for each }0 < \eta < (\overline{c}-\underline{c})/2;$$

\item[{\rm(c)}]
If
 $\displaystyle \liminf_{t \to \infty} \inf_{(\underline c + \eta) t < x < (\overline c - \eta)t} u(t,x) >0$ for each $0 < \eta< (\overline{c}-\underline{c})/2$, then
 $$\displaystyle \liminf_{t \to \infty} \inf_{(\underline c + \eta) t < x < (\overline c - \eta)t}u(t,x) \geq k_1, \quad \limsup_{t \to \infty} \sup_{(\underline c + \eta) t < x < (\overline c - \eta)t}v(t,x) \leq k_2
 $$
 for each $0<\eta < (\overline{c}-\underline{c})/2$;
 \item[{\rm(d)}] If $\displaystyle \lim_{t \to \infty} \sup_{(\underline c + \eta) t < x < (\overline c - \eta)t} u(t,x)=0$ and $\displaystyle \liminf_{t \to \infty} \inf_{(\underline c +\eta) t < x < (\overline c - \eta)t} v(t,x) >0$  for each $0 <\eta < (\overline{c}-\underline{c})/2$, then
     $$\displaystyle \lim_{t \to \infty} \sup_{(\underline c + \eta) t < x < (\overline c - \eta)t }|v(t,x) -1|=0, \quad \text{ for each }0 < \eta < (\overline{c}-\underline{c})/2.$$
\end{itemize}
\end{lemma}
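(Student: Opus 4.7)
By exchanging the roles of $(u,v)$ together with the parameters $(a,b)$ and the diffusion constants, assertions (c) and (d) follow from the same arguments as (a) and (b), so my plan is to focus on the latter two. The strategy exploits two ingredients: the global attractiveness of the coexistence equilibrium $(k_1,k_2)$ in the weak competition regime $a,b \in (0,1)$, realized here via a monotone iteration scheme, combined with a scalar Fisher--KPP comparison principle valid in cone-shaped moving-frame domains.

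For (a), the hypothesis lets me assume $v \geq \delta > 0$ on any slightly shrunk cone for $t$ large. Since $-av \leq -a\delta$, the function $u$ is a sub-solution of the scalar logistic $\partial_t U - \partial_{xx} U = U(1 - U - a\delta)$, whose stable equilibrium is $1 - a\delta$. A cone-version of the classical KPP spreading result would then yield $\limsup_{t\to\infty} \sup_{\rm cone} u \leq 1 - a\delta$ on a further shrunk cone. Feeding this new upper bound into the $v$-equation, $v$ becomes a super-solution of $\partial_t V - d\partial_{xx} V = rV(1 - b(1-a\delta) - V)$, giving $\liminf v \geq 1 - b + ab\delta$. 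Iterating the alternation between upper-bounding $u$ and lower-bounding $v$ produces sequences $\mu_n \searrow k_1$ and $\delta_n \nearrow k_2$ governed by the recursion $\delta_{n+1} = 1 - b + ab\delta_n$, which is a contraction of rate $ab<1$ with unique fixed point $k_2 = (1-b)/(1-ab)$. This proves (a). Part (b) is analogous but simpler: once $v \to 0$ uniformly on slightly shrunk cones, the $u$-equation is an asymptotic perturbation of the scalar Fisher--KPP $\partial_t u - \partial_{xx} u = u(1-u)$, and the positivity assumption $\liminf u > 0$ combined with the same cone-KPP result forces $u \to 1$ locally uniformly.

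The main obstacle is establishing the ``scalar KPP on a cone'' comparison, which supplies the base step of the iteration. Precisely, I need: if $w \in [0,M]$ is a bounded sub-solution of $\partial_t w - D\partial_{xx} w \leq w(\alpha - \beta w)$ on $\{(t,x): \underline{c}\, t < x < \overline{c}\,t\}$, then for every $\eta' > 0$, $\limsup_{t \to \infty} \sup_{(\underline{c}+\eta')t < x < (\overline{c}-\eta')t} w(t,x) \leq \alpha/\beta$. I would tackle this in one of two ways: (i) construct an explicit super-solution of the form $\alpha/\beta + C_1 e^{-\lambda_1(x-(\underline{c}+\eta'/2)t)} + C_2 e^{\lambda_2(x-(\overline{c}-\eta'/2)t)}$ with the exponential rates $\lambda_i$ tuned so that the linearized equation is satisfied on the shifted lateral boundaries of the cone; or (ii) argue by compactness and contradiction, translating along a violating subsequence $(t_n,x_n)$ with $x_n/t_n \in (\underline{c}+\eta', \overline{c}-\eta')$, and using interior parabolic estimates to pass to an entire bounded sub-solution of the scalar logistic on $\mathbb{R}\times\mathbb{R}$, then invoking the Liouville-type rigidity $w \leq \alpha/\beta$ for bounded entire sub-solutions. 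Approach (i) is more quantitative and mirrors the ``stacked-fronts'' barrier constructions used earlier in the paper, while (ii) is more flexible and suffices for the qualitative bounds needed here.
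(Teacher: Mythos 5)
Your proposal is correct but takes a genuinely different route from the paper. The paper proves part (c) (and the others analogously) by a single compactness-and-contradiction argument applied to the full competitive system: if the conclusion fails along a sequence $(t_n,x_n)$ with $x_n/t_n \to c \in (\underline{c},\overline{c})$, translating by $(t_n,x_n)$ and passing to a locally uniform limit gives an entire solution $(\hat u,\hat v)$ of the competition system on $\mathbb{R}\times\mathbb{R}$ satisfying $\hat u \geq \delta>0$ and $\hat v \leq 1$; comparing in the competitive order with the spatially homogeneous Lotka--Volterra ODE orbit started from $(\delta,1)$ at time $-T$, which converges to $(k_1,k_2)$ by global stability of the coexistence state when $0<a,b<1$, and letting $T\to\infty$ forces $\hat u(0,0)\geq k_1$ and $\hat v(0,0)\leq k_2$, a contradiction. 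Your scheme replaces the one-shot ODE convergence with an explicit monotone iteration of scalar cone-comparisons, and the recursion $\delta_{n+1}=1-b+ab\delta_n$ is precisely a discrete-time realization of that same global attractiveness, so the two proofs are morally close. The paper's route buys economy (one compactness argument for the coupled system, no separate cone lemma to prove), while yours buys an explicit contraction rate $ab<1$ and stays within scalar technology. Two small cautions if you pursue your version: you state only the sub-solution (upper-bound) direction of your cone-KPP lemma, but the $v$-steps also need the super-solution (lower-bound) direction, namely that a bounded super-solution of $w_t - Dw_{xx} \geq w(\alpha-\beta w)$ which stays $\geq \delta_0>0$ throughout the cone satisfies $\liminf w \geq \alpha/\beta$ on slightly smaller cones; and your option (ii) for establishing that cone lemma is itself just a scalar version of the paper's compactness-plus-Liouville argument, so running it once for the system, as the paper does, is the shorter path.
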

\begin{proof}
We only prove  {\rm(c)} and the other assertions  follow from the similar arguments.
 Suppose {\rm(c)} is false, then there exists $(t_n,x_n)$ such that
\begin{equation*}
    \begin{array}{c}
        c_n:= \frac{x_n}{t_n} \to c \in (\underline c, \overline c) \,\text{ and }\, \lim\limits_{n\to\infty}u(t_n,x_n)<k_1 \text{ or }  \lim\limits_{n\to\infty}v(t_n,x_n)>k_2.
     \end{array}
\end{equation*}
Define $(u_n,v_n)(t,x):= (u,v)(t_n + t, x_n + x)$. We  pass to the limit so that $(u_n,v_n)$ converges in $C_{loc}(\mathbb{R} \times \mathbb{R})$ to an entire solution
$(\hat u, \hat v)$ of \eqref{eq:1-1}. And there exists $\delta >0$ such that $ (\hat{u}, \hat{v})(t,x) \succeq (\delta, 1)$ for $(t,x) \in \mathbb{R}^2$. Let $(\underline{U}, \overline{V})$ be the solutions of the Lotka-Volterra system of ODEs
$$
U_t = U(1-U - aV), \quad V_t = rV(1-bU-V),
$$
with initial data $(U(0), V(0)) = (\delta,1)$, so that $(\underline{U}, \overline{V})(\infty) = (k_1,k_2)$.
By comparison in the time interval $[-T,0]$, we reveal that for each $T>0$,
$$
 (\hat u,\hat v)(t,x)\succeq (\underline{U}, \overline{V})(t+T) \, \text{ for }(t,x) \in [-T,0] \times \mathbb{R},
$$
so that we in particular have, for every $T>0$,
$$
 (\hat u,\hat v)(0,0)\succeq (\underline{U}, \overline{V})(T).
$$
Letting $T \to \infty$, we obtain $(\hat u,\hat v)(0,0) \succeq (k_1,k_2)$.
In particular, we deduce that
$$
\lim_{n \to \infty} (u,v)(t_n,x_n) = \lim_{n \to \infty} (u_n,v_n)(0,0) = (\hat u, \hat v)(0,0)\succeq (k_1,k_2).
$$
This is a contradiction and  proves {\rm(c)}.
\end{proof}

The following result is applied to prove Proposition \ref{prop:overlinec2} and Proposition \ref{prop:underlinec3}. 
\begin{lemma}[{\cite[Lemma 2.4]{LLL2019}}]\label{lem:appen1} 
Let $\hat{c}>0$, $t_0>0$, and  $(\tilde u,\tilde v)$ be a solution of
\begin{equation}\label{eq:A2}
\left\{
\begin{array}{ll}
\medskip
\partial_t \tilde u-\partial_{xx}\tilde u=\tilde u(1-\tilde u-a\tilde v),& 0\leq x\leq\hat ct, t>t_0,\\
\medskip
\partial _t \tilde v-d\partial_{xx}\tilde v=r \tilde v(1-b\tilde u-\tilde v),&  0\leq x\leq\hat ct, t>t_0,\\
\tilde u(t_0,x)=\tilde u_0(x), \tilde v(t_0,x)=\tilde v_0(x), &  0\leq x\leq\hat ct_0.
\end{array}
\right .
\end{equation}
\begin{itemize}
\item[{\rm (a)}] If $\hat{c}>2$ and there exists $\hat \mu>0$ such that 
\begin{itemize}
\item[{\rm(i)}] $\lim\limits_{t\to\infty}(\tilde u,\tilde v)(t,0)=(k_1,k_2)$ and $\lim\limits_{t\to\infty}(\tilde u,\tilde v)(t,\hat ct)=(0,1)$,
\item[{\rm(ii)}]  $\lim_{t\to\infty} e^{\mu t}\tilde u(t,\hat ct)=0~$ for each $\mu \in [0,\hat \mu),$

\end{itemize}
then 
$$
\lim_{t\to\infty} \sup_{ct<x\leq \hat{c}t} \tilde u(t,x) = 0   \quad \text{ for each }c >  c_{\hat{c},\hat\mu},
$$
where
\begin{equation*}
 c_{\hat{c},\hat\mu}=\left\{
\begin{array}{ll}
\medskip
c_{\textup{LLW}},& \text{if } \hat\mu\geq   \lambda_{\textup{LLW}} (\hat{c} - c_{\textup{LLW}}),\\
\hat c-\frac{2\hat\mu}{\hat c-\sqrt{\hat c^2-4(\hat\mu+1-a)}},& \text{if }0< \hat\mu< \lambda_{\textup{LLW}}(\hat{c} - c_{\textup{LLW}}); \\
\end{array}
\right .
\end{equation*}
\item[{\rm (b)}] If $\hat{c} > 2\sqrt{dr}$ and there exists $\hat\mu >0$ such that

\begin{itemize}

\item[{\rm(i)}] $\lim\limits_{t\to\infty}(\tilde u,\tilde v)(t,0)=(k_1,k_2)$, and $\lim\limits_{t\to\infty}(\tilde u,\tilde v)(t,\hat ct)=(1,0)$,

\item[{\rm(ii)}]  $\lim_{t\to\infty} e^{\mu t}\tilde v(t,\hat ct)=0~$ for each $\mu \in [0, \hat \mu),$

\end{itemize}
then 
$$
\lim_{t\to\infty} \sup_{ct<x\leq \hat{c}t} \tilde v(t,x) = 0   \quad \text{ for each }c >\tilde{c}_{\hat{c},\hat\mu},
$$
where
\begin{equation*}
\tilde{c}_{\hat{c},\hat\mu}= \left\{
\begin{array}{ll}
\medskip
\tilde{c}_{\textup{LLW}},& \text{ if } \hat\mu\geq \tilde{\lambda}_{\rm LLW} (\hat{c} - \tilde{c}_{\textup{LLW}}),\\
\hat c-\frac{2d\hat\mu}{\hat c-\sqrt{\hat c^2-4d[\hat\mu+r(1-b)]}},&  \text{ if } 0<\hat\mu<\tilde{\lambda}_{\rm LLW} (\hat{c} - \tilde{c}_{\textup{LLW}}).
%
\end{array}
\right.
\end{equation*}
\end{itemize}
Here  ${c}_{\textup{LLW}}, \tilde{c}_{\textup{LLW}}$ are given in Theorem \ref{thm:LLW} and Remark \ref{rmk:LLW}, and
\begin{equation}\label{eq:LLL}
\lambda_{\rm LLW}=\frac{{c}_{\textup{LLW}}-\sqrt{{c}_{\textup{LLW}}^2-4(1-a)}}{2}, \quad  \tilde{\lambda}_{\rm LLW}=\frac{\tilde{c}_{\textup{LLW}}-\sqrt{\tilde{c}_{\textup{LLW}}^2-4dr(1-b)}}{2d}.\end{equation}
\end{lemma}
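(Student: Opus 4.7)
The plan is to prove both parts by comparison with an explicit exponential super-solution for the slower species on the moving strip $\{0 \le x \le \hat c t\}$, using the boundary behavior at $x = 0$ and $x = \hat c t$. I will describe part (a) since part (b) is entirely analogous after swapping the roles of $u$ and $v$ and rescaling time and space to absorb the factors $d$ and $r$. The core idea is that in the bulk of the domain $\tilde v$ saturates to the value prescribed by the right-hand boundary, so that $\tilde u$ approximately satisfies a linear KPP equation, and the minimal speed compatible with both the left and the right boundary data is exactly $c_{\hat c,\hat\mu}$.

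First I would use hypothesis (i) together with Lemma~\ref{lem:entire1}(a) (applied in the wedge near $x = \hat c t$) to show that for every $\delta > 0$ there is $T_\delta$ such that $\tilde v \ge 1 - \delta$ on $\{t \ge T_\delta,\ 0 \le x \le \hat c t\}$. Consequently $\tilde u$ is a sub-solution of the linear equation
\begin{equation*}
\partial_t \tilde u - \partial_{xx}\tilde u \le \bigl(1 - a(1-\delta)\bigr)\tilde u.
\end{equation*}
Next, for parameters $\lambda > 0$ and $A > 0$ to be chosen, I would introduce the candidate super-solution
\begin{equation*}
\overline u(t,x) := \min\bigl\{\,k_1 + \delta,\ A\, e^{-\lambda x + (\lambda^2 + 1 - a(1-\delta))\,t}\bigr\},
\end{equation*}
which is, away from the crease, a classical super-solution of the linearized equation; at the crease it is a viscosity super-solution automatically. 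On the left boundary $x=0$ we have $\tilde u(t,0) \to k_1$ while $\overline u(t,0) = k_1+\delta$ eventually; on the initial slice $t = t_0'$ (chosen large) one absorbs the value of $\tilde u$ by taking $A$ large; on the right boundary $x = \hat c t$ we have, by hypothesis (ii), $\tilde u(t,\hat c t) \le e^{-(\hat\mu - o(1))t}$, so $\overline u \ge \tilde u$ there as long as
\begin{equation*}
\lambda \hat c - \lambda^2 - \bigl(1 - a(1-\delta)\bigr) \ge \hat\mu - o(1),
\end{equation*}
i.e., $\lambda$ lies outside the interval between the two roots of $\lambda^2 - \hat c\lambda + \hat\mu + 1 - a(1-\delta) = 0$ (assumed real, which requires $\hat c^2 \ge 4(\hat\mu + 1 - a)$; otherwise the boundary constraint is vacuous and one instead uses any LLW traveling wave).

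The parameter $\lambda$ is then chosen to minimize the propagation speed $c(\lambda) = \lambda + \bigl(1 - a(1-\delta)\bigr)/\lambda$ of $\overline u$ subject to the above admissibility constraint, yielding two regimes. If $\hat\mu \ge \lambda_{\mathrm{LLW}}(\hat c - c_{\mathrm{LLW}})$, then $\lambda_{\mathrm{LLW}}$ lies in the admissible set as $\delta \to 0$, and one compares instead with a shifted copy of the LLW traveling wave of Theorem~\ref{thm:LLW} (which is a super-solution of the full nonlinear system, not just the linearization) to conclude $\overline c_2 \le c_{\mathrm{LLW}}$. If $\hat\mu < \lambda_{\mathrm{LLW}}(\hat c - c_{\mathrm{LLW}})$, I would take $\lambda_\delta$ to be the smaller root, $\lambda_\delta = \tfrac12\bigl[\hat c - \sqrt{\hat c^2 - 4(\hat\mu + 1 - a(1-\delta))}\bigr]$, and use the algebraic identity $c(\lambda_\delta) = \hat c - \hat\mu/\lambda_\delta$ to recover the explicit formula in the statement. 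Comparison on the moving strip, followed by passage $\delta \to 0$ using continuity of the dispersion relation, then yields $\sup_{ct < x \le \hat c t}\tilde u(t,x) \to 0$ for every $c > c_{\hat c,\hat\mu}$.

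The main obstacle is the boundary matching on the right: the $o(1)$ loss in $\tilde u(t,\hat c t) \le e^{-(\hat\mu - o(1))t}$ must be absorbed without degrading the exponent of $\overline u$, which forces $\lambda_\delta$ to be taken strictly larger (i.e., the super-solution to decay slightly faster) than the limiting value, with the limit recovered only after sending $\delta \to 0$. A secondary point, already handled by the min-cap in the definition of $\overline u$, is that the exponential alone does not dominate $\tilde u$ near $x=0$ where $\tilde u \to k_1$; this is why the cap at $k_1 + \delta$ is needed, and one must verify that the crease $\{x : A e^{-\lambda x + (\lambda^2 + 1 - a(1-\delta))t} = k_1 + \delta\}$ travels at a speed strictly less than $\hat c$, which follows from $c(\lambda_\delta) < \hat c$.
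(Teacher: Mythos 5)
The overall strategy — an exponential super‑solution capped at $k_1+\delta$ for the pulled regime, and a shifted LLW traveling wave for the other — is the right intuition and does correctly produce the two regimes and the formula for $c_{\hat c,\hat\mu}$. But the verification that your candidate $\overline u$ is in fact a super‑solution has a genuine gap.

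You claim, citing Lemma~\ref{lem:entire1}(a), that $\tilde v\ge 1-\delta$ on the whole strip $\{0\le x\le\hat c t,\ t\ge T_\delta\}$. This is false: at $x=0$ hypothesis (i) only gives $\tilde v(t,0)\to k_2<1$, and Lemma~\ref{lem:entire1}(a) itself only yields $\liminf \tilde v\ge k_2$, not $\tilde v\to 1$. The stronger statement $\tilde v\to 1$ in a wedge is part (d) of that lemma and requires as an \emph{input} that $\tilde u\to 0$ in that wedge — exactly what you are trying to prove. This matters because the exponential piece of $\overline u$, call it $\psi=Ae^{-\lambda x+(\lambda^2+1-a(1-\delta))t}$, is a super‑solution of the full nonlinear $\tilde u$-equation precisely when $\psi+a\tilde v\ge a(1-\delta)$, i.e.\ $\tilde v\ge 1-\delta-\psi/a$. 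Where $\psi\ge a(1-k_2)=abk_1$ this is implied by $\tilde v\ge k_2-\delta$; where $\psi<abk_1$ it is not. The set $\{\psi<abk_1\}$ is $\{x>c(\lambda)t+\text{const}\}$, whereas the region where you a priori know $\tilde v\ge 1-\delta$ (from knowing $\tilde u\to 0$) is $\{x>(\overline c_2+\eta)t\}$ with $\overline c_2>c(\lambda)$ (otherwise there is nothing to prove). The intermediate wedge $\{c(\lambda)t+\text{const}<x<(\overline c_2+\eta)t\}$ grows linearly in $t$, and there the super‑solution inequality is not verified. A naive iteration (apply the argument, improve $\overline c_2$, repeat) does not close either: the admissibility of $\overline u$ requires $c(\lambda)\ge\overline c_2^{\text{(current)}}$, so each step gives $\overline c_2\le c(\lambda)\ge\overline c_2^{\text{(current)}}$ — no contraction. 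A secondary inaccuracy is your remark that the capped function is a super‑solution of ``the linearized equation'': the constant $k_1+\delta$ is a super‑solution only of the \emph{nonlinear} equation under the bound $\tilde v\ge k_2-\delta/a$ (it is in fact a sub‑solution of the linearization).

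To make the pulled‑regime case rigorous one must avoid appealing to unavailable information about $\tilde v$. One route is to construct a coupled pair $(\overline U,\underline V)$ — an exponential super‑solution $\overline U$ together with a profile $\underline V$ increasing from $k_2$ to $1$ which is a genuine sub‑solution of the $v$-equation with $u$ replaced by $\overline U$ — and invoke the competitive‑order comparison principle for the system on the moving strip; then the inequality $\tilde v\ge \underline V$ comes out of the comparison rather than being assumed. Another is to formulate the bound through the linearized problem with the spatio‑temporal coefficient $1-a\chi_{\{x<c_0t\}}$ and a well‑chosen $c_0$ (this is precisely what the Hamilton–Jacobi machinery in the main text, and in \cite{LLL2019}, is engineered to handle). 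As written, your construction leaves the super‑solution property unverified on a linearly growing region, and the proof does not go through.
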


\end{appendices}

\begin{center}

\end{center}

\end{document}